\newtheorem{theorem}{Theorem}
\newtheorem{lemma}[theorem]{Lemma}
\newtheorem{corollary}[theorem]{Corollary}
\newtheorem{proposition}[theorem]{Proposition}
\theoremstyle{definition}
\newtheorem{definition}{Definition}
\newtheorem{example}{Example}
\newcommand{\oracle}{\ensuremath{{\omega}}}
\newcommand{\esterr}{\ensuremath{\text{\sc err}}}
\newcommand{\crisk}{\ensuremath{{R}}}
\newcommand{\risk}{\ensuremath{{r}}}
\newcommand{\var}{\ensuremath{{\text{var}}}}
\newcommand{\cov}{\ensuremath{{\text{cov}}}}
\newcommand{\transpose}{\ensuremath{{\text{\sc T}}}}
\newcommand{\Gsw}{\ensuremath{G_{\text{\rm sw}}}}
\newcommand{\Gmmvar}{\ensuremath{G_{\text{\rm mm-var}}}}
\newcommand{\Gmmfix}{\ensuremath{G_{\text{\rm mm-fix}}}}
\newcommand{\dif}{\,\text{d}}
\newcommand{\gmm}{\ensuremath{g_{\text{\rm mm}}}}
\newcommand{\model}{\ensuremath{\mathcal{M}}}
\newcommand{\modelclass}{\ensuremath{\mathcal{M}^*}}
\newcommand{\samplespace}{\ensuremath{\mathcal{X}}}
\newcommand{\mltheta}{\ensuremath{\hat{\theta}}}
\newcommand{\Union}{\ensuremath{\bigcup}}
\newcommand{\Intersection}{\ensuremath{\bigcap}}
\newcommand{\union}{\ensuremath{\cup}}
\newcommand{\intersection}{\ensuremath{\cap}}
\newcommand{\naturals}{\ensuremath{\mathbb{N}}}
\newcommand{\posints}{\ensuremath{\mathbb{Z}^+}}
\newcommand{\reals}{\ensuremath{\mathbb{R}}}
\newcommand{\basisfamily}{\ensuremath{\mathcal{Q}}}
\newcommand{\ceil}[1]{\lceil#1\rceil}
\newcommand{\ind}{\ensuremath{{\bf 1}}}             % indicator function
\newcommand{\m}[1]{\ensuremath{\bar{#1}}}           % meta
\newcommand{\mbayes}[1]{\ensuremath{\bar{#1}^{\text{B}}}}         
\newcommand{\Pswitch}{\ensuremath{P_\text{\rm sw}}}
\newcommand{\Pmm}{\ensuremath{P_\text{\rm mm-fix}}}
\newcommand{\Pcesaro}{\ensuremath{P_\text{\rm Ces\`aro}}}
\newcommand{\Pcesaroswitch}{\ensuremath{P_\text{\rm Ces\`aro-sw}}}
\newcommand{\pswitch}{\ensuremath{p_\text{\rm sw}}}
\newcommand{\Pbayes}{\ensuremath{\bar{P}}} % caution: \(p|P)bayes also used
\newcommand{\pbayes}{\ensuremath{\bar{p}}} % in text for non-Bayesian estim.
\newcommand{\Pbma}{\ensuremath{P_\text{\rm bma}}}
\newcommand{\pbma}{\ensuremath{p_\text{\rm bma}}}
\newcommand{\switchpars}{\ensuremath{\mathbb{S}}}
\newcommand{\switchpar}{\ensuremath{\mathbf{s}}}
\newcommand{\switchprior}{\ensuremath{\pi}}
\newcommand{\pim}{\ensuremath{\switchprior_\textsc{\tiny{m}}}}
\newcommand{\pit}{\ensuremath{\switchprior_\textsc{\tiny{t}}}}
\newcommand{\pik}{\ensuremath{\switchprior_\textsc{\tiny{k}}}}
\renewcommand{\vec}[1]{\ensuremath{{\bf #1}}}
\newcommand{\commentout}[1]{}
\DeclareMathOperator{\argmin}{arg\,min}
\DeclareMathOperator{\argmax}{arg\,max}
\title{Catching Up Faster by Switching Sooner:\footnote{A preliminary
    version of a part of this paper appeared as \citep{ErvenGR07}.} \\
  {\em A Prequential Solution to the AIC-BIC Dilemma}}
\author{%
{Tim van Erven \qquad Peter Gr\"unwald \qquad Steven de Rooij}\\
Centrum voor Wiskunde en Informatica (CWI)\\
Kruislaan 413, P.O. Box 94079 \\
1090 GB Amsterdam, The Netherlands \\
\texttt{\{Tim.van.Erven,Peter.Grunwald,Steven.de.Rooij\}@cwi.nl}}
\begin{document}
\bibliographystyle{abbrvnat}

\maketitle

\begin{abstract}
  Bayesian model averaging, model selection and its approximations 
  such as BIC are generally
  statistically consistent, but sometimes achieve slower rates of
  convergence than other methods such as AIC and leave-one-out
  cross-validation. On the other hand, these other methods can be
  inconsistent. We identify the \emph{catch-up phenomenon} as a novel
  explanation for the slow convergence of Bayesian methods. Based on
  this analysis we define the switch distribution, a modification of
  the Bayesian marginal distribution. We show that, under broad conditions,
  model selection and prediction based on the switch distribution is
  both consistent and achieves optimal convergence rates, thereby
  resolving the AIC-BIC dilemma. The method is practical; we give an
  efficient implementation. The switch distribution has a data
  compression interpretation, and can thus be viewed as a
  ``prequential'' or MDL method; yet it is different from the MDL
  methods that are usually considered in the literature. We compare
  the switch distribution to Bayes factor model selection and
  leave-one-out cross-validation.
\end{abstract}

\section{Introduction: The Catch-Up Phenomenon}
\label{sec:introduction}
We consider inference based on a countable set of models
(sets of probability distributions), focusing on two tasks:
model selection and model averaging. In model selection tasks, the
goal is to select the model that best explains the given data. In
model averaging, the goal is to find the weighted combination of
models that leads to the best prediction of future data from
the same source.

An attractive property of some criteria for model selection is that
they are consistent under weak conditions, i.e.\ if the
true distribution $P^*$ is in one of the models, then the
$P^*$-probability that this model is selected goes to one as the
sample size increases.  BIC \citep{schwarz1978}, Bayes factor model
selection \citep{kass1995}, Minimum Description Length (MDL) model
%% Misschien ook Doob1949 als referentie voor Bayes consistency?
selection \citep{barron1998b} and prequential model validation
\citep{dawid1984} are examples of widely used model selection criteria
that are usually consistent. However, other model selection criteria
such as AIC \citep{akaike1974} and leave-one-out cross-validation
(LOO) \citep{stone1977}, while often inconsistent, do typically yield
better predictions. This is especially the case in nonparametric
settings of the following type: $P^*$ can be arbitrarily well-approximated by a
sequence of distributions in the (parametric) models under
consideration, but is not itself contained in any of these. In many
such cases, the predictive distribution converges to the true
distribution at the optimal rate for AIC and LOO
\citep{Shibata83,Li87}, whereas in general MDL, BIC, the Bayes factor
method and prequential validation only achieve the optimal rate to
within an $O(\log n)$ factor
\citep{rissanen1992,FosterG94,Yang99,grunwald2007}. In this paper we reconcile
these seemingly conflicting approaches \citep{Yang05a} by improving
the rate of convergence achieved in Bayesian model selection without
losing its consistency properties. First we provide an example to show
why Bayes sometimes converges too slowly.

\subsection{The Catch-Up Phenomenon}
Given priors on parametric  models $\model_1$, $\model_2$, $\ldots$ and parameters
therein, Bayesian inference associates each model $\model_k$ with the
marginal distribution $\m{p}_k$, given by
$$
  \pbayes_k(x^n) = \int_{\theta \in \Theta_k} 
    p_{\theta}(x^n) w(\theta)\dif\theta.
$$ 
obtained by averaging over the parameters according to the prior. In
Bayes factor model selection the preferred model is the one with maximum a posteriori
probability. By Bayes' rule
this is $\argmax_k \m{p}_k(x^n)w(k)$, where $w(k)$ denotes
the prior probability of $\model_k$.
We can further average over model indices, a process called
Bayesian Model Averaging (BMA). The resulting distribution $\pbma(x^n) =
\sum_k \m{p}_k(x^n) w(k)$ can be used for prediction.
%
%Given priors on models and parameters therein, Bayesian inference is
%based on the {posterior distribution} that is obtained by conditioning
%on observed outcomes.  In model selection the preferred model is the
%one with maximum a posteriori probability. In prediction the marginal
%distributions $\m{p}_1,\m{p}_2,\ldots$ (defined as in (\ref{eq:bayesint})
%below) are weighted according to the posterior, a process called
%Bayesian Model Averaging (BMA). We denote the resulting distribution
%$\pbma$.
%
In a sequential setting, the probability of a data sequence
$x^n:=x_1,\ldots,x_n$ under a distribution $p$ typically decreases
exponentially fast in $n$. It is therefore common to consider $-\log
p(x^n)$, which we call the \emph{code length} of $x^n$ achieved by
$p$. We take all logarithms to base $2$, allowing us to measure
code length in \emph{bits}. The name code length refers to the
correspondence between code length functions and probability
distributions based on the Kraft inequality, but one may also think of
the code length as the accumulated log loss that is incurred if we
sequentially predict the $x_i$ by conditioning on the past, i.e.\
using $p(\cdot | x^{i-1})$
\citep{barron1998b,grunwald2007,dawid1984,rissanen1984}. For BMA, we
have 
$$-\log \pbma(x^n) = 
- \log \prod_{i=1}^n \pbma(x_i \mid x^{i-1})
= \sum_{i=1}^n \left[ -\log \pbma(x_i \mid
x^{i-1}) \right].
$$\label{eq:bayesmargintro}Here the $i$th term represents the
loss incurred when predicting $x_i$ given $x^{i-1}$ using $\pbma(\cdot
| x^{i-1})$, which turns out to be equal to the posterior average:
$\pbma(x_i | x^{i-1}) = \sum_k \m{p}_k(x_i | x^{i-1}) w(k| x^{i-1})$.

Prediction using $\pbma$ has the advantage that the code length it
achieves on $x^n$ is close to the code length of $\m{p}_{\hat{k}}$, where
$\hat{k}$ is the best of the marginals $\m{p}_1, \m{p}_2, \ldots$, i.e.
$\hat{k}$ achieves $\min_{k} - \log \m{p}_k(x^n)$.  More precisely, given
a prior $w$ on model indices, the difference between $-\log
\pbma(x^n)=-\log(\sum_k \m{p}_k(x^n)w(k))$ and $-\log \m{p}_{\hat{k}}(x^n)$
must be in the range $[0,-\log w(\hat{k})]$, whatever data $x^n$ are
observed. Thus, using BMA for prediction is sensible if we are
satisfied with doing essentially as well as the best model under
consideration. However, it is often possible to combine
$\m{p}_1,\m{p}_2,\ldots$ into a distribution that achieves smaller code length
than $\m{p}_{\hat{k}}$! This is possible if the index $\hat{k}$ of the
best distribution \emph{changes with the sample size in a predictable
  way}. This is common in model selection, for example with nested
models, say $\model_1\subset\model_2$. In this case $\m{p}_1$ typically
predicts better at small sample sizes (roughly, because $\model_2$ has
more parameters that need to be learned than $\model_1$), while $\m{p}_2$
predicts better eventually.  Figure~\ref{fig:markovexample}
illustrates this phenomenon. 
\begin{figure}[t]
\begin{center}
  \includegraphics[width=0.54\columnwidth]{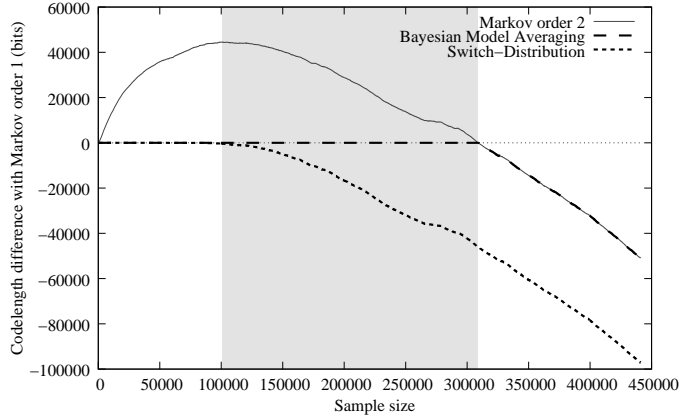}
  \caption{The Catch-up Phenomenon}
  \label{fig:markovexample}
\end{center}
\end{figure}
It shows the accumulated code length
difference $ - \log \m{p}_2(x^n) - ( - \log \m{p}_1(x^n))$ on ``The Picture of
Dorian Gray'' by Oscar Wilde, where $\m{p}_1$ and $\m{p}_2$ are the Bayesian
marginal distributions for the first-order and second-order Markov
chains, respectively, and each character in the book is an outcome.
We used uniform
(Dirichlet$(1,1,\ldots, 1)$) priors on the
model parameters (i.e., the ``transition probabilities'')
, but the same phenomenon occurs with other common priors , such as Jeffreys''. Clearly $\m{p}_1$ is better for about the first $100\,000$
outcomes, gaining a head start of approximately $40\,000$ bits.
Ideally we should predict the initial $100\,000$ outcomes using $\m{p}_1$
and the rest using $\m{p}_2$. However, $\pbma$ only starts to behave like
$\m{p}_2$ when it \emph{catches up} with $\m{p}_1$ at a sample size of about
$310\,000$, when the code length of $\m{p}_2$ drops below that of $\m{p}_1$.
Thus, in the shaded area $\pbma$ behaves like $\m{p}_1$ while $\m{p}_2$ is
making better predictions of those outcomes: since at $n=100\,000$,
$\m{p}_2$ is $40\,000$ bits behind, and at $n= 310\,000$, it has caught
up, in between it must have outperformed $\m{p}_1$ by $40\,000$ bits!

Note that the example models $\model_1$ and $\model_2$ are very crude;
for this particular application much better models are available. Thus
$\model_1$ and $\model_2$ serve as a simple illustration only (see the
discussion in Section~\ref{sec:unbelievabletruth}).  However, our
theorems, as well as experiments with nonparametric density estimation
on which we will report elsewhere, indicate that the same phenomenon
also occurs with more realistic models. In fact, the general pattern that first
one model is better and then another occurs widely, both on real-world
data and in theoretical settings. We argue that failure to take this
effect into account leads to the suboptimal rate of convergence
achieved by Bayes factor model selection and related methods.  We have
developed an alternative method to combine distributions $\m{p}_1$ and
$\m{p}_2$ into a single distribution $\pswitch$, which we call the
\emph{switch distribution}, defined in Section~\ref{sec:switch
  distribution}.  Figure~\ref{fig:markovexample} shows that $\pswitch$
behaves like $\m{p}_1$ initially, but in contrast to $\pbma$ it starts
to mimic $\m{p}_2$ \emph{almost immediately} after $\m{p}_2$ starts
making better predictions; it essentially does this \emph{no matter
  what sequence $x^n$ is actually observed}.  $\pswitch$ differs from
$\pbma$ in that it is based on a prior distribution on \emph{sequences
  of models} rather than simply a prior distribution on models. This
allows us to avoid the implicit assumption that there is one model
which is best at all sample sizes. After conditioning on past
observations, the posterior we obtain gives a better indication of
which model performs best \emph{at the current sample size}, thereby
achieving a faster rate of convergence. Indeed, the switch
distribution is very closely related to earlier algorithms for
\emph{tracking the best expert} developed in the universal prediction
literature; see also Section~\ref{sec:relevance}
\citep{HerbsterWarmuth1998,Vovk1999,volfwillems1998,MonteleoniJ04};
however, the applications we have in mind and the theorems we prove
are completely different.

\subsection{Organization}
The remainder of the paper is organized as follows (for the reader's
convenience, we have attached a table of contents at the end of the paper). In
Section~\ref{sec:switch distribution} we introduce our basic concepts
and notation, and we then define the switch distribution. While in the
example above, we switched between just two models, the general
definition allows switching between elements of any finite or
countably infinite set of models.  In Section~\ref{sec:consistency} we
show that model selection based on the switch distribution is
consistent (Theorem~\ref{thm:consistencyagain}). Then in
Section~\ref{sec:nonparametrica} we show that the switch distribution
achieves a rate of convergence that is never significantly worse than
that of Bayesian model averaging, and we show that, in contrast to
Bayesian model averaging, the switch distribution achieves the
\emph{worst-case optimal} rate of convergence when it is applied to
histogram density estimation. In Section~\ref{sec:nonparametricb} we
develop a number of tools that can be used to bound the rate of
convergence in Ces\`aro-mean in more general parametric and
nonparametric settings, which include histogram density estimation as
a special case. In Section~\ref{sec:exponential} and
Section~\ref{sec:linear} we apply these tools to show that the switch
distribution achieves minimax convergence rates in density estimation
based on exponential families and in some nonparametric linear
regression problems. In Section~\ref{sec:computation} we give a
practical algorithm that computes the switch distribution.
Theorem~\ref{thm:algo} of that section shows that the run-time for $k$
predictors is $\Theta(n\cdot k)$ time. In Sections~\ref{sec:relevance}
and Section~\ref{sec:ketchup} we put our work in a broader context and
explain how our results fit into the existing literature.
Specifically, Section~\ref{sec:aicbic} explains how our result can be
reconciled with a seemingly contradictory recent result of \citet{Yang05a}, and
Section~\ref{sec:unbelievabletruth} describes a strange
implication of the catch-up phenomenon for Bayes factor model
selection. The proofs of all theorems are in Appendix~\ref{sec:proofs}
(except the central results of Section~\ref{sec:nonparametricb}, which
are proved in the main text).
\section{The switch distribution for Model Selection and Prediction}
\label{sec:switch distribution}

\subsection{Preliminaries}
\label{sec:preliminaries}

Suppose $X^\infty = (X_1$, $X_2$, $\ldots)$ is a sequence of random
variables that take values in sample space $\samplespace \subseteq
\reals^d$ for some $d \in \posints = \{1,2,\ldots\}$. For $n \in
\naturals = \{0,1,2,\ldots\}$, let $x^n = (x_1$, $\ldots$, $x_n)$ denote
the first $n$ outcomes of $X^\infty$, such that $x^n$ takes values in
the product space $\samplespace^n = \samplespace_1 \times \cdots \times
\samplespace_n$. (We let $x^0$ denote the empty sequence.) 
%Let $\samplespace^* = \Union_{n=0}^\infty \samplespace^n$. 
For $m > n$, we
write $X_{n+1}^m$ for $(X_{n+1}$, $\ldots$, $X_m)$, where $m = \infty$
is allowed. We omit the subscript when $n=0$, writing $X^m$ rather than
$X^m_1$.

Any distribution $P(X^\infty)$ may be defined in terms of a sequential
\emph{prediction strategy} $p$ that predicts the next outcome at any
time $n \in \naturals$. To be precise: Given the previous outcomes
$x^n$ at time $n$, a prediction strategy should issue a conditional
density $p(X_{n+1}|x^n)$ with corresponding distribution
$P(X_{n+1}|x^n)$ for the next outcome $X_{n+1}$. Such sequential
prediction strategies are sometimes called \emph{prequential
  forecasting systems} \citep{dawid1984}. An instance is given in
Example~\ref{ex:bern} below. Whenever the existence of a `true'
distribution $P^*$ is assumed --- in other words, $X^\infty$ are
distributed according $P^*$ ---, we may think of any prediction
strategy $p$ as a procedure for estimating $P^*$, and in such cases,
we will often refer to $p$ an \emph{estimator}. For simplicity, we
assume throughout that the density $p(X_{n+1}| x^n)$ is taken relative
to either the usual Lebesgue measure (if $\samplespace$ is continuous)
or the counting measure (if $\samplespace$ is countable). In the
latter case $p(X_{n+1}| x^n)$ is a probability mass function.  It is
natural to define the joint density $p(x^m|x^n) = p(x_{n+1}|x^n)
\cdots p(x_m|x^{m-1})$ and let $P(X_{n+1}^\infty|x^n)$ be the unique
distribution on $\samplespace^\infty$ such that, for all $m > n$,
$p(X_{n+1}^m|x^n)$ is the density of its marginal distribution for
$X_{n+1}^m$. To ensure that $P(X_{n+1}^\infty|x^n)$ is well-defined
even if $\samplespace$ is continuous, we will only allow prediction
strategies satisfying the natural requirement that for any $k \in
\posints$ and any fixed measurable event $A_{k+1} \subseteq
\samplespace_{k+1}$ the probability $P(A_{k+1} | x^k)$ is a measurable
function of $x^k$. This requirement holds automatically if
$\samplespace$ is countable.

\subsection{Model Selection and Prediction}

In \emph{model selection} the goal is to choose an explanation for
observed data $x^n$ from a potentially infinite list of candidate models
$\model_1$, $\model_2$, $\ldots$ We consider \emph{parametric models},
which we define as sets $\{p_\theta:\theta\in\Theta\}$ of prediction
strategies $p_\theta$ that are indexed by elements of $\Theta \subseteq
\reals^d$, for some smallest possible $d \in \naturals$, the number of
degrees of freedom. A model is more commonly viewed as a set of
distributions, but since distributions can be viewed as prediction
strategies as explained above, we may think of a model as a set of
prediction strategies as well. Examples of model selection are histogram
density estimation \citep{rissanen1992} ($d$ is the number of bins minus
1), regression based on a set of basis functions such as polynomials
($d$ is the number of coefficients of the polynomial), and the variable
selection problem in regression \citep{Shibata83,Li87,Yang99} ($d$ is the
number of variables). A \emph{model selection criterion} is a function
$\delta: \Union_{n=0}^\infty \samplespace^n \rightarrow \posints$ that, given any data
sequence $x^n \in \samplespace^n$ of arbitrary length $n$, selects the model $\model_k$ with
index $k=\delta(x^n)$.

With each model $\model_k$ we associate a single prediction strategy
$\m{p}_k$. The bar emphasizes that $\m{p}_k$ is a
meta-strategy based on the prediction strategies in $\model_k$. In many
approaches to model selection, for example AIC and LOO, $\m{p}_k$ is
defined using some parameter estimator $\mltheta_k$, which maps a
sequence $x^n$ of previous observations to an estimated parameter value
that represents a ``best guess'' of the true/best distribution in the
model. Prediction is then based on this estimator: $\m{p}_k(X_{n+1}\mid
x^n) =p_{\mltheta_k(x^n)}(X_{n+1}\mid x^n)$, which also defines a joint
density $\m{p}_k(x^n)=\m{p}_k(x_1)\cdots\m{p}_k(x_n|x^{n-1})$. The
Bayesian approach to model selection or model averaging goes the other
way around. It starts out with a prior $w$ on $\Theta_k$, and then
defines the Bayesian marginal density
\begin{equation}
  \label{eq:bayesint}
  \pbayes_k(x^n) = \int_{\theta \in \Theta_k} 
    p_{\theta}(x^n) w(\theta)\dif\theta.
\end{equation}
When $\pbayes_k(x^n)$ is non-zero this joint density induces a unique
conditional density $$\pbayes_k(X_{n+1} \mid x^n ) =
\frac{\pbayes_k(X_{n+1},x^n)}{\pbayes_k(x^n)},$$ 
which is equal to the mixture
of $p_\theta$ according to the posterior, $w(\theta|x^n)
= p_\theta(x^n)w(\theta)/\int p_\theta(x^n)w(\theta)\dif\theta$, based on
$x^n$. Thus the Bayesian approach also defines a prediction strategy
$\pbayes_k(X_{n+1}|x^n)$. 

Associating a prediction strategy $\m{p}_k$ with each model $\model_k$
is known as the \emph{prequential approach to statistics}
\citep{dawid1984} or \emph{predictive MDL} \citep{rissanen1984}.
Regardless of whether $\m{p}_k$ is based on parameter estimation or on
Bayesian predictions, we may usually think of it as a universal code
relative to $\model_k$ \citep{grunwald2007}.

\begin{example}
\label{ex:bern}
Suppose $\samplespace = \{0,1\}$. Then a prediction strategy $\m{p}$
may be based on the Bernoulli model $\model = \{ p_{\theta} \mid
\theta \in [0,1] \}$ that regards $X_1, X_2, \ldots$ as a sequence of
independent, identically distributed Bernoulli random variables with
$P_\theta(X_{n+1}=1) = \theta$. We may predict $X_{n+1}$ using the
maximum likelihood (ML) estimator based on the past, i.e.\ using
$\mltheta(x^n) = n^{-1} \sum_{i=1}^n x_i$. The prediction for $x_1$ is
then undefined.  If we use a smoothed ML estimator such as the Laplace
estimator, $\hat{\theta}'(x^n) = (n+2)^{-1} (\sum_{i=1}^n x_i + 1)$,
then all predictions are well-defined. It is well-known that the
predictor $\m{p}'$ defined by $\m{p}'(X_{n+1} \mid x^n) =
p_{\mltheta'(x^n)}(X_{n+1})$ equals the Bayesian predictive
distribution based on a uniform prior. Thus in this case a Bayesian
predictor and an estimation-based predictor coincide! 

In general, for a parametric model $\model_k$, we can define
$\m{p}_k(X_{n+1} \mid x^n) = p_{{\mltheta'}_k(x^n)}(X_{n+1})$ for some
smoothed ML estimator $\mltheta'_k$. The joint distribution with density
$\m{p}_k(x^n)$ will then resemble, but in general not be precisely equal
to, the Bayes marginal distribution with density $\pbayes_k(x^n)$ under
some prior on $\model_k$ \citep[Chapter 9]{grunwald2007}.
%PETER heb 'end example' hier naartoe verplaatst. Zag er beter uit. 
\end{example}
\subsection{The switch distribution}
\label{sec:switchdefinition}

Suppose $p_1$, $p_2$, $\ldots$ is a list of prediction strategies for
$X^\infty$. (Although here the list is infinitely long, the developments
below can with little modification be adjusted to the case where the
list is finite.) We first define a family
$\basisfamily=\{q_\switchpar:\switchpar\in\switchpars\}$ of combinator
prediction strategies that switch between the original prediction
strategies. Here the parameter space $\switchpars$ is defined as
\begin{equation}
  \switchpars = \{(t_1,k_1),\ldots,(t_m,k_m) \in (\naturals \times \posints)^m \mid m\in\posints,
        0 = t_1 < \ldots < t_m\}.
\end{equation}
The parameter $\switchpar \in \switchpars$ specifies the identities of
$m$ constituent prediction strategies and the sample sizes, called
\emph{switch-points}, at which to switch between them. For
$\switchpar=((t'_1,k'_1),\ldots,(t'_{m'},k'_{m'}))$, let
$t_i(\switchpar)=t'_i$, $k_i(\switchpar)=k'_i$ and $m(\switchpar)=m'$.
We omit the argument when the parameter $\switchpar$ is clear from
context; e.g.\ we write $t_3$ for $t_3(\switchpar)$. For each
$\switchpar \in \switchpars$ the corresponding $q_\switchpar \in
\basisfamily$ is defined as:
\begin{equation}
\label{eq:switchbasis}
  q_\switchpar(X_{n+1} | x^n) =
    \begin{cases}
      \hfill p_{k_1}(X_{n+1} | x^n) & \text{if $n < t_2$,} \\
      \hfill p_{k_2}(X_{n+1} | x^n) & \text{if $t_2 \leq n < t_3$,} \\
      \hfill \vdots \hfill& \hfill\vdots\hfill\\
      p_{k_{m-1}}(X_{n+1} | x^n) & \text{if $t_{m-1} \leq n < t_m$,} \\
      \hfill p_{k_m}(X_{n+1} | x^n) & \text{if $t_m \leq n$}.
    \end{cases}
\end{equation}
Switching to the same predictor multiple times (consecutively or not) is
allowed. The extra switch-point $t_1$ is included to simplify notation;
we always take $t_1 = 0$, so that $k_1$ represents the strategy that is
used in the beginning, before any actual switch takes place.

Given a list of prediction strategies $p_1$, $p_2$, $\ldots$, we define
the switch distribution as a Bayesian mixture of the elements of
$\basisfamily$ according to a prior $\switchprior$ on $\switchpars$:
\begin{definition}[switch distribution]
  \label{def:switchdist}
  Suppose $\switchprior$ is a probability mass function on
  $\switchpars$. Then the \emph{switch distribution} $\Pswitch$ with
  prior $\switchprior$ is the distribution for $(X^\infty,\switchpar)$
  that is defined by the density
  \begin{equation}
    \label{eq:switch}
    \pswitch(x^n,\switchpar) = q_\switchpar(x^n) \cdot \switchprior(\switchpar)
  \end{equation}
  for any $n \in \posints$, $x^n \in \samplespace^n$, and $s \in \switchpars$.
\end{definition}
Hence the marginal likelihood of the switch distribution has density
\begin{equation}
    \pswitch(x^n)
      = \sum_{\switchpar \in \switchpars}
        q_\switchpar(x^n) \cdot \switchprior(\switchpar).
\end{equation}
Although the switch distribution provides a general way to combine
prediction strategies (see Section~\ref{sec:predictionexpert}), in this paper
it will only be applied to combine prediction strategies $\pbayes_1$,
$\pbayes_2$, $\ldots$ that correspond to parametric models. In this
case we may define a corresponding model selection criterion
$\delta_\text{sw}$. To this end, let  
$K_{n+1}:\switchpars\rightarrow\posints$ be a random variable that
denotes the strategy/model that is used to predict $X_{n+1}$ given
past observations $x^n$. Formally, let $i_0$ be the unique $i$ such
that $t_i(\switchpar) \leq n$ and either $t_{i+1}(\switchpar) > n$
(i.e.\ the current sample size $n$ is between the $i$-th and $i+1$-st
switch-point), or 
$i=m(\switchpar)$ (i.e.\ the current sample size $n$ is beyond the last
switch point). Then 
$K_{n+1}(\switchpar)=k_{i_0}(\switchpar)$. Now note that by Bayes' theorem,
the prior $\pi$, together with the data $x^n$, induces a posterior
$\pi({\bf s} \mid x^n) \propto q_{\bf s}(x^n) \pi({\bf s})$ on
switching strategies ${\bf s}$. This posterior on switching strategies
further induces a posterior on the model $K_{n+1}$ that is used to
predict $X_{n+1}$. 
Algorithm~\ref{algo:switch}, given in Section~\ref{sec:computation},
efficiently computes the posterior distribution on $K_{n+1}$ given
$x^n$:
\begin{equation}
\label{eq:posteriork}
  \pi(K_{n+1} = k \mid x^n)
    = \frac{\sum_{\{\switchpar : K_{n+1}(\switchpar) = k\}}
         q_{\switchpar}(x^n)\pi(\switchpar)}
    {\pswitch(x^n)},
\end{equation}
which is defined whenever $\pswitch(x^n)$ is non-zero, and can be
efficiently computed using Algorithm~\ref{algo:switch} (see
Section~\ref{sec:computation}). We turn this posterior distribution into
the model selection criterion 
\begin{equation}
  \delta_\text{sw}(x^n) = \arg \max_k \pi(K_{n+1}=k \mid x^n),
\end{equation}
which selects the model with maximum posterior probability.

\section{Consistency}
\label{sec:consistency}

If one of the models, say with index $k^*$, is actually true, then it
is natural to ask whether $\delta_\text{sw}$ is \emph{consistent}, in
the sense that it asymptotically selects $k^*$ with probability $1$.
Theorem~\ref{thm:consistencyagain} below states that, if the prediction
strategies $\m{p}_k$ associated with the models are Bayesian predictive
distributions, then $\delta_{\text{sw}}$ is consistent under certain
conditions which are only slightly stronger than those required for
standard Bayes factor model selection consistency. It is followed by
Theorem~\ref{thm:consistencyb}, which extends the result to the
situation where the $\m{p}_k$ are not necessarily Bayesian.

Bayes factor model selection is consistent if for all $k, k' \neq k$,
$\Pbayes_k(X^\infty)$ and $\Pbayes_{k'}(X^\infty)$ are mutually
singular, that is, if there exists a measurable set $A \subseteq
\samplespace^\infty$ such that $\Pbayes_k(A)=1$ and $\Pbayes_{k'}(A) =
0$~\citep{barron1998b}. For example, this can usually be shown to hold
if (a) the models are nested and (b) for each $k$, $\Theta_k$ is a
subset of $\Theta_{k+1}$ of $w_{k+1}$-measure $0$. In most interesting
applications in which (a) holds, (b) also holds \citep{grunwald2007}.
For consistency of $\delta_\text{sw}$, we need to strengthen the
mutual singularity-condition to a ``conditional''
mutual singularity-condition: we require that, for all $k'
\neq k$ and all $n$, all $x^n \in \samplespace^n$, the distributions
$\Pbayes_k(X_{n+1}^\infty \mid x^n)$ and $\Pbayes_{k'}(X_{n+1}^\infty
\mid x^n)$ are mutually singular. For example, if $X_1, X_2, \ldots$
are independent and identically distributed (i.i.d.) according to each
$P_\theta$ in all models, but also if $\samplespace$ is countable and
$\m{p}_k(x_{n+1} \mid x_n) > 0$ for all $k$, all $x^{n+1} \in
\samplespace^{n+1}$, then this conditional mutual singularity is
automatically implied by ordinary mutual singularity of
$\Pbayes_k(X^\infty)$ and $\Pbayes_{k'}(X^\infty)$.

Let $E_\switchpar = \{\switchpar' \in \switchpars \mid m(\switchpar')
> m(\switchpar), (t_i(\switchpar'),k_i(\switchpar'))=(t_i(\switchpar),k_i(\switchpar)) \text{ for $i = 1,\ldots,m(\switchpar)$}\}$ denote the set of all
possible extensions of $\switchpar$ to more switch-points. Let $\pbayes_1$,
$\pbayes_2$, $\ldots$ be Bayesian prediction strategies with respective
parameter spaces $\Theta_1$, $\Theta_2$, $\ldots$ and priors $w_1$,
$w_2$, $\ldots$, and let $\pi$ be the prior of the corresponding
switch distribution.

\begin{theorem}[Consistency of the switch distribution]
\label{thm:consistencyagain}

Suppose $\pi$ is positive everywhere on $\{\switchpar \in \switchpars \mid
m(\switchpar)=1\}$ and such that for some positive constant $c$,
for every $\switchpar \in \switchpars$, $c \cdot \pi(\switchpar) \geq
\pi(E_\switchpar)$. Suppose further that $\Pbayes_k(X_{n+1}^\infty \mid
x^n)$ and $\Pbayes_{k'}(X_{n+1}^\infty \mid x^n)$ are mutually singular
for all $k, k' \in \posints$, $k \neq k'$, all $n$, all 
$x^n \in \samplespace^n$.
Then, for all $k^* \in \posints$, for all $\theta^* \in \Theta_{k^*}$
except for a subset of $\Theta_{k^*}$ of $w_{k^*}$-measure $0$, the
posterior distribution on $K_{n+1}$ satisfies
\begin{equation}
  \label{eqn:thmconsistency}
  \pi(K_{n+1} = k^* \mid X^n) \overset{n \to \infty}{\longrightarrow} 1
    \quad \quad \text{with $P_{\theta^*}$-probability $1$.}
\end{equation}
\end{theorem}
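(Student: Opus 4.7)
The plan is to show that $1 - \pi(K_{n+1} = k^* \mid X^n) \to 0$ $P_{\theta^*}$-almost surely for $w_{k^*}$-almost every $\theta^* \in \Theta_{k^*}$. Writing $R_k(x^n) = \sum_{\switchpar : K_{n+1}(\switchpar) = k} q_\switchpar(x^n) \pi(\switchpar)$, we have $1 - \pi(K_{n+1} = k^* \mid x^n) = \sum_{k \neq k^*} R_k(x^n) / \sum_j R_j(x^n)$. The singleton parameter $\switchpar_0 = ((0, k^*))$ has strictly positive prior by the assumption of positivity on $\{\switchpar : m(\switchpar)=1\}$, and $q_{\switchpar_0}(x^n) = \pbayes_{k^*}(x^n)$, so $R_{k^*}(X^n) \geq \pi(\switchpar_0)\,\pbayes_{k^*}(X^n)$; it thus suffices to show $\sum_{k \neq k^*} R_k(X^n)/\pbayes_{k^*}(X^n) \to 0$. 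To apply the hypothesis $\pi(E_\switchpar) \leq c\,\pi(\switchpar)$, group each $\switchpar$ by its truncation $\switchpar^{(n)}$ to switch-points in $[0,n]$: all parameters with a common truncation agree on $q_\switchpar(x^n) = q_{\switchpar^{(n)}}(x^n)$, and they are contained in $\{\switchpar^{(n)}\} \cup E_{\switchpar^{(n)}}$, so their total prior mass is at most $(1+c)\,\pi(\switchpar^{(n)})$. This gives $\sum_{k \neq k^*} R_k(X^n) \leq (1+c)\,\bar{R}(X^n)$, where $\bar{R}(x^n) = \sum_{\switchpar : k_{m(\switchpar)}(\switchpar) \neq k^*} q_\switchpar(x^n)\,\pi(\switchpar)$.

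The central step, which I expect to be the principal obstacle, is to show $\bar{R}(X^n)/\pbayes_{k^*}(X^n) \to 0$ $\pbayes_{k^*}$-almost surely. Let $Q_\switchpar$ denote the distribution on $X^\infty$ with density $q_\switchpar$, and view $\bar{R}(X^n)/\pbayes_{k^*}(X^n)$ as the Radon--Nikodym derivative, on $\sigma(X^n)$, of the sub-probability measure $\bar{Q} := \sum_{\switchpar : k_m(\switchpar) \neq k^*} \pi(\switchpar)\,Q_\switchpar$ with respect to $\pbayes_{k^*}$. For each $\switchpar$ with $k_{m(\switchpar)} = k \neq k^*$, the conditional distribution $Q_\switchpar(X_{t_m+1}^\infty \mid X^{t_m})$ coincides with $\pbayes_k(X_{t_m+1}^\infty \mid X^{t_m})$, which by the conditional mutual singularity hypothesis is mutually singular with $\pbayes_{k^*}(X_{t_m+1}^\infty \mid X^{t_m})$; integrating the singular support over $X^{t_m}$ yields $Q_\switchpar \perp \pbayes_{k^*}$ on $X^\infty$. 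A countable mixture of measures each singular with $\pbayes_{k^*}$ is itself singular (take the union of the singular supports), so $\bar{Q} \perp \pbayes_{k^*}$. Hence, by the martingale convergence theorem applied to the non-negative $\pbayes_{k^*}$-martingale $\bar{R}(X^n)/\pbayes_{k^*}(X^n)$ (equivalently, by the Lebesgue decomposition), this ratio converges to $0$ $\pbayes_{k^*}$-almost surely.

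Finally, I would transfer the conclusion from $\pbayes_{k^*}$ to $P_{\theta^*}$: since $\pbayes_{k^*}(A) = \int P_\theta(A)\,w_{k^*}(\theta)\,\der\theta$, any event of $\pbayes_{k^*}$-probability $1$ has $P_\theta$-probability $1$ for all $\theta$ outside a $w_{k^*}$-null set, which is precisely the exceptional set allowed in the statement. Combining the three steps yields $\pi(K_{n+1} = k^* \mid X^n) \to 1$ $P_{\theta^*}$-almost surely for $w_{k^*}$-almost every $\theta^* \in \Theta_{k^*}$.
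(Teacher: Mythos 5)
Your proposal follows the same route as the paper's proof: bound the "bad" posterior mass by pairing each bad $\switchpar$ with a truncation whose prior mass controls it via the assumption $\pi(E_\switchpar)\leq c\,\pi(\switchpar)$, reduce to a mixture over the set $A = \{\switchpar : k_{m(\switchpar)}\neq k^*\}$, establish mutual singularity of that mixture with $\Pbayes_{k^*}$, and then use a density-ratio-to-zero argument (the paper cites Lemma~3.1 of \citet{Barron85}; you invoke martingale convergence / Lebesgue decomposition, which is the same fact). The transfer from $\Pbayes_{k^*}$-a.s.\ to $P_{\theta^*}$-a.s.\ is also identical.

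There is one genuine gap, and it is the same technical point the paper flags and resolves in a dedicated subsection. You write that conditional mutual singularity of $Q_\switchpar(X_{t_m+1}^\infty\mid X^{t_m})$ and $\Pbayes_{k^*}(X_{t_m+1}^\infty\mid X^{t_m})$ implies $Q_\switchpar\perp\Pbayes_{k^*}$ on $X^\infty$ by ``integrating the singular support over $X^{t_m}$.'' In the countable case this is immediate: if $B_{x^{t_m}}$ is the conditional singular support, then $B=\{y^\infty: y_{t_m+1}^\infty\in B_{y^{t_m}}\}$ is measurable and separates the two measures. But when $\samplespace$ is uncountable, $B$ need not be measurable, since the sets $B_{x^{t_m}}$ can vary in an entirely uncontrolled way with $x^{t_m}$. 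The paper handles this with Lemma~\ref{lem:eventuallysingular} and Corollary~\ref{cor:harremoes}, which exploit that $\samplespace\subseteq\reals^d$ is a separable metric space: approximate each $B_{x^{t_m}}$ by a finite union of basis open sets up to $\epsilon$, enumerate the finitely-supported unions, build a disjoint measurable partition $\{A_i^\epsilon\}$ of $\Omega_1$ accordingly, take $F^\epsilon=\bigcup_i A_i^\epsilon\times D_i$, and then let $\epsilon\downarrow 0$ along a geometric sequence to produce a single measurable separating set $G$. Without some argument of this kind, the passage from conditional to unconditional singularity is not justified in the continuous setting, which is the regime the theorem is meant to cover.

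A smaller remark on the truncation step: your claim that the parameters sharing truncation $\switchpar^{(n)}$ are contained in $\{\switchpar^{(n)}\}\cup E_{\switchpar^{(n)}}$ is correct, but note that $E_{\switchpar^{(n)}}$ (all extensions with switch-points anywhere after $t_m(\switchpar^{(n)})$, including in $(t_m, n]$) is strictly larger than the set of parameters that actually truncate to $\switchpar^{(n)}$. This over-counting is harmless for the upper bound, but it is worth making explicit that the inclusion, not equality, is what you use; otherwise the grouping appears to claim a partition of $U_n$ that does not hold.
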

The requirement that $c \cdot \pi(\switchpar) \geq \pi(E_\switchpar)$
is automatically satisfied if $\pi$ is of the
form
\begin{equation}\label{eq:prior}
\pi(\switchpar)=\pim(m)\pik(k_1)\prod_{i=2}^{m}\pit(t_i|t_i>t_{i-1})\pik(k_i),
\end{equation}
where $\pim$, $\pik$ and $\pit$ are priors on $\posints$ with full
support, and $\pim$ is geometric:
$\pim(m)=\theta^{m-1}(1-\theta)$ for some $0\le\theta<1$. In this case $c=\theta/(1-\theta)$.

We now extend the theorem to the case where the universal
distributions $\m{p}_1, \m{p}_2, \ldots$ are not necessarily Bayesian,
i.e.\ they are not necessarily of the form (\ref{eq:bayesint}).
It turns out that the ``meta-Bayesian'' universal distribution
$\Pswitch$ is still consistent, as long as the following condition
holds. The condition essentially expresses that, for each $k$,
$\m{p}_k$ must not be too different from a Bayesian predictive
distribution based on (\ref{eq:bayesint}). This
can be verified if all models $\model_k$ are exponential families, and the $\m{p}_k$
represent ML or smoothed ML estimators (see Theorems~2.1 and 2.2 of
\citep{LiY00}). We suspect that it holds as well for more general
parametric models and universal codes, but we do not know of any proof.
\paragraph{Condition} There exist
Bayesian prediction strategies $\mbayes{p}_1, \mbayes{p}_2, \ldots$ of form (\ref{eq:bayesint}), 
with continuous and strictly positive priors $w_1, w_2, \ldots$ such that
\begin{enumerate}
\item The conditions of Theorem~\ref{thm:consistencyagain} hold for
  $\mbayes{p}_1, \mbayes{p}_2, \ldots$ and the chosen
  switch distribution prior $\pi$.
\item For all $k \in \posints$, for each compact subset $\Theta'$
  of the interior of $\Theta_k$, there exists a $K$ such that for all
  $\theta \in \Theta'$, with $P_\theta$-probability 1, for all $n$
$$
- \log \m{p}_k(X^n) + \log \mbayes{p}_k(X^n) \leq K.
$$
\item For all $k, k' \in \posints$ with $k \neq k'$ and all $x^n \in
\samplespace^*$, the distributions $\mbayes{P}_{k}(X_{n+1}^\infty \mid
x^n)$ and $\m{P}_{k'}(X_{n+1}^\infty \mid x^n)$ are mutually singular.\\
\end{enumerate}
%% PETER: note that in the second part we have bayesian universal
%% model on the left and potentially nonbayesian model on the
%% right. This is as it should be!
\begin{theorem}[Consistency of the switch distribution, Part 2]
\label{thm:consistencyb}
Let $\m{p}_1, \m{p}_2, \ldots$ be prediction strategies and let $\pi$ be the 
prior of the corresponding switch distribution. 
Suppose that the condition above holds relative to $\m{p}_1, \m{p}_2, \ldots$
and $\pi$. Then, for all $k^* \in \posints$, for all $\theta^* \in \Theta_{k^*}$
except for a subset of $\Theta_{k^*}$ of Lebesgue-measure $0$, the
posterior distribution on $K_{n+1}$ satisfies
\begin{equation}
  \label{eqn:thmconsistencyb}
  \pi(K_{n+1} = k^* \mid X^n) \overset{n \to \infty}{\longrightarrow} 1
    \quad \quad \text{with $P_{\theta^*}$-probability $1$.}
\end{equation}
\end{theorem}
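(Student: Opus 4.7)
The plan is to reduce the claim to Theorem~\ref{thm:consistencyagain} applied to the Bayesian companions $\mbayes{p}_1,\mbayes{p}_2,\ldots$ supplied by the Condition. Part~1 of the Condition is exactly what is needed to invoke Theorem~\ref{thm:consistencyagain} for that family with the same prior $\pi$. Letting $\pi^{\text{B}}(\cdot\mid X^n)$ denote the posterior on $\switchpars$ induced by the corresponding ``Bayesian'' switch distribution, we obtain $\pi^{\text{B}}(K_{n+1}=k^*\mid X^n)\to 1$ with $P_{\theta^*}$-probability $1$ for every $\theta^*\in\Theta_{k^*}$ outside a $w_{k^*}$-null set. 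Because the $w_k$'s are continuous and strictly positive, $w_{k^*}$-null sets coincide with the Lebesgue-null subsets of $\Theta_{k^*}$, and after also discarding the (Lebesgue-null) boundary of $\Theta_{k^*}$ we have this convergence for Lebesgue-a.e.\ $\theta^*\in\interior(\Theta_{k^*})$.

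Fix such a $\theta^*$ and a compact neighborhood $\Theta'\subseteq\interior(\Theta_{k^*})$. Part~2 of the Condition supplies a constant $K$ with $\m{p}_{k^*}(X^n)\ge 2^{-K}\mbayes{p}_{k^*}(X^n)$ for all $n$, $P_{\theta^*}$-almost surely. Isolating the contribution of the no-switch parameter $\switchpar^{*}=((0,k^*))$ inside $\pswitch(X^n)$ then gives the denominator bound $\pswitch(X^n)\ge\pi(\switchpar^{*})\,2^{-K}\mbayes{p}_{k^*}(X^n)$, so it suffices to show that the ``bad mass'' $N(X^n):=\sum_{\switchpar:\,K_{n+1}(\switchpar)\neq k^*}q_{\switchpar}(X^n)\pi(\switchpar)$ is $o\bigl(\mbayes{p}_{k^*}(X^n)\bigr)$, $P_{\theta^*}$-a.s. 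Group each bad $\switchpar$ by its last switch-point $t_m(\switchpar)=t$ and current predictor $k_m(\switchpar)=k'\neq k^*$, and let $\switchpar^{-}$ denote $\switchpar$ with its last switch stripped off, so $q_{\switchpar}(X^n)=q_{\switchpar^{-}}(X^t)\,\m{p}_{k'}(X^n_{t+1}\mid X^t)$. The prior regularity $c\,\pi(\switchpar)\ge\pi(E_{\switchpar})$ inherited from Part~1 lets us bound the inner sum $\sum_{\switchpar^{-}}q_{\switchpar^{-}}(X^t)\pi(\switchpar^{-})$ by a constant multiple of $\pswitch(X^t)$, with remaining tail weights on $(t,k')$ that are summable. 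Dividing by $\mbayes{p}_{k^*}(X^n)=\mbayes{p}_{k^*}(X^t)\,\mbayes{p}_{k^*}(X^n_{t+1}\mid X^t)$ isolates the decisive factor $\m{p}_{k'}(X^n_{t+1}\mid X^t)/\mbayes{p}_{k^*}(X^n_{t+1}\mid X^t)$, which by Part~3 of the Condition (mutual singularity) and the martingale convergence theorem tends to $0$, $\mbayes{P}_{k^*}(\cdot\mid X^t)$-a.s.; the routine fact that a mixture singular from $\m{P}_{k'}(\cdot\mid X^t)$ forces $w_{k^*}$-a.e.\ mixture component to be singular from it (with the same singular set) transfers this to $P_{\theta^*}$-a.s.\ convergence.

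The main obstacle is upgrading this \emph{pointwise}-in-$(t,k',X^t)$ convergence to uniform-in-$\switchpar$ control of $N(X^n)$. The prior's tail-summability does double duty here: it already collapsed the inner sum to $O(\pswitch(X^t))$, and it also makes the outer weights on $(t,k')$ summable, enabling a Borel--Cantelli-style argument that rules out infinitely many ``bad'' excursions simultaneously across the countably many last-switch pairs. Combining the denominator lower bound with the resulting $N(X^n)=o\bigl(\mbayes{p}_{k^*}(X^n)\bigr)$ yields $\pi(K_{n+1}\neq k^*\mid X^n)\to 0$, $P_{\theta^*}$-a.s., which is~\eqref{eqn:thmconsistencyb}.
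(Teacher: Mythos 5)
Your overall strategy matches the paper's: use Part~2 of the Condition to lower-bound the denominator by $\pi(\switchpar^*)\,2^{-K}\mbayes{p}_{k^*}(X^n)$, and then control the numerator (the ``bad mass'') via the mutual-singularity hypothesis from Part~3. However, the route you take for the numerator diverges from the paper's, and it contains a real gap at exactly the step you yourself flag as ``the main obstacle.''

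After decomposing the bad mass over last-switch pairs $(t,k')$, you are left with a countably infinite sum of nonnegative ratios, each of which converges to zero $\mbayes{P}_{k^*}$-a.s.\ for its own fixed $(t,k')$. Pointwise convergence of each term does not give convergence of the sum: these density ratios are nonnegative $\mbayes{P}_{k^*}$-martingales, unbounded in general, and dominated convergence is not available. A Borel--Cantelli argument controls, for each $\epsilon>0$, how often each individual ratio exceeds $\epsilon$; it does not control the \emph{sum} over the countably many $(t,k')$ pairs, which could stay bounded away from zero even though every summand eventually dies. Saying ``Borel--Cantelli-style'' without specifying the events and the summable bound leaves the central analytic difficulty unresolved. (There is also a smaller wrinkle in the decomposition itself: your grouping identifies the current predictor $K_{n+1}(\switchpar)$ with $k_m(\switchpar)$, which is only valid when $t_m(\switchpar)\le n$; bad parameters with planned future switches beyond $n$ are not cleanly covered, and the prior $\pit(t_i\mid t_i>t_{i-1})$ does not factor over ``stripping the last switch'' in the way the displayed identity for the inner sum suggests.)

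The paper sidesteps all of this by never decomposing. It first reduces, as in the proof of Theorem~\ref{thm:consistencyagain}, from $U_n$ to the $n$-independent set $A=\{\switchpar: k_m(\switchpar)\neq k^*\}$, using the prior assumption $c\,\pi(\switchpar)\ge\pi(E_\switchpar)$ (inherited from Part~1) to get $\sum_{\switchpar\in U_n}\pi(\switchpar)q_\switchpar(X^n)\le(1+c)\sum_{\switchpar\in A}\pi(\switchpar)q_\switchpar(X^n)$. It then treats the entire mixture $r(X^n)=\sum_{\switchpar\in A}\pi(\switchpar)q_\switchpar(X^n)$ as one distribution $R$: each $Q_\switchpar$ with $\switchpar\in A$ is mutually singular with $\mbayes{P}_{k^*}$ (by Part~3 plus Corollary~\ref{cor:harremoes}), a countable mixture of such distributions is again mutually singular with $\mbayes{P}_{k^*}$, and Barron's Lemma~3.1 applied once to the pair $(R,\mbayes{P}_{k^*})$ yields $r(X^n)/\mbayes{p}_{k^*}(X^n)\to 0$ a.s.\ with no interchange-of-limits issue. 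This single-mixture step is precisely the idea missing from your write-up. One further note: your opening paragraph invokes Theorem~\ref{thm:consistencyagain} for the Bayesian companions $\mbayes{p}_k$ and concludes $\pi^{\text{B}}(K_{n+1}=k^*\mid X^n)\to 1$, but that conclusion is never used afterwards; the consistency of the \emph{Bayesian} switch distribution does not transfer by itself to the non-Bayesian one, and indeed your subsequent argument starts over from scratch.
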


\section{Risk Convergence Rates}
\label{sec:nonparametrica}
In this section and the next we investigate how well the
switch distribution is able to predict future data in terms of
expected logarithmic loss or, equivalently, how fast estimates based
on the switch distribution converge to the true distribution in terms
of Kullback-Leibler risk. In Section~\ref{sec:prellie}, we define the
central notions of model classes, risk, convergence in Ces\`aro mean,
and minimax convergence rates, and we give the conditions on the prior
distribution $\pi$ under which our further results hold. We then
(Section~\ref{sec:switchvsbma}) show that the switch distribution
cannot converge any slower than standard Bayesian model averaging. As
a proof of concept, in Section~\ref{sec:histogram} we present
Theorem~\ref{thm:switchrsy}, which establishes that, in contrast to
Bayesian model averaging, the switch distribution converges at the
minimax optimal rate in a nonparametric histogram density estimation
setting.

In the more technical  Section~\ref{sec:nonparametricb}, we develop a
number of general tools for establishing optimal
convergence rates for the switch distribution, and we show that
optimal rates are achieved in, for example, nonparametric density
estimation with exponential families and (basic) nonparametric linear
regression, and also in standard parametric situations. 

\subsection{Preliminaries}
\label{sec:prellie}
\subsubsection{Model Classes}
\label{sec:modelclasses}
The setup is as follows. Suppose $\model_1,\model_2,\ldots$ is a
sequence of parametric models with associated estimators
$\Pbayes_1,\Pbayes_2,\ldots$ as before. Let us write $\model =
\union_{k=1}^\infty \model_k$ for the union of the models. Although
formally $\model$ is a set of prediction strategies, it will often be
useful to consider the corresponding set of distributions for $X^\infty
= (X_1, X_2, \ldots)$. With minor abuse of notation we will denote this
set by $\model$ as well.

To test the predictions of the switch distribution, we will want to
assume that $X^\infty$ is distributed according to a distribution $P^*$
that satisfies certain restrictions. These restrictions will always be
formulated by assuming that $P^* \in \model^*$, where $\model^*$ is some
restricted set of distributions for $X^\infty$. For simplicity, we will
also assume throughout that, for any $n$, the conditional distribution
$P^*(X_n \mid X^{n-1})$ has a density (relative to the Lebesgue or
counting measure) with probability one under $P^*$. For example, if
$\samplespace = [0,1]$, then $\model^*$ might be the set of all i.i.d.\
distributions that have uniformly bounded densities with uniformly
bounded first derivatives, as will be considered in
Section~\ref{sec:histogram}. In general, however, the sequence
$X^\infty$ need not be i.i.d.\ (under the elements of $\model^*$).

We will refer to any set of distributions for $X^\infty$ as a
\emph{model class}. Thus both $\model$ and $\model^*$ are model classes.
In Section~\ref{sec:parametric} it will be assumed that $\model^*
\subseteq \model$, which we will call the \emph{parametric} setting.
Most of our results, however, deal with various
\emph{nonparametric} situations, in which $\model^* \setminus \model$ is
non-empty. It will then be useful to emphasize that $\model^*$ is (much)
larger than $\model$ by calling $\model^*$ a \emph{nonparametric model
class}.

\subsubsection{Risk}

Given $X^{n-1} = x^{n-1}$, we will measure how well any estimator
$\m{P}$ predicts $X_n$ in terms of the Kullback-Leibler (KL) divergence
$D(P^*(X_n=\cdot \mid x^{n-1})\|\m{P}(X_n=\cdot \mid x^{n-1}))$
\citep{barron1998a}. Suppose that $P$ and $Q$ are distributions for some
random variable $Y$, with densities $p$ and $q$ respectively. Then the
KL divergence from $P$ to $Q$ is defined as
\begin{equation*}
  D(P\|Q) = E_P\left[\log \frac{p(Y)}{q(Y)}\right].
\end{equation*}
KL divergence is never negative, and reaches zero if and only if $P$
equals $Q$. Taking an expectation over $X^{n-1}$ leads to the standard
definition of the \emph{risk} of estimator $\m{P}$ at sample size $n$
relative to KL divergence:
%TIM: Eigenlijk past de term `sample size' hier niet goed, want de
%grootte van het sample is $n-1$ ipv $n$.
%PETER: is waar, maar niet erg
\begin{equation}\label{eq:risk}
  \risk_n(P^*, \m{P}) = \mathop E_{X^{n-1} \sim P^*} \left[D\big(P^*(X_n = \cdot \mid
  X^{n-1}) \| \m{P}(X_n = \cdot \mid X^{n-1})\big) \right]. 
\end{equation}
Instead of the standard KL risk, we will study the {\em cumulative risk}
\begin{equation}
  \crisk_n(P^*, \m{P}) := \sum_{i=1}^n \risk_i(P^*,\m{P}),
\end{equation}
%
% TIM: Deling door n is nodig, want anders convergeert het niet. Formeel maakt
% het niet uit, want we delen er toch steeds twee op elkaar.
% PETER: Klopt, maar dit zou tot teveel veranderingen (en kans op
% introductie van fouten) leiden. Dus ik heb het anders opgelost. 
%
because of its connection to information theoretic redundancy (see e.g.\
\citep{barron1998a} or \citep[Chapter 15]{grunwald2007}): For all $n$ it
holds that
\begin{equation}\label{eq:redundancyrisk}
  \sum_{i=1}^n \risk_i(P^*,\m{P})
  = \sum_{i=1}^n E\left[\log \frac{p^*(X_i \mid X^{i-1})}{\m{p}(X_i
      \mid X^{i-1})}\right]
  = E\left[\log \prod_{i=1}^n \frac{p^*(X_i \mid X^{i-1})}{\m{p}(X_i
      \mid X^{i-1})}\right]
  %= E\left[\log\frac{p^*(X^n)}{\m{p}(X^n)}\right]
  = D\left(P^{*(n)} \| \Pbayes^{(n)}\right),
\end{equation}
where the superscript $(n)$ denotes taking the marginal of the
distribution on the first $n$ outcomes. We will show convergence of
the predictions of the switch distribution in terms
of the cumulative rather than the individual risk. This notion of
convergence, defined below, is equivalent to the well-studied notion of
convergence in Ces\`aro mean. It has been considered by, among others,
\citet{rissanen1992}, \citet{barron1998a}, \citet{PolandH05}, and its
connections to ordinary convergence of the risk were investigated in
detail by \citet{grunwald2007}. 

Asymptotic properties like `convergence' and `convergence in Ces\`aro
mean' will be expressed conveniently using the following notation, which
extends notation from \citep{YangB99}:
\begin{definition}[Asymptotic Ordering of Functions]
  For any two nonnegative functions $g,h: \posints \to \reals$ and any
  $c \geq 0$ we write
    $g \preceq_c h$
  if for all $\epsilon > 0$ there exists an $n_0$ such that for all $n
  \geq n_0$ it holds that $g(n) \leq (1+\epsilon) \cdot c \cdot h(n)$. The less
  precise statement that there exists {\em some\/}  $c >
  0$ such that $g \preceq_c \cdot h$, will be denoted by
    $g \preceq h$.
  (Note the absence of the subscript.) For $c > 0$, we define $h
  \succeq_c g$ to mean $g \preceq_{1/c} h$, and $h \succeq g$ means that
  for {\em some\/} $c > 0$, $h \succeq_c g$. Finally, we say that
    $g \asymp h$
  if both $g \preceq h$ and $h \preceq g$.
\end{definition}
Note that $g \preceq h$ is equivalent to $g(n) = O(h(n))$. 
One may think of $g(n) \preceq_c h(n)$ as another way of writing
$\limsup_{n \to \infty} g(n)/h(n) \leq c$. The two statements are
equivalent if $h(n)$ is never zero. 
%
%PETER Volgende uitleg leek me te gedetailleerd
%Using $\preceq_1$ has
%the advantage that it is still defined even if $h(n)$ does become
%zero. It also emphasizes that the relation is a partial order.

We can now succinctly state that the risk of an estimator $\m{P}$
\emph{converges} to $0$ at rate $f(n)$ if $\risk_n(P^*,\m{P}) \preceq_1
f(n)$, where $f : \posints \to \reals$ is a nonnegative function such
that $f(n)$ goes to $0$ as $n$ increases. We say  that $\m{P}$ 
converges to 0 at rate at least $f(n)$ 
\emph{in Ces\`aro mean} if $\frac{1}{n}\sum_{i=1}^n \risk_i(P^*,\m{P})
\preceq_1 \frac{1}{n} \sum_{i=1}^n f(i)$. As $\preceq_1$-ordering is
invariant under multiplication by a positive constant, convergence in
Ces\`aro mean is equivalent to asymptotically bounding the cumulative
risk of $\m{P}$ as
\begin{equation}
\label{eq:cumulative}
\sum_{i=1}^n \risk_i(P^*,\m{P}) \preceq_1 \sum_{i=1}^n
f(i).
\end{equation}
We will always express convergence in Ces\`aro mean in
terms of cumulative risks as in (\ref{eq:cumulative}). The reader may verify that if the risk of
$\m{P}$ is always finite and converges to $0$ at rate $f(n)$ and
$\lim_{n \rightarrow \infty} \sum_{i=1}^n f(n) = \infty$, then the
risk of $\m{P}$ also converges in Ces\`aro mean at rate $f(n)$.
Conversely, suppose that the risk of $\m{P}$ converges in Ces\`aro
mean at rate $f(n)$. Does this also imply that the risk of $\m{P}$
converges to $0$ at rate $f(n)$ in the ordinary sense? The answer is
``almost'', as shown in \citep{grunwald2007}: The risk of $\m{P}$ may
be strictly larger than $f(n)$ for some $n$, but the \emph{gap}
between any two $n$ and $n'> n $ at which the risk of $\m{P}$ exceeds
$f$ must become infinitely large with increasing $n$. This indicates
that, although convergence in Ces\`aro mean is a weaker notion than
standard convergence, obtaining fast Ces\`aro mean convergence rates
is still a worthy goal in prediction and estimation. We explore the
connection between Ces\`aro and ordinary convergence in more detail in
Section~\ref{sec:standard}.
\subsubsection{Minimax Convergence Rates}
The worst-case cumulative risk of
the switch distribution is given by
\begin{equation}
  \Gsw(n) = \sup_{P^*\in\model^*}\sum_{i=1}^n \risk_i(P^*,\Pswitch).
  \label{eq:gsw}
\end{equation}
We will compare it to the
\emph{minimax cumulative risk}, defined as:
\begin{equation}
  \Gmmfix(n) := \inf_{\m{P}} \sup_{P^* \in \model^*}
    \sum_{i=1}^n  \risk_i(P^*, \m{P}),
  \label{eq:mmfixed}
\end{equation}
where the infimum is over all estimators $\m{P}$ as defined in
Section~\ref{sec:preliminaries}. We will say that the switch distribution achieves the \emph{minimax
convergence rate in Ces\`aro mean} (up to a multiplicative constant) if
$\Gsw(n) \preceq \Gmmfix(n)$.
Note that there
is no requirement that $\m{P}(X_{i+1} \mid x^i)$ is a distribution in
$\model^*$ or $\model$; We are looking at the worst case over all
possible estimators, irrespective of the model class, $\model$, used to
approximate $\model^*$. Thus, we may call $\m{P}$ an ``out-model
estimator'' \citep{grunwald2007}.

\subsubsection{Restrictions on the Prior}
\label{sec:restrictions}
Throughout our analysis of the achieved rate of convergence we will
require that the prior of the switch distribution, $\pi$, can be
factored as in \eqref{eq:prior}, and is chosen to satisfy
\begin{equation}
  -\log \pim(m) = O(m), \ \ 
  -\log \pik(k) = O(\log k), \ \ 
  -\log \pit(t) = O(\log t).
  \label{eq:mmprior}
\end{equation}
Thus $\pim$, the prior on the total number of distinct predictors, is
allowed to decrease either exponentially (as required for
Theorem~\ref{thm:consistencyagain}) or polynomially, but $\pit$ and
$\pik$ cannot decrease faster than polynomially. For example, we could
set $\pit(t) = 1/(t(t+1))$ and $\pik(k) = 1/(k(k+1))$, or we could
take the universal prior on the integers \citep{Rissanen83}.

\subsection{Never Much Worse than Bayes}
\label{sec:switchvsbma}
Suppose that the estimators $\Pbayes_1, \Pbayes_2, \ldots$ are
Bayesian predictive distributions, defined by their densities 
as in (\ref{eq:bayesint}). 
The following lemma expresses that the Ces\`aro mean of the risk
achieved by the switch distribution is never much higher than that of
Bayesian model averaging, which is itself never much higher than that of
any of the Bayesian estimators $\Pbayes_k$ under consideration.
\begin{lemma}\label{lem:switchvsbma}
  Let $\Pswitch$ be the switch distribution for
  $\Pbayes_1,\Pbayes_2,\ldots$ with prior $\switchprior$ of the
  form~\eqref{eq:prior}. Let $\Pbma$ be the Bayesian model averaging
  distribution for the same estimators, defined with respect to the
  same prior on the estimators $\pik$. Then, for all $n \in \posints$,
  all $x^n\in\samplespace^n$, and all $k\in\posints$,
  \begin{equation}
    \pswitch(x^n)
      \quad\geq\quad \pim(1)\pbma(x^n)
      \quad\geq\quad \pim(1)\pik(k)\pbayes_k(x^n).
  \end{equation}
  Consequently, if $X_1$, $X_2$, $\ldots$ are distributed according to
  any distribution $P^*$, then for any $k \in \posints$,
  \begin{equation}
    \sum_{i=1}^n \risk_i(P^*,\Pswitch)
    ~\leq~
    \sum_{i=1}^n \risk_i(P^*,\Pbma)-\log\pim(1)
    ~\leq~
    \sum_{i=1}^n \risk_i(P^*,\Pbayes_k)-\log\pim(1)-\log\pik(k).
  \end{equation}
\end{lemma}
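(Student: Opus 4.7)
The plan is to prove the two density inequalities first and then lift them to cumulative-risk inequalities using identity~\eqref{eq:redundancyrisk}. The density inequalities are really just a selection argument: the mixture defining $\pswitch$ runs over all of $\switchpars$, but we can restrict the sum to the "never switch" parameters, i.e.\ those $\switchpar$ with $m(\switchpar)=1$, and throw away the rest.

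Concretely, every $\switchpar \in \switchpars$ with $m(\switchpar)=1$ has the form $\switchpar=((0,k))$ for some $k\in\posints$, and for such $\switchpar$ the density in~\eqref{eq:switchbasis} reduces to $q_{\switchpar}(x^n)=\pbayes_k(x^n)$. By the assumed product form~\eqref{eq:prior} of the prior, $\pi(\switchpar)=\pim(1)\pik(k)$ for these $\switchpar$. So, keeping only the $m=1$ terms in the mixture,
\begin{equation*}
  \pswitch(x^n)
  = \sum_{\switchpar\in\switchpars} q_{\switchpar}(x^n)\pi(\switchpar)
  \;\geq\; \sum_{k\in\posints} \pim(1)\pik(k)\pbayes_k(x^n)
  = \pim(1)\pbma(x^n).
\end{equation*}
The second inequality, $\pbma(x^n)\geq \pik(k)\pbayes_k(x^n)$, is immediate from the definition of $\pbma$ as a nonnegative mixture.

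For the risk bounds, apply identity~\eqref{eq:redundancyrisk} to each of $\Pswitch$, $\Pbma$ and $\Pbayes_k$: for any estimator $\m{P}$,
\begin{equation*}
  \sum_{i=1}^n \risk_i(P^*,\m{P})
  = E_{P^*}\!\left[\log\frac{p^*(X^n)}{\m{p}(X^n)}\right].
\end{equation*}
Subtracting the analogous expressions for $\Pswitch$ and $\Pbma$ and using the density inequality $\pswitch(x^n)\geq \pim(1)\pbma(x^n)$ pointwise,
\begin{equation*}
  \sum_{i=1}^n \risk_i(P^*,\Pswitch) - \sum_{i=1}^n \risk_i(P^*,\Pbma)
  = E_{P^*}\!\left[\log\frac{\pbma(X^n)}{\pswitch(X^n)}\right]
  \leq \log\frac{1}{\pim(1)} = -\log\pim(1),
\end{equation*}
which gives the first cumulative-risk inequality. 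The second follows identically from $\pbma(x^n)\geq \pik(k)\pbayes_k(x^n)$, picking up an additional $-\log\pik(k)$ term.

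There is no real obstacle here; the only thing to be careful about is verifying that the restriction to $m(\switchpar)=1$ in $\switchpars$ is well-matched to the factorization~\eqref{eq:prior} (so that the prior mass of the "no-switch" parameters factors cleanly as $\pim(1)\pik(k)$ without any stray normalization constants from $\pit$). Once this bookkeeping is done, both parts of the lemma follow in a line.
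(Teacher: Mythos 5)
Your proof is correct and follows essentially the same route as the paper's: restrict the mixture defining $\pswitch$ to the no-switch parameters $\switchpar=((0,k))$ (where the prior factors as $\pim(1)\pik(k)$ because the product over later switch-points in~\eqref{eq:prior} is empty), obtain the pointwise density inequalities, and then convert them to cumulative-risk inequalities via~\eqref{eq:redundancyrisk}. The bookkeeping point you flag at the end is precisely right and is not an issue: for $m=1$ the $\pit$ factors simply do not appear.
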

As mentioned in the introduction, one advantage of model averaging
using $\pbma$ is that it always predicts almost as well as the
estimator $\m{p}_k$ for \emph{any} $k$, including the $\m{p}_k$ that
yields the best predictions overall. Lemma~\ref{lem:switchvsbma}
shows that this property is shared by $\pswitch$, which
multiplicatively dominates $\pbma$. In the sequel, we 
investigate under which circumstances the switch distribution may
achieve a \emph{smaller} cumulative risk than Bayesian model averaging.

\subsection{Histogram Density Estimation}
\label{sec:histogram}

How many bins should be selected in density estimation based on
histogram models with equal-width bins? Suppose $X_1$, $X_2$, $\ldots$
take outcomes in $\samplespace = [0,1]$ and are distributed i.i.d.\
according to $P^* \in \model^*$, where $P^*(X_n)$ has density $p^*$ for
all $n$. Let $p^*(x^n) = \prod_{i=1}^n p^*(x_i)$ for $x^n \in
\samplespace^n$. Let us restrict $\model^*$ to the set of distributions
with densities that are uniformly bounded above and below and also have
uniformly bounded first derivatives. In particular, suppose there exist
constants $0 < c_0 < 1 < c_1$ and $c_2$ such that
\begin{equation}
  \model^* = \left\{P^* \mid c_0 \leq p^*(x) \leq c_1 \text{ and }
                \left|d/dx ~ p^*(x)\right| \leq c_2
                \text{ for all $x \in [0,1]$} \right\}.
  \label{eqn:rsydensities}
\end{equation}

In this setting the minimax convergence rate in Ces\`aro mean can be
achieved using histogram models with bins of equal width (see below).
The equal-width histogram model with $k$ bins, $\model_k$, is specified
by the set of densities $\{p_\vec{\theta}\}$ on $\samplespace = [0,1]$
that are constant within the $k$ bins $[0,a_1]$, $(a_1,a_2]$, $\ldots$,
$(a_{k-1},1]$, where $a_i = i/k$. In other words, $\model_k$ contains
any density $p_\vec{\theta}$ such that, for all $x,x' \in [0,1]$ that
lie in the same bin, $p_\vec{\theta}(x) = p_\vec{\theta}(x')$. The
$k$-dimensional parameter vector $\vec{\theta} = (\theta_1, \ldots,
\theta_k)$ denotes the probability masses of the bins, which have to sum
up to one: $\sum_{i=1}^k \theta_i = 1$. Note that this last constraint
makes the number of degrees of freedom one less than the number of bins.
Following \citet{yu1992} and \citet{rissanen1992}, we associate the
following estimator $\m{p}_k$ with model $\model_k$:
\begin{equation}
\label{eq:laplace}
  \m{p}_k(X_{n+1}\mid x^n) := \frac{n_{X_{n+1}}(x^n) + 1}{n + k}\cdot k,
\end{equation}
where $n_{X_{n+1}}(x^n)$ denotes the number of outcomes in $x^n$ that
fall into the same bin as $X_{n+1}$. As in Example~\ref{ex:bern}, these
estimators may both be interpreted as being based on parameter
estimation (estimating $\theta_i = (n_i(x^n) + 1)/(n+k)$, where
$n_i(x^n)$ denotes the number of outcomes in bin $i$) or on Bayesian
prediction (a uniform prior for $\vec{\theta}$ also leads to this
estimator \citep{yu1992}).

The minimax convergence rate in Ces\`aro mean for $\model^*$ is of the
order of $n^{-2/3}$ \citep[Theorems~3.1 and 4.1]{yu1992}\footnote{We note
that \citep{yu1992} reproduces part of Theorem~1 from
\citep{rissanen1992} without the (necessary) condition that $c_0 < 1 < c_1$.},
which is equivalent to the statement that
\begin{equation}
  \Gmmfix(n) \preceq n^{1/3}.
\end{equation}
This rate is achieved up to a multiplicative constant by the model
selection criterion $\delta(x^n) = \ceil{n^{1/3}}$, which,
irrespective of the observed data, uses the
histogram model with $\ceil{n^{1/3}}$ bins to predict $X_{n+1}$
\citep{rissanen1992}:
\begin{equation}
 \sup_{P^* \in \model^*} \sum_{i=1}^n
  \risk_i(P^*,\m{P}_{\ceil{i^{1/3}}}) \preceq n^{1/3}.
  \label{eqn:rsy1}
\end{equation}
The optimal rate in Ces\`aro mean is also achieved (up to a
multiplicative constant) by the switch distribution:
\begin{theorem}
  \label{thm:switchrsy}
  Let $\m{p}_1$, $\m{p}_2$, $\ldots$ be histogram estimators as in
  \eqref{eq:laplace}, and let $\pswitch$ denote the switch distribution
  relative to these estimators with prior that satisfies the conditions
  in \eqref{eq:mmprior}. Then
  \begin{equation}
 \Gsw(n) =     \sup_{P^* \in \model^*} \sum_{i=1}^n \risk_i(P^*,\Pswitch) \preceq n^{1/3}.
  \end{equation}
\end{theorem}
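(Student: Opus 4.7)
The plan is to exhibit, for each $n$, a single switching parameter $\switchpar^*=\switchpar^*(n)\in\switchpars$ whose oracle code $q_{\switchpar^*}$ is within a constant factor of the oracle bin-count schedule $\ceil{i^{1/3}}$ in cumulative risk, while only switching $O(\log n)$ times, and then apply the Bayesian mixture lower bound $\pswitch(x^n)\ge q_{\switchpar^*}(x^n)\pi(\switchpar^*)$. The key point is that switching at every integer (as in the oracle schedule behind (\ref{eqn:rsy1})) would incur a prior cost of order $n^{1/3}\log n$, which is too large; a dyadic schedule fixes this.

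\textbf{Step 1 (mixture bound).} Directly from Definition \ref{def:switchdist}, for every $\switchpar$ we have $\pswitch(x^n)\ge q_\switchpar(x^n)\pi(\switchpar)$. Taking $-\log$, expecting under $P^*$, and applying the redundancy identity (\ref{eq:redundancyrisk}) to both $\Pswitch$ and $Q_\switchpar$ gives, uniformly in $P^*\in\model^*$,
\begin{equation*}
\sum_{i=1}^n \risk_i(P^*,\Pswitch)\;\le\;\sum_{i=1}^n \risk_i(P^*,Q_\switchpar)\,-\,\log\pi(\switchpar).
\end{equation*}
\textbf{Step 2 (dyadic schedule).} Take $\switchpar^*$ with $m=\lceil\log_2 n\rceil+1$ blocks, switch points $t_1=0$, $t_j=2^{j-2}$ for $j\ge 2$, and inside block $j$ the histogram estimator with $k_j=\lceil 2^{(j-1)/3}\rceil$ bins. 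Thus the active bin count stays within a bounded factor of $i^{1/3}$ throughout, yet $m=O(\log n)$.

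\textbf{Step 3 (block-wise risk).} Using the classical bias-variance decomposition $D(P^*\|\m{p}_k)=D(P^*\|\bar p^{(k)})+D(q^{(k)}\|\hat q^{(k)})$ for the Laplace-smoothed equal-width histogram, the uniform bounds $c_0\le p^*\le c_1$ and $|dp^*/dx|\le c_2$ defining (\ref{eqn:rsydensities}) give the pointwise KL risk bound $\risk_i(P^*,\m{p}_k)\le C(k/i+1/k^{2})$, with $C=C(c_0,c_1,c_2)$. Summing over $i\in[2^{j-2},2^{j-1})$ with $k=k_j\asymp 2^{(j-1)/3}$ yields a contribution of order $2^{(j-1)/3}$ per block, and geometric summation over $j\le\log_2 n+O(1)$ produces
\begin{equation*}
\sup_{P^*\in\model^*}\sum_{i=1}^n \risk_i(P^*,Q_{\switchpar^*})=O(n^{1/3}).
\end{equation*}
\textbf{Step 4 (prior cost).} Using the factorization (\ref{eq:prior}) and the polynomial tail assumptions (\ref{eq:mmprior}),
\begin{equation*}
-\log\pi(\switchpar^*)\;\le\;O(m)+\sum_{j=1}^m\bigl[O(\log t_j)+O(\log k_j)\bigr]=O(\log^2 n)=o(n^{1/3}).
\end{equation*}
Combining the four steps gives $\Gsw(n)\preceq n^{1/3}$.

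The main technical obstacle is the pointwise risk bound $\risk_i(P^*,\m{p}_k)\le C(k/i+1/k^{2})$ uniformly over $\model^*$ that drives Step 3; it is classical for Laplace-smoothed histograms on the Lipschitz class (\ref{eqn:rsydensities}) and can either be invoked from \citep{yu1992,rissanen1992} or derived by bounding the approximation error $D(P^*\|\bar p^{(k)})=O(1/k^2)$ via the Lipschitz constant $c_2$ and the uniform lower bound $c_0$, together with the standard estimation bound $E[D(q^{(k)}\|\hat q^{(k)})]=O(k/i)$ for the smoothed $k$-cell multinomial. Everything else is book-keeping over dyadic blocks and the prior factorization.
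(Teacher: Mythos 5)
Your proof is correct and follows the same essential strategy as the paper's: exhibit a sparse (logarithmically switching) parameter $\switchpar$ whose bin count tracks $i^{1/3}$ to within a constant factor, bound the cumulative risk of $q_{\switchpar}$, and pay only $O(\log^2 n) = o(n^{1/3})$ for the prior. There are two structural differences worth noting. First, the paper routes the argument through Lemma~\ref{lem:rateofconvergence} (itself built on Lemma~\ref{lem:simulateoracle}), whereas your Step~1 plus Step~4 is essentially an inlined re-derivation of Lemma~\ref{lem:simulateoracle} — valid, but the paper's abstraction is reused in several later proofs, so it earns its keep. Second, and more substantively, the paper invokes Theorem~\ref{thm:rsy2} (the Rissanen--Speed--Yu bound $\sup_{P^*}\risk_n(P^*,\m{P}_k)\preceq_1\alpha^{2/3}Cn^{-2/3}$ uniformly for $k$ within a lag factor of $\ceil{n^{1/3}}$), while you substitute the pointwise bias--variance bound $\risk_i(P^*,\m{p}_k)\le C(k/i+1/k^{2})$. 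Yours is a somewhat more elementary, self-contained input (the exponential-family Pythagorean decomposition plus a Lipschitz approximation bound and a smoothed-multinomial estimation bound) and it directly implies the Rissanen-style statement, so both are fine; but you should be aware that what is literally stated in \citep{yu1992,rissanen1992} is the lagged form, so you would either have to extract your pointwise bound from their proofs or supply the short derivation you sketch. Finally, the paper uses a geometric schedule with ratio $\alpha>1$ and takes $\alpha\to 1$ to hit the sharp constant $A$ from \eqref{eqn:rsy1}; your fixed dyadic schedule ($\alpha=2$) gives a worse constant. Since the theorem only asserts $\preceq$, not $\preceq_1 A$, this does not affect correctness of your proof for the stated claim — but it does mean you are proving a genuinely weaker statement than the paper does. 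Two small points you should tighten in a final write-up: (a) verify that $-\log\pit(t_i\mid t_i>t_{i-1})\le -\log\pit(t_i)=O(\log t_i)$ (it does, since conditioning only increases the probability), and (b) check the first one or two blocks separately, since there $k_j\ge i$ and the $k/i$ bound is vacuous; their contribution is $O(1)$ because $\risk_i(P^*,\m{p}_k)$ is uniformly bounded for bounded $k$, so this is harmless but worth stating.
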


\subsubsection{Comparison of the Switch Distribution to Other Estimators}

To return to the question of choosing the number of histogram bins, we
will now first compare the switch distribution to the minimax optimal
model selection criterion $\delta$, which selects $\ceil{n^{1/3}}$ bins.
We will then also compare it to Bayes factors model selection and
Bayesian model averaging.

Although $\delta$ achieves the minimax convergence rate in Ces\`aro
mean, it has two disadvantages compared to the switch distribution: The
first is that, in contrast to the switch distribution, $\delta$ is
inconsistent. For example, if $X_1, X_2, \ldots$ are i.i.d.\ according
to the uniform distribution, then $\delta$ still selects $\lceil
n^{1/3} \rceil$ bins, while  model selection based on 
$\Pswitch$ will correctly select the 1-bin histogram for all large
$n$. Experiments with simulated data confirm that $\Pswitch$ already prefers the 1-bin histogram
at quite small sample sizes. 
% TIM TODO: Eigenlijk
% moeten we consistency van de switch distribution voor histogrammen
%formeel bewijzen.
% PETER: dat laten we lekker zitten 
The other disadvantage is that if we are lucky enough to be in a
scenario where $P^*$ actually allows a \emph{faster} than the minimax
convergence rate by letting the number of bins grow as $n^\gamma$ for
some $\gamma \neq \frac{1}{3}$, the switch distribution would be able to
take advantage of this whereas $\delta$ cannot. Our experiments with
simulated data confirm that, if $P^*$ has a sufficiently smooth
density, then it predictively outperforms $\delta$ by a wide margin.  

To achieve consistency one might also construct a Bayesian estimator
based on a prior distribution on the number of bins. However,
\cite[Theorem~2.4]{yu1992} suggests that Bayesian model averaging
does \emph{not} achieve the same rate\footnote{In the left-hand side of
(iii) in Theorem~2.4 of \citep{yu1992} the division by $n$ is missing.
(See its proof on p.~203 of that paper.)}, but a rate of order
$n^{-2/3}(\log n)^{2/3}$ instead, which is equivalent to the statement
that
\begin{equation}
\label{eq:bayessup}
  \sup_{P^* \in \model^*} \sum_{i=1}^n \risk_i(P^*,\Pbma)
    \asymp n^{1/3}(\log n)^{2/3}.
\end{equation}
Bayesian model averaging will typically predict better than the single
model selected by Bayes factor model selection. We should therefore
not expect Bayes factor model selection to achieve the minimax rate
either. While we have no formal proof that standard Bayesian model
averaging behaves like (\ref{eq:bayessup}), we have also performed
numerous empirical experiments which all confirm that Bayes performs
significantly worse than the switch distribution. We will report on
these and the other aforementioned experiments elsewhere.

What causes this Bayesian inefficiency? Our explanation is that, as the
sample size increases, the catch-up phenomenon occurs at each time
that switching to a larger number of bins is required. Just like in the
shaded region in Figure~\ref{fig:markovexample}, this causes Bayes to
make suboptimal predictions for a while after each switch. This
explanation is supported by the fact that the switch distribution, which
has been designed with the catch-up phenomenon in mind, does not suffer
from the same inefficiency, but achieves the minimax rate in Ces\`aro
mean.

% PETER: ONDERSTAANDE IS 'TE VEEL'
%The switch distribution is both consistent and achieves the minimax rate
%in Ces\`aro mean. In addition, as shown by Lemma~\ref{lem:switchvsbma},
%it never predicts much worse than Bayesian model averaging. To select
%the number of bins in a histogram with equal-width bins, we therefore
%recommend using the switch distribution over the alternatives considered
%in this section.
%
\section{Risk Convergence Rates, Advanced Results}
\label{sec:nonparametricb}
In this section we develop the theoretical results needed to prove
minimax convergence results for the switch distribution. First, in
Section~\ref{sec:oracle}, we define the convenient concept of an
oracle and show that the switch distribution converges at least as
fast as oracles that do not switch too often as the sample size
increases. In order to extend the oracle results to convergence rate
results, it is useful to restrict ourselves to model classes
$\model^*$ of the ``standard'' type that is usually considered in the
nonparametric literature. Essentially, this amounts to imposing an
independence assumption and the assumption that the convergence rate is
of order at least $n^{- \gamma}$ for some $\gamma <1$.  In
Section~\ref{sec:standard} we define such standard nonparametric
classes formally, we explain in detail how their Ces\`aro convergence
rate relates to their standard convergence rate, and we provide our
main lemma, which shows that, for standard nonparametric classes,
$\Pswitch$ achieves the minimax rate under a rather weak condition.
In Section~\ref{sec:exponential} and~\ref{sec:linear} we apply this
lemma to show that $\Pswitch$ achieves the minimax rates in some
concrete nonparametric settings: density estimation based on
exponential families and linear regression.  Finally,
Section~\ref{sec:parametric} briefly considers the parametric case.

To get an intuitive idea of how the switch distribution avoids the
catch-up phenomenon, it is essential to look
at the proofs of some of the results in this section, in particular Lemma~\ref{lem:simulateoracle},
\ref{lem:rateofconvergence}, \ref{standardnonparametric},
\ref{lem:linreg} and~\ref{lem:parametric}. Therefore,
the proofs of these lemmas have been kept in the main text. 
\subsection{Oracle Convergence Rates}
\label{sec:oracle}
Let $\model^*$, $\model_1$, $\model_2$, $\ldots$ and $\m{P}_1$,
$\m{P}_2$, $\ldots$  be as in Section~\ref{sec:modelclasses}. As a technical
tool, it will be useful to compare the cumulative risk of the
switch distribution to that of an \emph{oracle} prediction strategy that
knows the true distribution $P^* \in \model^*$, but is restricted to
switching between $\m{P}_1$, $\m{P}_2$, $\ldots$.
Lemma~\ref{lem:simulateoracle} below gives an upper bound on the
additional cumulative risk of the switch distribution compared to such
an oracle. To bound the rate of convergence in Ces\`aro mean for various
nonparametric model classes we also formulate
Lemma~\ref{lem:rateofconvergence}, which is a direct consequence of
Lemma~\ref{lem:simulateoracle}. Lemma~\ref{lem:rateofconvergence} will serve as a basis for further rate of convergence results in
Sections~\ref{sec:standard}--\ref{sec:linear}.
\begin{definition}[Oracle]
\label{def:oracle}
An oracle is a function $\oracle:\model^*\times
\Union_{n=0}^\infty \samplespace^n \to\posints$ that, for all $n \in
\naturals$, given not only the observed data $x^n \in \samplespace^n$,
but also the true distribution $P^*\in\model^*$, selects a model index,
$\oracle(P^*,x^n)$, with the purpose of predicting $X_{n+1}$ by
$P_\oracle(X_{n+1} \mid x^n) \equiv \m{P}_{\oracle(P^*,x^n)}(X_{n+1}
\mid x^n)$.
\end{definition}
If $\oracle(P^*,x^n)=\oracle(P^*,y^n)$ for any $x^n,y^n \in
\samplespace^n$ (i.e.\ the oracle's choices do not depend on $x^n$, but
only on $n$), we will say that oracle $\oracle$ \emph{does not look at
the data} and write $\oracle(P^*,n)$ instead of $\oracle(P^*,x^n)$ for
some arbitrary $x^n \in \samplespace^n$.

Suppose $\oracle$ is an oracle and $X_1$, $X_2$, $\ldots$ are distributed
according to $P^* \in \model^*$. If $X^{n-1} = x^{n-1}$, then
$\oracle(P^*,x^0),\ldots,\oracle(x^{n-1})$ is the sequence of model
indices chosen by $\oracle$ to predict $X_1$, $\ldots$, $X_n$. We may
split this sequence into segments where the same model is chosen. Let us
define $m_\oracle(n)$ as the maximum number of such distinct segments
over all $P^* \in \model^*$ and all $x^{n-1} \in \samplespace^{n-1}$.
That is, let
%
%% Voor Stevens proefschrift: Volgens mij zat hier toch nog een
%  off-by-one error in.
\begin{equation}\label{eq:mn}
  m_\oracle(n) = \max_{P^* \in \model^*}
  \max_{x^{n-1} \in \samplespace^{n-1}} |\{1 \leq i \leq n-1 :
  \oracle(P^*,x^{i-1}) \neq \oracle(P^*,x^i)\}| + 1,
\end{equation}
where $x^i$ denotes the prefix of $x^{n-1}$ of length $i$. (The maximum
always exists, because for any $P^*$ and $x^{n-1}$ the number of
segments is at most $n$.)

The following lemma expresses that any oracle $\oracle$ that does not
select overly complex models, can be approximated by the
switch distribution with a maximum additional risk that depends on
$m_\oracle(n)$, its maximum number of segments.  We will typically be
interested in oracles $\oracle$ such that this maximum is small in
comparison to the sample size, $n$. The lemma is a tool in establishing
the minimax convergence rates of $\Pswitch$ that we consider in the
following sections.

\begin{lemma}[Oracle Approximation Lemma]
  \label{lem:simulateoracle}
  Let $\Pswitch$ be the switch distribution, defined with respect to a
  sequence of estimators $\m{P}_1,\m{P}_2,\ldots$ as introduced above,
  with any prior $\switchprior$ that satisfies the conditions in
  \eqref{eq:mmprior} and let $P^*\in\model^*$. Suppose $g : \posints \to
  \reals$ is a positive, nondecreasing function and $\oracle$ is an
  oracle such that
  \begin{equation}
    \oracle(P^*,x^{i-1}) \leq g(i)
    \label{eqn:simplemodels}
  \end{equation}
  for all $i \in \posints$, all $x^{i-1} \in \samplespace^{i-1}$. Then
  \begin{equation}
    \sum_{i=1}^n \risk_i(P^*,\Pswitch) = \sum_{i=1}^n \risk_i(P^*,P_\oracle) +
    O\big(m_\oracle(n)\cdot(\log n + \log g(n))\big),
    \label{eqn:codingswitchpoints}
  \end{equation}
  where the constants in the big-O notation depend only on the constants
  implicit in \eqref{eq:mmprior}.
\end{lemma}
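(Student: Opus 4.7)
The plan is to combine the identity \eqref{eq:redundancyrisk}, which equates cumulative risk with a single KL divergence, with a pointwise lower bound on $\pswitch$ via a data-dependent switch parameter that mimics the oracle on each individual sequence. By \eqref{eq:redundancyrisk} applied to $\Pswitch$ and to $P_\oracle$,
\begin{equation*}
\sum_{i=1}^n \risk_i(P^*,\Pswitch) - \sum_{i=1}^n \risk_i(P^*,P_\oracle)
= E_{X^n\sim P^*}\left[\log \frac{\prod_{i=1}^n \m{p}_{\oracle(P^*,X^{i-1})}(X_i\mid X^{i-1})}{\pswitch(X^n)}\right],
\end{equation*}
so it is enough to prove the \emph{pointwise} bound, valid for every $x^n \in \samplespace^n$,
\begin{equation*}
-\log \pswitch(x^n) \;\leq\; -\log \prod_{i=1}^n \m{p}_{\oracle(P^*,x^{i-1})}(x_i\mid x^{i-1}) + O\bigl(m_\oracle(n)(\log n + \log g(n))\bigr).
\end{equation*}

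For each $x^n$, I would construct a parameter $\switchpar^*(x^n) \in \switchpars$ that tracks the oracle's choices: setting $k_i = \oracle(P^*,x^{i-1})$, place switch-points precisely at those indices $i$ where $k_i \neq k_{i-1}$, and label the $i$-th segment with the model index $k_i$. By the definition of $m_\oracle(n)$ in \eqref{eq:mn}, the resulting number of segments $m = m(\switchpar^*(x^n))$ satisfies $m \leq m_\oracle(n)$, and the hypothesis \eqref{eqn:simplemodels} ensures every chosen index is at most $g(n)$. By construction, $q_{\switchpar^*(x^n)}(x^n) = \prod_{i=1}^n \m{p}_{k_i}(x_i \mid x^{i-1})$.

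Next I would bound $-\log \pi(\switchpar^*(x^n))$ using the factorisation \eqref{eq:prior} together with the three rate conditions \eqref{eq:mmprior}. The factor $-\log\pim(m)$ contributes $O(m)$; each of the $m$ model-label factors contributes $-\log\pik(k_i) = O(\log g(n))$; and each of the $m-1$ switch-point factors is at most $-\log\pit(t_i) = O(\log n)$, since $\pit(t_i \mid t_i > t_{i-1}) = \pit(t_i)/\sum_{t>t_{i-1}}\pit(t) \geq \pit(t_i)$. Summing yields
\begin{equation*}
-\log \pi(\switchpar^*(x^n)) \;=\; O\bigl(m\,(\log n + \log g(n))\bigr) \;\leq\; O\bigl(m_\oracle(n)(\log n + \log g(n))\bigr),
\end{equation*}
with constants depending only on those implicit in \eqref{eq:mmprior}. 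Since $\pswitch(x^n) = \sum_{\switchpar} q_{\switchpar}(x^n)\pi(\switchpar) \geq q_{\switchpar^*(x^n)}(x^n)\,\pi(\switchpar^*(x^n))$, the desired pointwise bound follows, and taking expectations completes the proof.

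The only conceptual subtlety is that for different realisations $x^n$ the oracle may switch at different times, so the witness $\switchpar^*$ genuinely depends on $x^n$; this is legitimate because the sum defining $\pswitch$ runs over \emph{all} switch parameters, so one may select a different one for each realisation. The remaining work is essentially bookkeeping: verifying the bound on the conditional prior $\pit(\cdot\mid\cdot)$ and checking that the three $O(\cdot)$ contributions from $\pim$, $\pik$, and $\pit$ combine to the stated rate. No step appears to present a serious technical obstacle once the pointwise reduction is in place.
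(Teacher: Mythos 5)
Your proof is correct and follows essentially the same route as the paper: reduce via \eqref{eq:redundancyrisk} to a pointwise code-length comparison, exhibit a (data-dependent) switch parameter that mimics the oracle on $x^n$ and has at most $m_\oracle(n)$ segments, lower-bound $\pswitch(x^n)$ by the corresponding single term in the mixture, and bound $-\log\pi(\switchpar^*)$ via the factorisation \eqref{eq:prior} together with the rate conditions \eqref{eq:mmprior} (including the observation that $\pit(t_i\mid t_i>t_{i-1})\geq\pit(t_i)$). The paper's proof is essentially identical modulo notation.
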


\begin{proof}
Using~\eqref{eq:redundancyrisk} we can rewrite
  \eqref{eqn:codingswitchpoints} into the equivalent claim
  \begin{equation}\label{eqn:equivalent}
    E\left[ \log \frac{p_\oracle(X^n)}{\pswitch(X^n)} \right]
      = O\big(m_\oracle(n)\cdot(\log n + \log g(n))\big),
  \end{equation}
  which we will proceed to prove. For all $n$, $x^n\in\samplespace^n$,
  there exists an $\switchpar\in\switchpars$ with $m(\switchpar)\leq
  m_\oracle(n)$ and $t_{m(\switchpar)}(\switchpar) < n$ that selects the
  same sequence of models as $\oracle$ to predict $x^n$, so that
  $q_\switchpar(x^i\mid x^{i-1})= p_{\oracle_i}(x^i\mid x^{i-1})$ for
  $1\leq i\leq n$. Consequently, we can bound
  \begin{equation}
    \pswitch(x^n)~=~\sum_{\switchpar' \in \switchpars} q_{\switchpar'}(x^n)
        \cdot \switchprior(\switchpar')
        ~\geq~ q_{\switchpar}(x^n)
        \switchprior(\switchpar)
        ~=~ p_\oracle(x^n)
        \switchprior(\switchpar).
    \label{eqn:simoracle}
  \end{equation}
  By assumption \eqref{eqn:simplemodels} we have that $\oracle$, and
  therefore $\switchpar$, never selects a model $\model_k$ with index
  $k$ larger than $g(i)$ to predict the $i$\kern.3pt th outcome.
  Together with $\eqref{eq:mmprior}$ and the fact that $g$ is
  nondecreasing, this implies that
  \begin{align}
    -\log \switchprior(\switchpar) 
      &= -\log \pim(m(\switchpar)) -\log \pik(k_1(\switchpar)) +
        \sum_{j=2}^{m(\switchpar)} -\log \pit\big(t_j(\switchpar)\mid
        t_{j-1}(\switchpar)\big) -\log \pik(k_j(\switchpar)) \notag \\
      &= O(m(\switchpar)) + \sum_{j=1}^{m(\switchpar)} O\big(\log
        t_j(\switchpar)\big) + O\big(\log k_j(\switchpar)\big) \notag \\
      &= O(m(\switchpar)) + \sum_{j=1}^{m(\switchpar)}
        O\big(\log t_j(\switchpar)\big)
        + O\big(\log g(t_j(\switchpar)+1)\big) \notag \\
      &= O(m(\switchpar)) + \sum_{j=1}^{m(\switchpar)}
        O(\log n) + O\big(\log g(n)\big)
      = O\big(m_\oracle(n)\cdot(\log n + \log g(n))\big),
    \label{eqn:swpriorbound}
  \end{align}
  where the constants in the big-O in the final expression depend only
  on the constants in~\eqref{eq:mmprior}. Together \eqref{eqn:simoracle}
  and \eqref{eqn:swpriorbound} imply \eqref{eqn:equivalent}, which was
  to be shown.
\end{proof}
%
%% Voor Stevens proefschrift: Heb een foutje in het volgende bewijs
% gefixt (het is essentieel dat $t_m < n$). Heb de stelling ook
% gegeneraliseerd, maar dat hoeft op zich niet.

From an information theoretic point of view, the additional risk of the
switch distribution compared to oracle $\oracle$ may be interpreted as
the number of bits required to encode how the oracle switches between
models.

In typical applications, we use oracles that achieve the minimax rate,
and that are such that the number of segments $m_{\oracle}(n)$ is
logarithmic in $n$, and $\oracle$ never selects a model index larger
than $n^{\tau}$ for some $\tau > 0$ (typically, $\tau \leq 1$ but some
of our results allow larger $\tau$ as well). By
Lemma~\ref{lem:simulateoracle}, the additional risk of the switch
distribution over such an oracle is $O((\log n)^2)$. In nonparametric
settings, the minimax rate satisfies $\Gmmfix(n) \succeq n^{1- \gamma}$
for some $\gamma < 1$. This indicates that, for large $n$, the
additional risk of the switch distribution over a sporadically
switching oracle becomes negligible. This is the basic idea that
underlies the nonparametric minimax convergence rate results of
Section~\ref{sec:standard}-\ref{sec:linear}. Rather than using
Lemma~\ref{lem:simulateoracle} directly to prove such results, it is
more convenient to use its straightforward extension
Lemma~\ref{lem:rateofconvergence} below, which  bounds the
worst-case cumulative risk of the switch distribution in terms of the
worst-case cumulative risk of an oracle, $\oracle$:
\begin{equation}
%% Voor Stevens proefschrift: er miste hier een komma.
  G_\oracle(n) = \sup_{P^*\in\model^*}\sum_{i=1}^n \risk_i(P^*,P_\oracle).
  \label{eq:gsig}
\end{equation}

\begin{lemma}[Rate of Convergence Lemma]
\label{lem:rateofconvergence}
  Let $\Pswitch$ be the switch distribution, defined with respect to a
  sequence of estimators $\m{P}_1,\m{P}_2,\ldots$ as above, with any
  prior $\pi$ that satisfies the conditions in \eqref{eq:mmprior}. Let
  $f : \posints \to \reals$ be a nonnegative function and let
  $\modelclass$ be a set of distributions on $X^\infty$. Suppose there
  exist a positive, nondecreasing function $g : \posints \to \reals$, an
  oracle $\oracle$, and constants $c_1,c_2 \geq 0$ such that
  \begin{enumerate}[(i)]
  \item 
$\displaystyle\oracle(P^*,x^{i-1}) \leq g(i)$
  \quad (for all $i \in \posints$, $x^{i-1} \in \samplespace^{i-1}$, and $P^*
  \in \model^*$),
  \label{cond:canapplypreviouslemma}
  \item 
%$\displaystyle\limsup_{n \to \infty} \frac{m_\oracle(n)\big(\log n + \log
%  g(n)\big)}{f(n)} \leq c_2$, 
$ m_\oracle(n)\big(\log n + \log  g(n)\big) \preceq_{c_2} f(n)$\label{cond:ii}
  \item 
%$\displaystyle\limsup_{n \to \infty} \frac{G_\oracle(n)}{f(n)} \leq
% %c_1$.
$ G_\oracle(n) \preceq_{c_1} f(n)$ \label{cond:iii}
  \end{enumerate}
  Then there exists a constant $c_3 > 0$ such that
$\Gsw(n) \preceq_{c_1 + c_2 \cdot c_3} f(n)$.
%  %
%  \begin{equation}
%    \limsup_{n \to \infty} \frac{\Gsw(n)}{f(n)} \leq c_1 + c_2\cdot c_3.
%  \end{equation}
\end{lemma}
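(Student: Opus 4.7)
The plan is to derive Lemma~\ref{lem:rateofconvergence} as an essentially mechanical consequence of Lemma~\ref{lem:simulateoracle} (the Oracle Approximation Lemma), combined with an asymptotic arithmetic step to merge the two $\preceq$-bounds.

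First I would fix an arbitrary $P^* \in \model^*$. Condition~(\ref{cond:canapplypreviouslemma}) is exactly the hypothesis \eqref{eqn:simplemodels} of Lemma~\ref{lem:simulateoracle}, and the prior $\pi$ is assumed to satisfy \eqref{eq:mmprior}, so the Oracle Approximation Lemma applies and yields
\begin{equation*}
  \sum_{i=1}^n \risk_i(P^*,\Pswitch) \;=\; \sum_{i=1}^n \risk_i(P^*,P_\oracle) \;+\; O\bigl(m_\oracle(n)(\log n + \log g(n))\bigr),
\end{equation*}
where the hidden constant depends only on the prior $\pi$ and not on $P^*$. Since this bound is uniform in $P^*$, I can take the supremum of both sides over $\model^*$. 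The supremum of the left-hand side is $\Gsw(n)$, the supremum of the first term on the right is $G_\oracle(n)$ (by definition \eqref{eq:gsig}), and $m_\oracle(n)$ as defined in \eqref{eq:mn} is already a worst-case quantity over $P^*$. This gives
\begin{equation*}
  \Gsw(n) \;\leq\; G_\oracle(n) \;+\; c_3 \cdot m_\oracle(n)\bigl(\log n + \log g(n)\bigr)
\end{equation*}
for all sufficiently large $n$, where $c_3 > 0$ is the constant implicit in the big-O of Lemma~\ref{lem:simulateoracle}.

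Next I would combine this inequality with hypotheses~(\ref{cond:ii}) and~(\ref{cond:iii}). By condition~(\ref{cond:iii}), for every $\epsilon > 0$ there exists $n_1$ such that $G_\oracle(n) \leq (1+\epsilon) c_1 f(n)$ for $n \geq n_1$. By condition~(\ref{cond:ii}), for every $\epsilon > 0$ there exists $n_2$ such that $m_\oracle(n)(\log n + \log g(n)) \leq (1+\epsilon) c_2 f(n)$ for $n \geq n_2$. For $n \geq \max(n_1,n_2)$, adding these gives
\begin{equation*}
  \Gsw(n) \;\leq\; (1+\epsilon)(c_1 + c_2 c_3) f(n),
\end{equation*}
which is exactly the definition of $\Gsw(n) \preceq_{c_1 + c_2 c_3} f(n)$.

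There is no real obstacle here; the only subtlety is the asymptotic bookkeeping, namely that $\preceq_c$ is additive in the sense that $g_1 \preceq_{c_1} f$ and $g_2 \preceq_{c_2} f$ imply $g_1 + g_2 \preceq_{c_1 + c_2} f$. This follows immediately from the definition by taking the later of the two thresholds $n_0$. The constant $c_3$ is extracted from the big-O of Lemma~\ref{lem:simulateoracle} and depends only on the constants implicit in \eqref{eq:mmprior}, so it is a genuine constant independent of $n$ and $P^*$, which is what makes the combination legitimate.
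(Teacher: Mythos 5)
Your proposal is correct and takes essentially the same route as the paper: apply the Oracle Approximation Lemma (Lemma~\ref{lem:simulateoracle}), observe that its bound is uniform over $P^*$ so the supremum can be taken, and then combine Conditions~(\ref{cond:ii}) and~(\ref{cond:iii}) via the additivity of the $\preceq_c$ relation. The paper phrases the final step in terms of a single $\limsup$ inequality rather than explicit $\epsilon$-$n_0$ bookkeeping, but the content is identical; your version is, if anything, slightly more explicit about why the supremum over $\model^*$ may be taken and why $c_3$ is independent of $P^*$ and $n$.
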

\begin{proof}
By Lemma~\ref{lem:simulateoracle} we have that $\Gsw(n) =
  G_\oracle(n) + O\big(m_\oracle(n)\cdot(\log n + \log g(n))\big)$.
  Therefore there exists a constant $c_3 > 0$ such that
  \begin{equation*}
    \limsup_{n \to \infty} \frac{\Gsw(n)}{f(n)}
      \leq
    \limsup_{n \to \infty} \frac{G_\oracle(n) + 
      c_3 \cdot m_\oracle(n)\cdot(\log n + \log g(n))}{f(n)}
    \leq c_1 + c_2 \cdot c_3,
  \end{equation*}
  where the second inequality follows from Conditions~\ref{cond:ii} en
  \ref{cond:iii}. 
\end{proof}
Note that Condition~\ref{cond:ii} is satisfied with $c_2 = 0$ iff
$m_\oracle(n)\big(\log n + \log g(n)\big) = o(f(n))$.
In the following subsections, we prove that $\Pswitch$ achieves
$\Gmmfix(n)$ relative to various parametric and nonparametric model
classes $\model^*$ and $\model$. The proofs are invariably based on
applying Lemma~\ref{lem:rateofconvergence}. Also, the proof of
Theorem~\ref{thm:switchrsy} is based on Lemma~\ref{lem:rateofconvergence}. The general idea is to
apply the lemma with $f(n)$ equal to the summed minimax risk 
$\Gmmfix(n)$ (see~\eqref{eq:mmfixed}). If, for a given model class
$\model^*$, one can exhibit an oracle $\oracle$ that only switches
sporadically (Condition (ii) of the lemma) and that achieves
$\Gmmfix(n)$ (Condition (iii)), then the lemma implies that $\Pswitch$
achieves the minimax rate as well.
\subsection{Standard Nonparametric Model Classes}
\label{sec:standard}
 In this section we define ``standard nonparametric model classes'',
and we present our main lemma, which shows that, for such classes,
$\Pswitch$ achieves the minimax rate under a rather weak condition.
Standard nonparametric classes are defined in terms of the (standard,
non-Ces\`aro) minimax rate. Before we give a precise definition of
standard nonparametric, it is useful to compare the standard rate to
the Ces\`aro-rate. For given $\model^*$, the standard minimax rate is
defined as
\begin{equation}
  \label{eq:mmstandard}
  \gmm(n) = \inf_{\m{P}} \sup_{P^* \in \model^*} \risk_n(P^*, \m{P}),
\end{equation}
where the infimum is over all possible estimators, as defined in
Section~\ref{sec:preliminaries}; $\m{P}$  is not required to lie in
$\model^*$ or $\model$. If an estimator achieves (\ref{eq:mmstandard})
to within a constant factor, we say that it converges at the
\emph{minimax optimal rate}. Such an estimator will also achieve the
minimax cumulative risk for varying $P^*$, defined as
\begin{equation}
  \label{eq:mmvar}
  \Gmmvar(n) = \sum_{i=1}^n \gmm(i) = \inf_{\m{P}} 
  \ \sum_{i=1}^n \sup_{P^* \in \model^*} \risk_i(P^*, \m{P}),
\end{equation}
where the infimum is again over all possible estimators.

In many nonparametric density estimation and
regression problems, the minimax risk $\gmm(n)$ is of order $n^{-
\gamma}$ for some $1/2 < \gamma <1$ (see, for example,
\citep{YangB98,YangB99,BarronS91}), i.e.\ $\gmm(n) \asymp n^{- \gamma}$,
where $\gamma$ depends on the smoothness assumptions on the densities in
$\model^*$. In this case, we have
\begin{equation}
  \label{eq:sumint}
  \Gmmvar(n) \asymp \sum_{i=1}^n i^{-\gamma}
             \asymp \int_1^n x^{-\gamma}
             \dif x \asymp n^{1 - \gamma}.
\end{equation}
Similarly, in standard parametric problems, the minimax risk $\gmm(n)
\asymp 1/n$. In that case, analogously to (\ref{eq:sumint}), we see that
the minimax cumulative risk $\Gmmvar$ is of order $\log n$.

Note, however, that our previous result for histograms (and, more generally, all results we are about to present), is based on a scenario where $P^*$, while
allowed to depend on $n$, is kept fixed over the terms in the sum from
$1$ to $n$.  Indeed, in Theorem~\ref{thm:switchrsy} we showed that
$\Pswitch$ achieves the minimax rate  $\Gmmfix(n)$ as defined in
(\ref{eq:mmfixed}).  Comparing to (\ref{eq:mmvar}), we see that the
supremum is moved outside of the sum.  Fortunately, $\Gmmfix$ and
$\Gmmvar$ are usually of the same order: in the parametric case, e.g.
$\model^* = \bigcup_{k \leq k^*} \model_k$, both $\Gmmfix$ and $\Gmmvar$
are of order $\log n$.  For $\Gmmvar$, we have already seen this.  For
$\Gmmfix$, this is a standard information-theoretic result, see for
example \citep{ClarkeB90}. In a variety of  standard  nonparametric
situations that are studied in the literature, we have $\Gmmvar(n)
\asymp \Gmmfix(n)$ as well. Before showing this, we first define what we
mean by ``standard nonparametric situations'':
\begin{definition}[Standard Nonparametric]
\label{def:standardnonpar}
We call a model class $\model^*$ \emph{standard
nonparametric} if
\begin{enumerate}
  \item For any $P^* \in \model^*$, the random variables $X_1$, $X_2$,
  $\ldots$ are independent and identically distributed whenever
  $X^\infty \sim P^*$, and $P^*(X_1)$ has a density (relative to the
  Lebesgue or counting measure); and
  \item The minimax convergence rate, $\gmm(n)$, relative to $\model^*$
    does not decrease too fast in the sense that, for some $0 < \gamma < 1$, some nondecreasing
function $h_0(n) = o(n^{\gamma})$, it holds that 
    \begin{equation}
      \label{eq:hgrowth}
      \gmm(n) \asymp n^{-\gamma} h_0(n).
    \end{equation} 
\end{enumerate}
\end{definition}
Examples of standard nonparametric $\model^*$ include cases with
$\gmm(n) \asymp n^{- \gamma}$ (in that case $h_0(n) \equiv 1$), or,  more generally, $\gmm(n) \asymp n^{-
\alpha} (\log n)^{\beta}$ for some $\alpha \in (0,1), \beta \in
\reals$ (take $\gamma > \alpha$ and $h_0(n) = n^{\gamma - \alpha}
(\log n)^\beta$; note that $\beta$ may be negative); see
\citep{YangB99}. While in Lemma~\ref{lem:rateofconvergence} there are neither
independence nor convergence rate assumptions, in the next section we
develop extensions of Lemma~\ref{lem:rateofconvergence} and
Theorem~\ref{thm:switchrsy} that do restrict attention to such
``standard nonparametric'' model classes.

\begin{proposition}
\label{prop:varfix}
  For all standard nonparametric model classes, it holds that
  $\Gmmfix(n) \asymp \Gmmvar(n)$.
\end{proposition}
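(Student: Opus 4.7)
The plan is to prove both directions of the $\asymp$ relation separately.

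The easier direction, $\Gmmfix(n) \leq \Gmmvar(n)$, follows from the inequality that the supremum of a sum is bounded by the sum of suprema. Specifically, for any estimator $\m{P}$, one has $\sup_{P^*} \sum_i \risk_i(P^*, \m{P}) \leq \sum_i \sup_{P^*} \risk_i(P^*, \m{P})$. Taking $\inf_{\m{P}}$ on both sides and noting that on the right each summand depends on $\m{P}$ only through the conditional $\m{P}(X_i \mid X^{i-1})$, these conditionals can be optimized independently for each $i$, so the infimum interchanges with the sum. This yields $\Gmmfix(n) \leq \sum_i \inf_{\m{P}_i} \sup_{P^*} \risk_i(P^*,\m{P}_i) = \sum_i \gmm(i) = \Gmmvar(n)$.

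For the harder direction, $\Gmmvar(n) \preceq \Gmmfix(n)$, the plan is to pass through the intermediate quantity $n\,\gmm(n)$. First I would establish $\Gmmvar(n) \asymp n \gmm(n)$ by directly estimating $\sum_{i=1}^n \gmm(i) \asymp \sum_{i=1}^n i^{-\gamma} h_0(i)$ using the standard nonparametric form. The upper bound is obtained by factoring out $h_0(n)$ (using that $h_0$ is nondecreasing), giving $\sum_i i^{-\gamma} h_0(i) \leq h_0(n) \sum_i i^{-\gamma} \asymp h_0(n) n^{1-\gamma} \asymp n \gmm(n)$; the lower bound follows by restricting to $i > n/2$ and using that $\gmm$ is essentially nonincreasing in the standard nonparametric regime (a consequence of $h_0(n) = o(n^{\gamma})$). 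Second, I would show $\Gmmfix(n) \succeq n \gmm(n)$ by a Bayes-risk lower-bound argument: by the minimax theorem, there is a least-favorable prior $\pi_n^*$ on $\model^*$ at sample size $n$ with Bayes risk $B_n(\pi_n^*) := \inf_{\m{P}} E_{\pi_n^*} \risk_n(P^*, \m{P})$ of order $\gmm(n)$. Because the $X_i$ are i.i.d.\ under each $P^* \in \model^*$, the Bayes risk is nonincreasing in the sample size, so $B_i(\pi_n^*) \geq B_n(\pi_n^*) \succeq \gmm(n)$ for every $i \leq n$. Using the same interchange of infimum and sum as above, $\Gmmfix(n) \geq \inf_{\m{P}} E_{\pi_n^*}\sum_i \risk_i(P^*, \m{P}) = \sum_i B_i(\pi_n^*) \succeq n \gmm(n) \asymp \Gmmvar(n)$, as desired.

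The main obstacle is the appeal to the minimax theorem in the second step: one must justify the existence of a prior $\pi_n^*$ whose Bayes risk at sample size $n$ matches $\gmm(n)$ up to a constant. Under the general ``standard nonparametric'' hypothesis this requires either checking appropriate regularity (e.g.\ a topological/compactness hypothesis on $\model^*$ allowing Sion's minimax theorem to apply) or, as is more typical in practice, reducing to a finite ``hard'' subfamily of $\model^*$ via Fano's or Assouad's inequality. Either route supplies the required prior, and the i.i.d.\ stationarity of the problem is what lets a single such prior remain simultaneously hard at all sample sizes in $(n/2, n]$.
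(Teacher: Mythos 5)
Your proposal matches the paper on the easy half ($\Gmmfix(n)\le\Gmmvar(n)$ via sup-of-sums $\le$ sum-of-sups, plus interchanging $\inf$ with the sum over independent per-step predictions) and on the reduction $\Gmmvar(n)\asymp n\,\gmm(n)$ (factoring out the nondecreasing $h_0$ and comparing $\sum i^{-\gamma}$ to $n^{1-\gamma}$). The divergence, and the gap, is in how you get $\Gmmfix(n)\succeq n\,\gmm(n)$. You route this through the dual side: a (near-)least-favorable prior $\pi_n^*$ on $\modelclass$ with Bayes risk $B_n(\pi_n^*)\succeq\gmm(n)$, then use i.i.d.ness to argue $B_i(\pi_n^*)\ge B_n(\pi_n^*)$ for $i\le n$ and sum. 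The monotonicity step and the $\inf$-sum interchange are fine, but the existence of such a prior is exactly a minimax duality statement, and nothing in Definition~\ref{def:standardnonpar} — which only asserts i.i.d.ness and $\gmm(n)\asymp n^{-\gamma}h_0(n)$ — supplies the compactness/convexity/semicontinuity hypotheses under which Sion-type minimax theorems apply, nor the structure needed for a Fano/Assouad reduction. You flag this obstacle yourself but do not close it; as written your argument proves the proposition only for model classes satisfying an additional (unstated) duality hypothesis, not ``for all standard nonparametric model classes.''

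The paper sidesteps duality entirely with a constructive, primal-side device (Proposition~\ref{prop:cesaro}): take any estimator $\Pmm$ whose cumulative risk achieves $\Gmmfix(n)$ up to a constant, and form the Ces\`aro-averaged predictor
\begin{equation*}
\Pcesaro(X_n\mid x^{n-1}) \;:=\; \frac{1}{n}\sum_{i=1}^n \Pmm(X_n\mid x^{i-1}).
\end{equation*}
Convexity of KL divergence in its second argument (Jensen) together with i.i.d.ness under $P^*$ gives $\risk_n(P^*,\Pcesaro)\le\frac{1}{n}\sum_{i=1}^n\risk_i(P^*,\Pmm)$ for every $P^*$; and since $\Pcesaro$ is one admissible estimator, $\gmm(n)\le\sup_{P^*}\risk_n(P^*,\Pcesaro)\preceq n^{-1}\Gmmfix(n)$. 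This yields $n\,\gmm(n)\preceq\Gmmfix(n)$ directly, under no hypotheses beyond the definition of standard nonparametric, and it simultaneously establishes Proposition~\ref{prop:yang}. To repair your proof you should either adopt this Ces\`aro-averaging argument, or add an explicit hypothesis guaranteeing an (asymptotically) least-favorable prior.
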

Summarizing, both in standard parametric and nonparametric cases,
$\Gmmfix$ and $\Gmmvar$ are of comparable size. Therefore,
Lemma~\ref{lem:simulateoracle} and \ref{lem:rateofconvergence} do suggest
that, both in standard parametric and nonparametric cases, $\Pswitch$
achieves the minimax convergence rate $\Gmmfix$. In particular, this
will hold if there exists an oracle $\oracle$ which achieves the
minimax convergence rate, but which, at the same time, switches only
sporadically.  However, the existence of such an oracle is often hard
to show directly. Rather than applying
Lemma~\ref{lem:rateofconvergence} directly, it is therefore often more
convenient to use Lemma~\ref{standardnonparametric} below, whose proof
is based on Lemma~\ref{lem:rateofconvergence}.
Lemma~\ref{standardnonparametric} gives a sufficient condition for
achieving the minimax rate that is easy to establish for several
standard nonparametric model classes: If there exists an oracle
$\oracle$ that achieves the minimax rate, such that all oracles
$\oracle'$ that lag a little behind $\oracle$ achieve the minimax rate
as well, then $\Pswitch$ must achieve the minimax rate as well.  Here
``lags a little behind'' means that the model chosen by $\oracle'$ at
sample size $n$ was chosen by $\oracle$ at a somewhat earlier sample
size. Formally, we fix some constants $\alpha > 1$ and $c > 0$.
Suppose that, for some oracles $\oracle$ and $\oracle'$, we have, for
all $P^* \in \model^*$, $n \in \posints$ and $x^{n-1} \in
\samplespace^{n-1}$,
$$\oracle'(P^*,x^{n-1}) \in
  \left\{\oracle(P^*,x^{i-1}) \mid i \in [n/\alpha,n] \intersection
  \naturals \right\}, $$ 
where $x^{i-1}$ denotes the prefix
  of $x^{n-1}$ of length $i-1$. In such a case we say that $\oracle'$
  {\em lags behind $\oracle$ by at most a factor of $\alpha$}. Intuitively this
  means that, at each sample size $n$, $\oracle'$ may choose any of the
  models that was chosen by $\oracle$ at sample size between $n/\alpha$
  and $n$. We call an oracle $\oracle$ {\em finite\/} relative to $\model^*$ if for all $n$, $\sup_{P^* \in \model^*} \risk_n(P^*, P_{\oracle}) < \infty$.

\begin{lemma}[Standard Nonparametric Lemma]
  \label{standardnonparametric}
  Suppose $\m{P}_1$, $\m{P}_2$, $\ldots$ are estimators and $\Pswitch$
  is the corresponding switch distribution with prior that satisfies
  \eqref{eq:mmprior}.
  Let $\model^*$ be a standard nonparametric model class. Let $\tau
  > 0$ be a constant, and let $\oracle$ be an oracle such that
  $\oracle(P^*,x^{n-1}) \leq n^\tau$ for all $P^* \in \model^*$, $n \in
  \posints$ and $x^{n-1} \in \samplespace^{n-1}$. Suppose that any
  oracle $\oracle'$ that lags behind $\oracle$ by at most a factor of
  $\alpha > 1$, is finite relative to $\model^*$, and achieves the minimax convergence rate up to a
  multiplicative constant $c > 0$:
  \begin{equation}
%    \limsup_{n \to \infty}
%      \frac{\sup_{P^* \in \model^*} \risk_n(P^*,{P}_\oracle')}
%           {\gmm(n)} \leq c.
\sup_{P^* \in \model^*} \risk_n(P^*,{P}_{\oracle'}) \preceq_{c} \gmm(n).
    \label{eqn:instantminmax}
  \end{equation}
  Then the switch distribution achieves the minimax risk in Ces\`aro mean up
  to a multiplicative constant: 
  \begin{equation}
%    \limsup_{n \to \infty}
%      \frac{\Gsw(n)}{\Gmmfix(n)} \leq c \cdot c',
    \label{eqn:switchminmax}
\Gsw(n) \preceq_{c \cdot c'} \Gmmfix(n),
  \end{equation}
  where $c' = \limsup_{n \to \infty} \frac{\Gmmvar(n)}{\Gmmfix(n)}$.
\end{lemma}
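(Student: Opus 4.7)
The natural approach is to invoke Lemma~\ref{lem:rateofconvergence} with $f(n) = \Gmmvar(n)$, applied to a suitably ``lazy'' oracle $\oracle'$ derived from the given oracle $\oracle$. The hypothesis of the lemma gives us a family of candidate oracles (all those lagging behind $\oracle$ by at most a factor $\alpha$) that achieve the minimax per-step rate, but we still need a single member of this family whose number of segments $m_{\oracle'}(n)$ is small enough for Condition~(ii) of Lemma~\ref{lem:rateofconvergence} to be verified with $c_2 = 0$.

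\textbf{Construction of $\oracle'$.} Define a sparse schedule of sample sizes $n_1 < n_2 < \cdots$ by $n_j = \lceil \alpha^{j-1}\rceil$, and let $\oracle'(P^*,x^{n-1}) = \oracle(P^*, x^{n_j - 1})$, where $n_j$ is the largest schedule point with $n_j \leq n$. Since $n_{j+1}/n_j \to \alpha$, one checks that $n_j \geq n/\alpha$ (possibly after adjusting $n_1$ by a constant factor), so $\oracle'$ lags behind $\oracle$ by at most a factor of $\alpha$ in the sense required by the hypothesis. By construction the number of segments satisfies
\begin{equation*}
  m_{\oracle'}(n) \leq |\{j : n_j \leq n\}| = O(\log_\alpha n) = O(\log n),
\end{equation*}
uniformly in $P^* \in \model^*$ and $x^{n-1}$. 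Moreover, the hypothesis $\oracle(P^*,x^{n-1}) \leq n^\tau$ gives $\oracle'(P^*, x^{n-1}) \leq n_j^\tau \leq n^\tau$, so we may take $g(n) = n^\tau$ in Lemma~\ref{lem:rateofconvergence} and obtain
\begin{equation*}
  m_{\oracle'}(n)\bigl(\log n + \log g(n)\bigr) = O\bigl((\log n)^2\bigr).
\end{equation*}

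\textbf{Bounding $G_{\oracle'}(n)$.} By the hypothesis \eqref{eqn:instantminmax} applied to $\oracle'$,
$\sup_{P^*} \risk_i(P^*, P_{\oracle'}) \preceq_c \gmm(i)$, and finiteness of $\oracle'$ takes care of the finitely many small $i$. Summing over $i$ and pulling the supremum inside the sum,
\begin{equation*}
  G_{\oracle'}(n) \;\leq\; \sum_{i=1}^n \sup_{P^*\in\model^*} \risk_i(P^*, P_{\oracle'}) \;\preceq_c\; \sum_{i=1}^n \gmm(i) \;=\; \Gmmvar(n).
\end{equation*}
Since $\model^*$ is standard nonparametric, $\gmm(n) \asymp n^{-\gamma}h_0(n) \succeq n^{-\gamma}$ with $\gamma < 1$, hence $\Gmmvar(n) \succeq n^{1-\gamma}$, and in particular $(\log n)^2 = o(\Gmmvar(n))$. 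Thus Condition~(ii) of Lemma~\ref{lem:rateofconvergence} holds with $c_2 = 0$ and $f(n) = \Gmmvar(n)$, while Condition~(iii) holds with $c_1 = c$.

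\textbf{Conclusion.} Lemma~\ref{lem:rateofconvergence} yields $\Gsw(n) \preceq_{c} \Gmmvar(n)$. Combining this with $\Gmmvar(n) \preceq_{c'} \Gmmfix(n)$, which holds by definition of $c' = \limsup_n \Gmmvar(n)/\Gmmfix(n)$ (and is finite by Proposition~\ref{prop:varfix}), we obtain $\Gsw(n) \preceq_{c\cdot c'} \Gmmfix(n)$, as claimed. The main obstacle in the argument is the design of $\oracle'$: we need a schedule coarse enough that $m_{\oracle'}(n) = O(\log n)$ (so the coding overhead of the switches is negligible compared to the minimax cumulative risk), yet fine enough that $\oracle'$ still lags behind $\oracle$ by at most the factor $\alpha$ permitted by the hypothesis. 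The geometric schedule $n_j = \lceil \alpha^{j-1}\rceil$ achieves both simultaneously.
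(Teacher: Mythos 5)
Your proof is correct and takes essentially the same route as the paper: construct a lazy oracle $\oracle'$ by freezing $\oracle$'s choice on a geometric grid $\lceil\alpha^{j-1}\rceil$, observe that $m_{\oracle'}(n) = O(\log n)$ while $\oracle'$ still lags by at most a factor $\alpha$, and invoke Lemma~\ref{lem:rateofconvergence}. The only (cosmetic) deviation is that you apply the lemma with $f(n) = \Gmmvar(n)$ and then convert to $\Gmmfix(n)$ via $c'$ at the end, whereas the paper sets $f(n) = \Gmmfix(n)$ and folds $c'$ into the constant $c_1 = c\cdot c'$ directly inside the application of the lemma.
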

\begin{proof}
  Let $t_j = \ceil{\alpha^{j-1}}-1$ for $j\in\posints$ be a sequence of
  switch-points that are exponentially far apart, and define an oracle
  $\oracle'$ as follows: For any $n \in \posints$, find $j$ such that
  $n\in[t_j+1,t_{j+1}]$ and let $\oracle'(P^*,x^{n-1}) :=
  \oracle(P^*,x^{t_j})$ for any $P^* \in \model^*$ and any $x^{n-1} \in
  \samplespace^{n-1}$. If we can apply Lemma~\ref{lem:rateofconvergence}
  for oracle $\oracle'$, with $f(n)=\Gmmfix(n)$, $g(n) = n^\tau$, $c_1 =
  c\cdot c'$ and $c_2 = 0$, we will obtain \eqref{eqn:switchminmax}. It
  remains to show that in this case conditions (i)--(iii) of
  Lemma~\ref{lem:rateofconvergence} are satisfied.
  
  As to condition \eqref{cond:canapplypreviouslemma}:
  $\oracle'(P^*,x^{n-1}) = \oracle(P^*,x^{t_j}) \leq (t_j+1)^\tau \leq
  n^\tau$. Condition \eqref{cond:ii} is also satisfied, because
  $m_{\oracle'}(n) \leq \ceil{\log_a n} + 2$, which implies
  \begin{eqnarray*}
    m_{\oracle'}(n)(\log n + \log g(n))
      &\leq& (\ceil{\log_a n}+2) (\log n + \log n^\tau) \\
      &\preceq& (\log n)^2 \preceq_0 \, n^{1 - \gamma}
      \asymp \Gmmvar(n) \asymp \Gmmfix(n)
  \end{eqnarray*}
  for some $\gamma < 1$, where we used that, because $\model^*$ is
  standard nonparametric, both \eqref{eq:sumint} and
  Proposition~\ref{prop:varfix} hold.   To verify condition
  \eqref{cond:iii}, first note that by choice of the switch-points,
  $\oracle'(P^*,x^{n-1}) = \oracle(P^*,x^{t_j})$ with $t_j+1 \in
  [n/\alpha,n]$ and therefore $\oracle'$ satisfies
  \eqref{eqn:instantminmax} by assumption. Since $\oracle'$ is finite relative to $\model^*$, this implies that ${G_{\oracle'}(n)}
  \preceq_c {\sum_{i=1}^n \gmm(i)} = \Gmmvar(n)$ and hence that
  \begin{equation*}
    G_{\oracle'}(n) \preceq_c {\Gmmfix(n)} \frac{\Gmmvar(n)}{\Gmmfix(n)}
    \preceq_{c \cdot c'} \Gmmfix(n).\qedhere
  \end{equation*}
\end{proof}
\subsection{Example: Nonparametric Density Estimation with Exponential Families}
\label{sec:exponential}
In many nonparametric situations, there exists an oracle 
$\oracle$ that achieves the minimax convergence rate, which 
only selects a model based on the sample size an not on the observed
data. This holds, for example, for
density estimation based on sequences of exponential families as
introduced by \citet{BarronS91,Sheu90} under the
assumption that the log density of the true distribution is in a
Sobolev space.  Not surprisingly, using Lemma~\ref{standardnonparametric},
we can show that $\Pswitch$ achieves the minimax rate in the
Barron-Sheu setting.

Formally, let $\samplespace = [0,1]$, let $r \geq 1$ and let $W^r_2$
be the Sobolev space of functions $f$ on ${\cal X}$ for which
$f^{(r-1)}$ is absolutely continuous and $\int (f^{(r)}(x))^2\dif x$
is finite. Here $f^{(r)}$ denotes the $r$-th derivative of $f$. Let
$\model^{*(r)}$ be the model class such that for any $P^* \in
\model^*$ the random variables $X_1, X_2, \ldots$ are i.i.d., and
$P^*(X_1)$ has a density $p^*$ such that $\log p^* \in W^r_2$. We
model $\model^{*(r)}$ using sequences of exponential families
$\model_1, \model_2, \ldots$ defined as follows. Let $\model_k = \{
p_{\theta} \mid \theta \in \reals^k \}$ be the $k$-dimensional
exponential family of densities on $[0,1]$ with
$$
p_{\theta}(x)  = p_0(x) \exp \left\{ \sum_{j=1}^k \theta_k \phi_k (x)
  - \psi_k(\theta) \right\},
$$ where $\psi_k(\theta) = \log \int p_0(x) \exp (\sum \theta_k
  \phi_k(x))\dif x$. Here $p_0$ is some reference density on $[0,1]$,
  taken with respect to Lebesgue measure. The density $p_{\theta}$ is
  extended to $X^{\infty}$ by independence. We let $\phi_1, \phi_2,
  \ldots$ be a countably infinite list of uniformly bounded, linearly
  independent functions, and we define $S_k := \{1, \phi_1, \ldots,
  \phi_k \}$. We consider three possible choices for $S_k$:
  polynomials, trigonometric series and splines of order $s$ with
  equally spaced knots. For example, we are allowed to choose $1, x,
  x^2, \ldots$ in the polynomial case, or $1$, $\cos(2 \pi x)$, $\sin
  (2 \pi x)$, $\ldots$, $\cos(2\pi (k/2) x)$, $\sin (2 \pi(k/2) x)$ in
  the trigonometric case. For precise conditions on the $\phi_1,
  \ldots, \phi_k$ that are allowed in each case, we refer to
  \citep{BarronS91}. We equip $\model_k$ with a Gaussian prior density
  $w_k(\theta)$, i.e.\ the parameters $\theta \in \reals_k$ are
  independent Gaussian random variables with mean 0 and a fixed
  variance $\sigma^2$. With each $\model_k$ we associate the Bayesian
  MAP estimator $p_{\hat{\theta}_k(x^n)}$, where $\hat{\theta}_k(x^n)
  := \arg \max_{\theta \in \reals^k} p_{k,\theta} (x^n)
  w_k(\theta)$. Define the corresponding prediction strategy $\m{P}_k$
  by its density $\m{p}_k(x_{n+1} \mid x^n) :=
  p_{\hat{\theta}_k(x^n)}(x_{n+1})$. Theorem 3.1 of \citep{Sheu90} (or
  rather its corollary on page~50 of \citep{Sheu90}) states the
  following:
\begin{theorem}[Barron and Sheu]
  \label{thm:barronsheu}
  Let $\phi_1, \phi_2, \ldots$ constitute a basis of polynomials, or
  trigonometric functions, or splines of some order $s$, satisfying
  the conditions of \citep{BarronS91}.  Let $r\geq 3$
  in the polynomial case, $r \geq 2$ in the trigonometric case, and
  $r=s$, $s \geq 2$ in the spline case. Let $k(n)$ be an arbitrary function such that $k(n) \asymp
  n^{1/(2r +1)}$. Then $\sup_{P^* \in \model^{*(r)}} \risk_n(P^*,
  \m{P}_{k(n)}) < \infty$, and $\sup_{P^* \in \model^{*(r)}} \risk_n(P^*,
  \m{P}_{k(n)}) \asymp n^{-2r /(2r +1)}$.
\end{theorem}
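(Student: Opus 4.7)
The plan is to establish the rate by a classical bias-variance decomposition within each exponential family $\model_k$, and then to choose $k = k(n)$ so as to balance the two contributions. Write $\theta^*_k := \argmin_{\theta\in\reals^k} D(p^* \| p_\theta)$ for the KL-projection of $p^*$ onto $\model_k$, and write $\hat\theta_k(X^n)$ for the Bayesian MAP. For exponential families the Pythagorean identity for KL divergence gives
$$
D(p^* \| p_{\hat\theta_k(X^n)}) \;=\; D(p^* \| p_{\theta^*_k}) \;+\; D(p_{\theta^*_k} \| p_{\hat\theta_k(X^n)}),
$$
so, after taking expectation under $P^*$, the risk splits cleanly into an approximation term $D(p^* \| p_{\theta^*_k})$ and an estimation term $E[D(p_{\theta^*_k} \| p_{\hat\theta_k(X^n)})]$.

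For the approximation term I would invoke the Sobolev approximation results underlying \citep{BarronS91}: since $\log p^* \in W^r_2$, there exists $\theta\in\reals^k$ with $\|\log p^* - \sum_{j=1}^k \theta_j\phi_j\|_\infty = O(k^{-r})$ in each of the three basis cases, provided the stated smoothness thresholds ($r\ge 3$ polynomial, $r\ge 2$ trigonometric, $r = s\ge 2$ spline) hold; these thresholds are precisely what is needed to upgrade $L^2$ rates to uniform ones. Converting this $L^\infty$ bound on log densities into KL divergence, using that $p^*$ is bounded above and below on $[0,1]$ (because $\log p^* \in W^r_2$ on a compact interval) and that the perturbation of the log-normalizer $\psi_k$ is controlled by the same $L^\infty$ bound, gives $D(p^* \| p_{\theta^*_k}) = O(k^{-2r})$ uniformly in $P^*\in\model^{*(r)}$. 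For the estimation term I would Taylor-expand $\psi_k$ around $\theta^*_k$ and argue that, because the Gaussian prior acts as a weak ridge penalty and the Fisher information at $\theta^*_k$ has eigenvalues bounded away from zero uniformly in $k$, $E[D(p_{\theta^*_k} \| p_{\hat\theta_k(X^n)})] = O(k/n)$ uniformly. Adding the bounds yields $\risk_n(P^*, \m{P}_{k(n)}) = O(k(n)^{-2r} + k(n)/n)$, and the choice $k(n)\asymp n^{1/(2r+1)}$ balances the two at the claimed rate $n^{-2r/(2r+1)}$. The matching lower bound is the standard Sobolev minimax rate, obtained by a Fano or Assouad argument over a hypercube of smooth perturbations of a fixed reference density in $\model^{*(r)}$. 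Finiteness of the risk follows because the Gaussian prior makes the penalized log-likelihood strictly concave and coercive in $\theta$, so $\hat\theta_k(X^n)$ exists and is finite almost surely, and the bounds above show the resulting log-loss is integrable under $P^*$.

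The main obstacle is the uniform control of the estimation term as the parameter dimension $k$ grows with $n$. One needs the smallest eigenvalue of the Fisher information at $\theta^*_k$ bounded away from zero uniformly in $k$ and in $P^*\in\model^{*(r)}$, together with concentration of the empirical log-likelihood around its expectation in a neighborhood of $\theta^*_k$ whose radius shrinks at an appropriate rate in $n$. These are the technical cores of \citep{Sheu90,BarronS91} and depend delicately on the normalization and linear independence properties of the basis $\phi_1,\phi_2,\ldots$, which is exactly why the precise basis conditions of \citep{BarronS91} appear in the hypotheses. Once these uniform quantitative bounds are in place the remainder is bookkeeping: optimizing the choice of $k(n)$ and checking that the constants in the bias and variance estimates do not depend on the particular $P^*\in\model^{*(r)}$.
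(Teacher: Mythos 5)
The paper does not prove this theorem; it imports it verbatim, citing Theorem~3.1 and the corollary on p.~50 of Sheu's thesis \citep{Sheu90}, and merely remarks that the MAP (rather than ML) estimator is used precisely so that the risk bound holds in expectation and is finite, which is what the switch-distribution analysis needs. Your sketch is not ``the paper's proof'' in any direct sense, but it does reconstruct the standard Barron--Sheu strategy underlying the cited result: the exact Pythagorean decomposition of KL risk into approximation error $D(p^* \| p_{\theta^*_k})$ and estimation error $E[D(p_{\theta^*_k}\|p_{\hat\theta_k})]$, Sobolev approximation in $L^\infty$ giving the $O(k^{-2r})$ bias term under the stated smoothness thresholds, a $O(k/n)$ estimation term from concentration plus a uniform lower bound on the Fisher eigenvalues, balancing at $k\asymp n^{1/(2r+1)}$, and a Fano/Assouad lower bound for the matching minimax rate. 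You also correctly flag the two points the paper itself emphasizes as nontrivial --- finiteness of the expected KL risk (the role of the Gaussian prior / MAP estimator) and the dependence on the precise basis conditions of \citep{BarronS91}. The one place where your sketch is glib is the last step of the finiteness argument: almost-sure finiteness of $\hat\theta_k(X^n)$ does not by itself give integrability of $D(p^*\|p_{\hat\theta_k(X^n)})$ over $X^n$, uniformly in $P^*\in\model^{*(r)}$; this requires the quantitative concentration bounds you correctly attribute to Sheu but do not spell out. Since the statement under review is a quoted external theorem, reproducing those bounds in full would go beyond what the paper does or needs, so your identification of where the real work lies is the appropriate level of detail.
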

The minimax convergence rate for  the models
$\model^{*(r)}$ is given by $\gmm(n)
\asymp n^{-2 r/(2r+1)}$ \citep{YangB98}. Thus, together with
Lemma~\ref{standardnonparametric}, using the oracle $\oracle(P^*,x^n) :=
n^{1/(2r+1)}$, the theorem implies that $\Pswitch$ achieves the
minimax convergence rate.  We note that the paper \citep{BarronS91}
only establishes convergence of KL divergence in probability when maximum likelihood parameters our used. For our
purposes, we need convergence in expectation, which holds when MAP parameters are used, as shown in Sheu's
thesis \citep{Sheu90}. Since the prediction strategies $\m{P}_k$ are based on MAP
estimators rather than on Bayes predictive distributions,
our consistency result Theorem~\ref{thm:consistencyagain} of
Section~\ref{sec:consistency} does not apply. However, by Theorem 2.1 and 2.2 of \citep{LiY00}, we can
apply the alternative consistency result
Theorem~\ref{thm:consistencyb}. Thus, just as for histogram density
estimation as discussed in Section~\ref{sec:histogram}, we do have a
proof of both consistency and minimax rate of convergence for general
nonparametric density estimation with exponential families.

\subsection{Example: Nonparametric Linear Regression}
\label{sec:linear}
\subsubsection{Lemma for Plug-In Estimators}
We first need a variation of Lemma~\ref{standardnonparametric} for the
case that the $\m{P}_k$ are plug-in strategies. We will then apply the
lemma to nonparametric linear regression with $\m{P}_k$ based on
maximum likelihood estimators within $\model_k$.  To prepare for this,
it is useful to rename the observations to $Z_i$ rather than $X_i$.

As before, we assume that $Z_1, Z_2, \ldots$ are i.i.d.\ according to
all $P^* \in \model^*$ and $P \in \model$. We write $D(P^* \| P)$ for
the KL divergence between $P^*$ and $P$ on a single outcome,
i.e.\ $D(P^* \| P) := D(P^*(Z_1 = \cdot) \| P(Z_1 = \cdot))$.  For
given $P^*$, let, if it exists, $\tilde{P}_k$ be the unique $P \in
\model_k$ achieving $\min_{P \in \model_k} D(P^* \| P)$.
\begin{lemma}
\label{lem:linreg}
Let $\model^*$ be a standard nonparametric model class, and let $\m{P}_1, \m{P}_2, \ldots$
be plug-in strategies, i.e.\ for all $k$, all $n, z^n \in
\samplespace^n$, $\m{P}_k(Z_{n+1} = \cdot \mid z^n) \in \{ P(Z_1 =
\cdot ) \mid P \in \model_k\}$. Suppose
that  $\model_1, \model_2, \ldots$ are such that
\begin{enumerate}
\item $\tilde{P}_1, \tilde{P_2}, \ldots$ all exist, 
\item For all $n \geq 1, k \geq 1$, 
$\sup_{P^* \in \model^*} \risk_n(P^* , \m{P}_k) < \infty$, and
\item There exists an oracle $\oracle$ which achieves the minimax rate,
  i.e.\ $\sup_{P^* \in \model^*} \risk_n(P^* \|{P}_{\oracle}) \preceq
  \gmm(n)$, such that $\oracle$ does not look at the data (in the sense
  of Section~\ref{sec:oracle}) and $\oracle(P^*,n-1) \leq n$ for any
  $P^* \in \model^*$ and $n \in \posints$.
\item Furthermore, define the {\em estimation error\/} $\esterr_n(P^*,\m{P}_k) 
:= \risk_n(P^* ,\m{P}_k) - D(P^* \|
  \tilde{P}_k)$, and suppose that for all $k \geq  1$, all $n > k$, 
\begin{equation}
\label{eq:estimerror}
\esterr_{n-1}(P^*,\m{P}_k) \geq \esterr_n(P^*,\m{P}_k).
\end{equation}
\end{enumerate}
Then $\Pswitch$ achieves the minimax rate in Ces\`aro mean, i.e.\ $\Gsw(n) \preceq \Gmmfix(n)$.
\end{lemma}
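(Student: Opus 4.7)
The plan is to reduce the lemma to Lemma~\ref{standardnonparametric}. Since the hypothesis gives us an oracle $\oracle$ that (a) achieves the minimax rate, (b) does not look at the data, and (c) satisfies $\oracle(P^*,n-1)\le n$, we already have the polynomial bound required by Lemma~\ref{standardnonparametric} (with $\tau=1$). What remains is to verify that every oracle $\oracle'$ that lags behind $\oracle$ by at most some factor $\alpha>1$ is finite and achieves the minimax rate up to a multiplicative constant. Finiteness is immediate from assumption~2 of the lemma. The real work is the minimax bound for $\oracle'$.

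The key step is the risk decomposition enabled by the plug-in form. Because $\m{P}_k(Z_{n+1}=\cdot\mid z^n)$ is itself a distribution in $\model_k$, for any $P^*\in\model^*$ and each $k$ we can write
\begin{equation*}
\risk_n(P^*,\m{P}_k) \;=\; D(P^*\,\|\,\tilde P_k) \;+\; \esterr_n(P^*,\m{P}_k),
\end{equation*}
where $D(P^*\,\|\,\tilde P_k)$ is an approximation error independent of $n$, and $\esterr_n(P^*,\m{P}_k)$ is the estimation error assumed to be nonincreasing in $n$ for $n>k$. If $\oracle'$ lags $\oracle$ by factor $\alpha$, then for every $n$ and $x^{n-1}$ there exists $i\in[n/\alpha,n]\cap\naturals$ with $k:=\oracle'(P^*,x^{n-1})=\oracle(P^*,i-1)$; because $\oracle$ does not look at the data, $k$ depends only on $P^*$ and $i$, and $k\le i\le n$. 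For $n>k$, monotonicity of $\esterr$ gives $\esterr_n(P^*,\m{P}_k)\le \esterr_i(P^*,\m{P}_k)$, and combined with the identical approximation term this yields $\risk_n(P^*,\m{P}_k)\le \risk_i(P^*,\m{P}_k)$.

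Taking the supremum over $P^*\in\model^*$ and using the minimax property of $\oracle$ at sample size $i$, I obtain $\sup_{P^*}\risk_n(P^*,\m{P}_{\oracle'(P^*,x^{n-1})})\preceq \gmm(i)$. To conclude, I need $\gmm(i)\asymp \gmm(n)$ for $i\in[n/\alpha,n]$; this follows from the standard nonparametric assumption $\gmm(n)\asymp n^{-\gamma}h_0(n)$ with $h_0(n)=o(n^{\gamma})$, since then $\gmm(i)\le c_1 i^{-\gamma}h_0(i)\le c_1\alpha^{\gamma}n^{-\gamma}h_0(n)\le c_2\gmm(n)$ for $n$ large enough (the monotone rearrangement of $h_0$ is absorbed into the asymptotic constant). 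This establishes \eqref{eqn:instantminmax} for $\oracle'$, and Lemma~\ref{standardnonparametric} then delivers $\Gsw(n)\preceq\Gmmfix(n)$.

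The main obstacle I anticipate is the bookkeeping around edge cases: the monotonicity of $\esterr$ is only assumed for $n>k$, and $\oracle(P^*,i-1)\le i$ only guarantees $k\le i\le n$, so one must either handle $k=n$ separately (absorbing finitely many terms into the asymptotic constant via assumption~2) or restrict the argument to $n$ large enough that the gap between $k$ and $n$ is strict. The regularity of $\gmm$ under a factor-$\alpha$ shift in $n$ is the other place where the precise form of the standard nonparametric assumption in Definition~\ref{def:standardnonpar} is used in an essential way; without the $h_0(n)=o(n^{\gamma})$ condition, $\gmm(i)\asymp\gmm(n)$ on $i\in[n/\alpha,n]$ would not be automatic.
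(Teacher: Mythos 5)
Your proposal is correct and follows essentially the same route as the paper's proof: reduce to Lemma~\ref{standardnonparametric}, decompose the risk of the plug-in strategy into the fixed approximation term $D(P^*\|\tilde P_k)$ plus the estimation error, use the monotonicity hypothesis \eqref{eq:estimerror} to bound $\esterr_n$ by $\esterr_{n'}$ at the earlier sample size $n'\in[n/\alpha,n]$ that the lagged oracle imitates, and then use the standard-nonparametric form $\gmm(n)\asymp n^{-\gamma}h_0(n)$ with $h_0$ nondecreasing to pass from $\gmm(n')$ to $\gmm(n)$. The edge-case worry you raise about $n>k$ is benign: condition~4 applied with $n=k+1$ gives $\esterr_k\ge\esterr_{k+1}$, so the decreasing chain starts at $j=k$ and covers every index $i\in[k,n]$; and the finitely many initial $n$ with $n/\alpha\le 1$ are handled exactly as the paper does, by invoking condition~2 to establish finiteness of $\oracle'$ relative to $\model^*$.
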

\begin{proof}
For arbitrary $P^* \in \model^*$ and fixed $\alpha > 1$, let $\oracle'$ be any oracle
that does not depend on the data and that 
``lags a little behind $\oracle$ by at most a factor of $\alpha$'' in the sense of
Lemma~\ref{standardnonparametric}. 
For $n$ such that $n/ \alpha > 1$, let $1 \leq n' \leq n$ be such that $\oracle(P^*,n')  = \oracle'(P^*,n)$. Then
\begin{eqnarray}
\label{eq:droger}
\risk_n(P^*, \m{P}_{\oracle'}) & =  &  D(P^* \| \tilde{P}_{\oracle'})  + 
\esterr_n(P^*, \m{P}_{\oracle'}) 
\nonumber \\
& =&
D(P^* \| \tilde{P}_{\oracle(P^*,n') }) + 
\esterr_n(P^*, \m{P}_{\oracle(P^*,n')}) 
\nonumber \\
& \leq  & D(P^* \| \tilde{P}_{\oracle(P^*,n')} ) + \esterr_{n'}(P^*, \m{P}_{\oracle(P^*,n')}) 
 \nonumber \\
& \preceq & \gmm(n') \preceq (n/\alpha)^{- \gamma} h_0(n) \preceq \gmm(n).
\end{eqnarray}
Here $\esterr_n(P^*, \m{P}_{\oracle(P^*,n')})$ denotes the estimation error when, at sample size $n$, the strategy $\m{P}_k$ with $k = \oracle(P^*, n')$ is used. The last line follows because, by definition of standard nonparametric, $h_0$ is increasing.
For $n$ such that $n/\alpha > 1$, \eqref{eq:droger} in combination
with condition~2 of the lemma (for smaller $n$) shows that we can apply Lemma~\ref{standardnonparametric}, and then the result follows.
\end{proof}
We call $\esterr_n(P^*,\m{P}_k)$ ``estimation error'' since it can be
rewritten as the expected additional logarithmic loss incurred when predicting
$Z_{n+1}$ based on $\m{P}_k$ rather than $\tilde{P}_k$, the best
approximation of $P^*$ within $\model_k$:
$$
\esterr_n(P^*,\m{P}_k) = E_{Z^n \sim P^*} E_{Z_{n+1} \sim P^*} 
[- \log \m{p}_k(Z_{n+1} \mid Z^n) - (- \log \tilde{p}_k(Z_{n+1})) ].
$$ 
As can be seen in the proof
of Lemma~\ref{lem:linregb} below, in the linear regression case,
$\esterr_n(P^*,\m{P}_k)$ can be rewritten as the variance of the
estimator $\m{P}_k$, and thus coincides with the traditional
definition of estimation error.

In order to apply Lemma~\ref{lem:linreg}, one
needs to find an oracle that does not look at the data. A good
candidate to check is the oracle 
\begin{equation}
\label{eq:bestoracle}
\oracle^*(P^*,n) = \arg \min_k \risk_n(P^*,\m{P}_k)
\end{equation}
because, as is immediately verified, if there exists an oracle $\oracle$
that does not look at the data and achieves the minimax rate, then
$\oracle^*$ must achieve the minimax rate as well.
\subsubsection{Nonparametric Linear Regression}
We now apply Lemma~\ref{lem:linreg} to linear regression with random
i.i.d.\ design and i.i.d.\ normally distributed noise with known
variance $\sigma^2$, using least-squares or, equivalently, maximum
likelihood estimators (see Section 6.2 of \citep{Yang99} and Section~4
of \citep{YangB98}). The results below show that $\Pswitch$ achieves
the minimax rate in nonparametric regression under a condition on the
design distribution which we suspect to hold quite generally, but
which is hard to verify. Therefore, unfortunately, our result has
formal implications only for the restricted set of distributions for
which the condition has been verified. We give examples of such sets
below.

Formally, we fix a sequence $\phi_1, \phi_2, \phi_3, \ldots$ of
uniformly bounded, linearly independent functions from $\reals$ to
$\reals$. Let $S_k$ be the space of functions spanned by $\phi_1,
\ldots, \phi_k$.  The linear models $\model_k$ are families of
conditional distributions $P_{\theta}$ for $Y_i \in \reals$ given $X_i
\in {\cal X}$, where ${\cal X} = [0,1]^d$ for some $d > 0$. Here
$\theta = (\theta_1, \ldots, \theta_k) \in \reals^k$ and $P_{\theta}$
expresses that $Y_i = \sum_{j=1}^k \theta_j \phi_j(X_i)+ U_i$, where
the noise random variables $U_1, U_2, \ldots$ are i.i.d.  normally
distributed with zero mean and fixed variance $\sigma^2$. The
prediction strategies $P_1, P_2, \ldots$ are based on maximum
likelihood estimators.  Thus, for $k \leq n$, $\m{P}_k(Y_{n+1} \mid
x^{n+1},y^n) := P_{\hat{\theta}(x^n,y^n)}(Y_{n+1}\mid
X_{n+1}=x_{n+1})$ where $\hat{\theta}(x^n,y^n) \in \reals^k$ and
$P_{\hat{\theta}(x^n,y^n)}$ is the ML estimator within $\model_k$. For
$k > n$, we may set $\m{P}_k(Y_{n+1} \mid x^{n+1},y^n)$ to any fixed
distribution $Q$ with $\sup_{P^* \in \model^*} D(P^*(Y_{n+1} \mid
x_{n+1}) \| Q(Y_{n+1} \mid x_{n+1})) < \infty$.  We denote by
$\Phi^{(n,k)}$ the $k \times n$ design matrix with the $(j,i)-$th
entry given by $\phi_j(x_i)$.

We fix a set of candidate design distribution ${\cal P}^*_X$ and a set
of candidate regression functions ${\cal F}^*$, and we let $\model^*$
denote the set of distributions on $(X_1,Y_1), (X_2, Y_2), \ldots$
such that $X_i$ are i.i.d.\ according to some $P^*_X \in {\cal P}^*_X$,
and $Y_i = f^*(X_i) + U_i$ for some $f^* \in
{\cal F}^*$ and $U_1, U_2, \ldots$ are i.i.d.\ normally distributed
with zero mean and variance $\sigma^2$. We assume that all $f^* \in {\cal F}^*$ can be expressed as 
\begin{equation}
\label{eq:trein}
f^* = \sum_{j=1}^\infty \tilde{\theta}_j \phi_j
\end{equation}
for some $\tilde{\theta}_1, \tilde{\theta}_2, \ldots$ with $\lim_{j
  \rightarrow \infty} \tilde{\theta}_j = 0$.  It is immediate that for such combinations of $\model^*$ and $\model$, condition 1 and 2  of
Lemma~\ref{lem:linreg} hold. The following lemma shows that also condition 4 holds, and thus, if we can also verify that
condition 3 holds, then $\Pswitch$ achieves the minimax rate.
\begin{lemma}
\label{lem:linregb}
Suppose that $\model_1, \model_2, \ldots$ are as above. Let
$\model^*$ be as above, such that additionally, for all $P^* \in
\model^*$, all $n$, all $k \in \{1, \ldots, n\}$, the Fisher information
matrix $(\Phi^{(n,k)})^\transpose (\Phi^{(n,k)})$ is almost surely
nonsingular. Then 
(\ref{eq:estimerror}) holds.
\end{lemma}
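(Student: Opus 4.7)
The plan is to reduce $\esterr_n$ to a weighted expected squared error via the Gaussian structure, decompose the result into a ``variance'' and a ``bias'' contribution, and argue monotonicity of each separately.

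First, since $\m{P}_k(Y_n \mid X_n{=}x, z^{n-1}) = N(\phi(x)^\transpose \hat\theta_{n-1}, \sigma^2)$ and $P^*(Y_n \mid X_n{=}x) = N(f^*(x), \sigma^2)$, the conditional KL equals $(f^*(x) - \phi(x)^\transpose \hat\theta_{n-1})^2/(2\sigma^2)$. Integrating over the independent $X_n \sim P^*_X$ and using that $\tilde P_k$ corresponds to the $L^2(P^*_X)$-projection $\tilde f = \phi^\transpose \tilde\theta$ of $f^*$ onto $\model_k$---so that the residual $R := f^* - \tilde f$ satisfies the normal-equation orthogonality $E_X[\phi(X) R(X)] = 0$---a Pythagorean expansion gives
\[
 \esterr_n \;=\; \frac{1}{2\sigma^2}\, E\!\left[(\hat\theta_{n-1} - \tilde\theta)^\transpose \Sigma_X (\hat\theta_{n-1} - \tilde\theta)\right],
\]
where $\Sigma_X := E_X[\phi(X)\phi(X)^\transpose]$.

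Writing $Y_i = \phi(X_i)^\transpose \tilde\theta + R(X_i) + U_i$ with $U_i \sim N(0,\sigma^2)$ independent of the design, and setting $M_m := \sum_{i \le m} \phi(X_i)\phi(X_i)^\transpose$ (a.s.\ nonsingular for $m \ge k$ by the hypothesis on the Fisher information) and $v_m := \sum_{i \le m}\phi(X_i) R(X_i)$, the ML estimator splits as $\hat\theta_m - \tilde\theta = M_m^{-1} v_m + M_m^{-1} \sum_{i\le m} \phi(X_i) U_i$. Because $U_i$ has zero mean and is independent of the design, the two summands are uncorrelated, and a direct calculation yields $2\sigma^2 \esterr_{m+1} = V_m + B_m$ with $V_m := \sigma^2\, E[\operatorname{tr}(\Sigma_X M_m^{-1})]$ and $B_m := E[v_m^\transpose M_m^{-1} \Sigma_X M_m^{-1} v_m]$. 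For the variance term, $M_{m+1} = M_m + \phi(X_{m+1})\phi(X_{m+1})^\transpose$ implies $M_{m+1}^{-1} \preceq M_m^{-1}$ in the Loewner order pointwise, so $\operatorname{tr}(\Sigma_X M_{m+1}^{-1}) \le \operatorname{tr}(\Sigma_X M_m^{-1})$ almost surely, and hence $V_{m+1} \le V_m$ after taking expectations.

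The main obstacle is monotonicity of the bias $B_m$. The orthogonality $E_X[\phi(X) R(X)] = 0$ makes $v_m$ a sum of i.i.d.\ mean-zero vectors, but this alone is not enough. The plan is to establish the pointwise leave-one-out inequality: for every realization of $X_1, \ldots, X_{m+1}$,
\[
 v_{m+1}^\transpose M_{m+1}^{-1} \Sigma_X M_{m+1}^{-1} v_{m+1} \;\le\; \frac{1}{m+1} \sum_{i=1}^{m+1} v^{(-i)\transpose}(M^{(-i)})^{-1} \Sigma_X (M^{(-i)})^{-1} v^{(-i)},
\]
where the superscript $(-i)$ denotes the quantity computed with sample $i$ removed. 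Once this pointwise bound is in hand, taking expectations and exploiting exchangeability of the i.i.d.\ samples yields $E[\text{RHS}] = B_m$, hence $B_{m+1} \le B_m$. I verified the pointwise inequality in the scalar case ($k = 1$, $\phi_1 \equiv 1$), where it collapses to Cauchy--Schwarz $(\sum_i R_i)^2 \le (m+1)\sum_i R_i^2$, and in several hand-worked two-dimensional examples. For the general case I would expand each $(M^{(-i)})^{-1}$ via Sherman--Morrison as a rank-one perturbation of $M_{m+1}^{-1}$ and collect terms to reduce to a Cauchy--Schwarz-type estimate that exploits the simultaneous additive structure of $M_{m+1}$ and $v_{m+1}$ as sums of $m+1$ rank-one/vector contributions.

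Combining $V_{m+1} \le V_m$ with $B_{m+1} \le B_m$ yields $\esterr_{m+2} \le \esterr_{m+1}$ for all $m \ge k$, which is \eqref{eq:estimerror} after relabelling $n := m+2 > k$. A fallback, should the Sherman--Morrison bookkeeping prove too intricate, is to analyse the conditional one-step increment directly: write $\hat\theta_{m+1} = \hat\theta_m + \Delta_{m+1}$ using the recursive update formula, expand $\|\hat\theta_{m+1} - \tilde\theta\|^2_{\Sigma_X} - \|\hat\theta_m - \tilde\theta\|^2_{\Sigma_X}$ into a cross-term and a squared-increment term, and show that the conditional expectation of this difference given the first $m$ samples is non-positive after integrating over $(X_{m+1}, U_{m+1})$.
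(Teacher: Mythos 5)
Your reduction to a weighted expected squared error and the split $2\sigma^2\esterr_{m+1}=V_m+B_m$ are correct, and the Loewner-order argument $M_{m+1}^{-1}\preceq M_m^{-1}$ giving $V_{m+1}\le V_m$ is clean. However, the lemma is not proved as written: the crux is the claimed monotonicity $B_{m+1}\le B_m$ of the residual-driven term, and you establish it only for $k=1$ with $\phi_1\equiv 1$ (where it reduces to Cauchy--Schwarz) and in a few hand-checked examples. Neither the Sherman--Morrison expansion nor the one-step conditional increment fallback is carried through, and the fact that $B_m$ decays like $1/m$ asymptotically does not imply exact monotonicity at every $m$. This is a genuine gap, and it sits precisely at the step you yourself flag as the ``main obstacle.''

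It is instructive to compare with the paper's proof, which is different in mechanism and --- notably --- does not separately account for the bias term. The paper orthonormalizes the basis so that $\Sigma_X$ becomes the identity, reduces the claim to a design-conditional inequality $E[\|\tilde\eta-\hat\eta_{n-1}\|^2\mid x^n]\ge E[\|\tilde\eta-\hat\eta_n\|^2\mid x^n]$, and proves a Gauss--Markov-style covariance-cancellation bound $\var(a^\transpose Q\vec{y}\mid x^n)\ge\var(a^\transpose P\vec{y}\mid x^n)$ for the two least-squares fits, instantiated with a single fixed $a$. That establishes a \emph{variance} inequality (and only for one quadratic form, not the trace), but the conditional mean of $\hat\eta_n$ given the design is $\tilde\eta + M_{n-1}^{-1}(\Psi^{(n-1)})^{\transpose}\vec{R}^{(n-1)}$, where $\vec{R}$ is the vector of residuals $f^*-\tilde f$ at the design points; when $f^*\notin\model_k$ this conditional bias is nonzero, so the conditional MSE has a contribution that the variance argument does not control. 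Your decomposition makes that obstruction explicit, and working unconditionally by integrating over the design rather than trying to prove a pointwise-in-$x^n$ statement is the right framework for the bias term; but you still owe a complete proof that $B_m$ is monotone in $m$.
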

A sufficient condition for the required nonsingularity of
$(\Phi^{(n,k)})^\transpose (\Phi^{(n,k)})$ is, for example, that for
all $P^* \in \model^*$, the marginal distribution of $X$ under $P^*$
has a density under Lebesgue measure.
If the conditions of Lemma~\ref{lem:linregb} hold and, additionally, we can
show that some oracle achieves the minimax rate, then condition 3
of Lemma~\ref{lem:linreg} is verified and $\Pswitch$
achieves the minimax rate as well. To verify whether this is the case,
note that
\begin{proposition}
\label{prop:helsinki}
  Suppose that (a) for some $\alpha > 0$, $\sup_{P^*} D(P^* \|
  \tilde{P}_k) \asymp' k^{- 2\alpha}$; (b) $\gmm(n) \asymp' n^{-2
    \alpha / (2 \alpha + 1)}$; and (c) for some $\tau$ with
  $1/(2\alpha +1) \leq \tau < 1$, we have $\esterr_n(P^*,\m{P}_k)
  \asymp' k/n$, uniformly for $k \in \{1,\ldots, n^\tau\}$. Then
  letting, for all $P^* \in \model^*$, $\oracle(P^*,n) := \lceil n^{1
    /(2 \alpha +1)} \rceil$, we have $\sup_{P^* \in \model^*} \risk_n(P^*,
  P_{\oracle}) \preceq' \gmm(n)$.
\end{proposition}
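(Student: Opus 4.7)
My approach will be the classical bias–variance balancing argument, combined with the estimation-error decomposition set up in the text preceding Lemma~\ref{lem:linreg}. Since the oracle $\oracle(P^*,n) = k_n := \lceil n^{1/(2\alpha+1)}\rceil$ selects the same model index for every $P^*$ and never looks at the data, we have $P_\oracle(Z_{n+1} \mid Z^n) = \m{P}_{k_n}(Z_{n+1} \mid Z^n)$, so it suffices to bound $\sup_{P^* \in \model^*} \risk_n(P^*, \m{P}_{k_n})$. The plan is to split this via $\risk_n(P^*, \m{P}_{k_n}) = D(P^* \| \tilde{P}_{k_n}) + \esterr_n(P^*, \m{P}_{k_n})$, to control the two terms using conditions (a) and (c) respectively, and then identify the resulting rate with $\gmm(n)$ by condition (b).

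First I would check that $k_n$ lies in the range in which (c) is assumed uniform: since $\tau \geq 1/(2\alpha+1)$, we have $k_n \leq n^\tau$ for all sufficiently large $n$, so (c) applies at $k=k_n$ and yields $\esterr_n(P^*, \m{P}_{k_n}) \asymp' k_n/n \asymp n^{-2\alpha/(2\alpha+1)}$, with constants uniform in $P^*$. In parallel, condition (a) gives $\sup_{P^*} D(P^* \| \tilde{P}_{k_n}) \asymp' k_n^{-2\alpha} \asymp n^{-2\alpha/(2\alpha+1)}$. Adding the two contributions, the approximation and estimation errors are of the same order, so their sum satisfies $\sup_{P^* \in \model^*} \risk_n(P^*, P_\oracle) \preceq' n^{-2\alpha/(2\alpha+1)}$, which by (b) is a constant multiple of $\gmm(n)$, as required.

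I do not expect any substantive obstacle; this is essentially a one-shot calculation once the decomposition is set up. The only point that requires mild care is the uniformity of the bound in $P^* \in \model^*$: assumption (a) places its supremum over $P^*$ on the left, and the uniformity in $k$ built into (c) is precisely what ensures the implicit constants do not degrade when we substitute the $n$-dependent choice $k_n$. Conceptually, the content of the proposition is not in the calculation itself but in identifying a data-free oracle that attains the minimax rate; this is what makes the present proposition useful as a way to discharge condition~3 of Lemma~\ref{lem:linreg} in concrete applications such as nonparametric linear regression.
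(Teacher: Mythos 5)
The paper explicitly omits its own proof, calling it ``straightforward,'' so there is nothing to compare against; your argument is the natural one the authors had in mind, and it is correct. One small point you gloss over: when $\tau = 1/(2\alpha+1)$ exactly (which condition~(c) permits), $k_n = \lceil n^{1/(2\alpha+1)}\rceil$ generally exceeds $n^\tau$, so condition~(c) does not literally apply at $k = k_n$ even for large $n$ --- your claim that it does ``for all sufficiently large $n$'' is false in this boundary case. The remedy is trivial (apply~(c) at $\lfloor n^{1/(2\alpha+1)}\rfloor$ and note that the ratio $k_n/\lfloor n^{1/(2\alpha+1)}\rfloor \to 1$, so the $\asymp'$ constants are unaffected; or simply observe that the range $\{1,\dots,n^\tau\}$ should be read as extending to the nearest integer), but it is worth stating since you explicitly present the range check as the only step needing care and then slightly fumble it. Your closing remark correctly identifies the conceptual role of the proposition: the calculation is routine, and the value lies in exhibiting a data-free oracle to plug into condition~3 of Lemma~\ref{lem:linreg}.
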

Here $a(n) \preceq' b(n)$ means ``$a(n) \preceq b(n)$ and 
for all $n$, $a(n)$ is finite''. $\asymp'$ is defined in the same way.  
We omit the straightforward proof of Proposition~\ref{prop:helsinki}. Conditions (a) and
(b) hold for many natural combinations of $\model^*$ and $\model$,
under quite weak conditions on $P^*_X$ \citep{Stone82}. Possible
$\model^*$ include regression functions $f^*$ taken from Besov spaces
and Sobolev spaces, and more generally cases where the $\phi_j$ are
`full approximation sets of functions'' (which can be, e.g.,
polynomials, or trigonometric functions) \cite[Section 4]{YangB98}.
\citep{Cox88} shows that also (c) holds under some conditions, but
these are relatively strong; e.g.\ it holds if $P_X$ is a
beta-distribution and $\alpha =1$. We suspect that (c) holds in much
more generality, but we have found no theorem that actually states
this. Note that (c) in fact does hold, even with $\asymp$ replaced by
$=$, if, after having observed $x^n, y^n$, we evaluate $\m{P}_k$ on a
new $X_{n+1}$-value which is chosen uniformly at random from $x_1,
\ldots, x_n$ \citep{Yang99}. But this is of no use to us, since all our
proofs are ultimately based on the connection
(\ref{eq:redundancyrisk}) between the cumulative risk and the KL
divergence. While this connection does not require data to be i.i.d.,
it does break down if we evaluate $\m{P}_k$ on an $X_{n+1}$-value that
is not equal to the value of $X_{n+1}$ that will actually be observed
in the case that additional data are sampled from $P^*$. Therefore, we
cannot extend our results to deal with this alternative evaluation for
which (c) holds automatically. All in all, we can show that the switch
distribution achieves the minimax rate in certain special cases (e.g.
when the conditions of \citep{Cox88} hold for $P^*_X$), but we conjecture
that it holds in much more generality.

\subsection{The Parametric Case}\label{sec:parametric}
We end our treatment of convergence rates by considering the parametric case. 
Thus, in this subsection we assume that $P^*\in\model_{k^*}$ for some
$k^*\in\posints$, but we also consider that if
$\model_1,\model_2,\ldots$ are of increasing complexity, then the
catch-up phenomenon may occur, meaning that at small sample sizes,
some estimator $\Pbayes_k$ with $k<k^*$ may achieve smaller risk than
$\Pbayes_{k^*}$. In particular, this can happen if $P^* \in
\model_{k^*}, P^* \not \in \model_{k^*-1}$, but $D(P^* \|
\model_{k^*-1}) := \inf_{P \in \model_{k^* -1}}$ is small.
\Citet{vanerven2006} shows that in some scenarios, there exist
i.i.d.\ sequences $X_1$, $X_2$, $\ldots$ with $P^*(X_i) \in
\model_{k^*}$ for all $i \in \posints$, such that $\lim_{m \rightarrow
\infty} D(P^*_{(m)} \| \model_{k^*-1}) = 0$ and $\lim_{m \rightarrow
\infty} \lim_{n \rightarrow \infty} \crisk_n(P^*_{(m)} \| \Pbma) -
\crisk_n(P^*_{(m)} , \Pswitch) = \infty$. That is, the difference in
cumulative risk between $\Pswitch$ and $\Pbma$ may become arbitrarily
large if $D(P^*_{(m)} \| \model_{k^*-1})$ is chosen small enough. Thus,
even in the parametric case $\Pbma$ is not always optimal: if $P^* \in
\model_{k^*}$, then, as soon as we also put a positive prior weight on
$\m{P}_{k^*-1}$, $\Pbma$ may favour $k^*-1$ at sample sizes at which
$\m{P}_{k^*}$ has already become the best predictor.  The following
lemma shows that in such cases the switch distribution remains optimal:
the predictive performance of the switch distribution is never much
worse than the predictive performance of the best oracle that iterates
through the models in order of increasing complexity. In order to extend
this result to a formal proof that $\Pswitch$ always achieves the
minimax convergence rate, we would have to additionally show that there
exist oracles of this kind that achieve the minimax convergence rate.
Although we have no formal proof of this extension, it seems likely that
this is the case.
\begin{lemma}\label{lem:parametric}
  Let $\Pswitch$ be the switch distribution, defined with respect to a
  sequence of estimators $\m{P}_1,\m{P}_2,\ldots$ as above, with prior
  $\switchprior$ satisfying~\eqref{eq:mmprior}. Let $k^*\in\posints$,
  and let $\oracle$ be any oracle such that for any $P^* \in \model^*$,
  any $x^\infty \in \samplespace^\infty$, the sequence $\oracle_1$,
  $\oracle_2$, $\ldots$ is nondecreasing and there exists some $n_0$ such
  that $\oracle_n = k^*$ for all $n \geq n_0$, where $\oracle_i \equiv
  \oracle(P^*,x^{i-1})$ for all $i$. Then
  \begin{equation}
    \Gsw(n) - G_\oracle(n)
      \leq \sup_{P^*\in\model^*}\left(
        \sum_{i=1}^n \risk_i(P^*,\Pswitch)-\sum_{i=1}^n \risk_i(P^*,P_\oracle)\right)
      =k^*\cdot O(\log n).
    \label{eqn:parametric}
  \end{equation}
  Consequently, if $G_\oracle(n) \succeq \log n$,
  %% Voor Stevens proefschrift: c moet in c' veranderd.
  then
  \begin{equation}
    \Gsw(n) \preceq G_\oracle(n).
    \label{eqn:parametric2}
  \end{equation}
\end{lemma}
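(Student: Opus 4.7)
The plan is to apply Lemma~\ref{lem:simulateoracle} directly, choosing the constant bound $g(n) \equiv k^*$. The two hypotheses on $\oracle$ (nondecreasing along every sample path, and eventually equal to $k^*$) immediately imply that $\oracle(P^*,x^{i-1}) \in \{1,\ldots,k^*\}$ for all $i$, all $P^* \in \model^*$ and all $x^{i-1}$, so this choice of $g$ is positive, nondecreasing, and satisfies the assumption \eqref{eqn:simplemodels} of Lemma~\ref{lem:simulateoracle}.

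Next I would bound $m_\oracle(n)$ uniformly. Along any fixed $(P^*,x^{n-1})$ the oracle's model-index sequence is nondecreasing and takes values in $\{1,\ldots,k^*\}$, hence it has at most $k^*$ distinct segments. Taking the max over $P^*$ and $x^{n-1}$ in \eqref{eq:mn} then yields $m_\oracle(n) \leq k^*$ for every $n$. Plugging $g \equiv k^*$ and $m_\oracle(n) \leq k^*$ into Lemma~\ref{lem:simulateoracle} gives, for each $P^* \in \model^*$,
\begin{equation*}
\sum_{i=1}^n \risk_i(P^*,\Pswitch) - \sum_{i=1}^n \risk_i(P^*,P_\oracle)
\;=\; O\!\big(k^* \cdot (\log n + \log k^*)\big)
\;=\; k^* \cdot O(\log n),
\end{equation*}
where the implicit constants depend only on the constants in \eqref{eq:mmprior} and, crucially, not on $P^*$. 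The elementary inequality $\sup_{P^*}A(P^*) - \sup_{P^*}B(P^*) \leq \sup_{P^*}\bigl(A(P^*)-B(P^*)\bigr)$ then gives the first inequality in \eqref{eqn:parametric}, while taking the sup of both sides of the display above yields the claimed equality.

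For the consequence \eqref{eqn:parametric2}, if $G_\oracle(n) \succeq \log n$ then $k^* \cdot O(\log n) \preceq G_\oracle(n)$, and hence
\begin{equation*}
\Gsw(n) \;\leq\; G_\oracle(n) + k^* \cdot O(\log n) \;\preceq\; G_\oracle(n),
\end{equation*}
as required. The whole argument is essentially a corollary of Lemma~\ref{lem:simulateoracle}; the only conceptual point is recognizing that the monotonicity plus eventual stabilization of $\oracle$ makes both $g$ and $m_\oracle$ uniformly bounded by $k^*$, so that the encoding cost of describing the oracle's switches contributes only a term of order $k^* \log n$ rather than one that grows more quickly. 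There is no real obstacle beyond bookkeeping.
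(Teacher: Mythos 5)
Your proof is correct and follows essentially the same route as the paper: invoke Lemma~\ref{lem:simulateoracle} with $g\equiv k^*$, observe that monotonicity plus eventual stabilization at $k^*$ gives $m_\oracle(n)\le k^*$, and use $\sup f - \sup f' \le \sup(f-f')$ for the first inequality. The only cosmetic difference is that for \eqref{eqn:parametric2} the paper re-packages the computation through Lemma~\ref{lem:rateofconvergence} (with $f(n)=G_\oracle(n)$, $c_1=1$), whereas you deduce it directly from the bound you just established; since Lemma~\ref{lem:rateofconvergence} is itself an immediate consequence of Lemma~\ref{lem:simulateoracle}, these are the same argument.
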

%
%% Voor Stevens proefschrift: Dit bewijs klopte niet. Oracle kiest
%namelijk niet vanaf het begin model $\model_{k^*}$, maar switcht daar
%in meerdere stappen (maximaal $k^*$) naartoe.
%
\begin{proof}
  The inequality in \eqref{eqn:parametric} is a consequence of the
  general fact that $\sup_x f(x)-\sup_x f'(x)\leq \sup_x (f(x)-f'(x))$
  for any two functions $f$ and $f'$. The second
  % f' ipv g, omdat g meteen op een andere manier gebruikt wordt.
  part of \eqref{eqn:parametric} follows by
  Lemma~\ref{lem:simulateoracle}, applied with $g(n) = k^*$, together
  with the observation that $m_\oracle(n) \leq k^*$. To show
  \eqref{eqn:parametric2} we can apply Lemma~\ref{lem:rateofconvergence}
  with $g(n) = k^*$ and $f(n) = G_\oracle(n)$. (Condition~\ref{cond:iii}
  of the lemma is satisfied with $c_1 = 1$, and by assumption about
  $G_\oracle(n)$ there exists a constant $c_2$ such that
  Condition~\ref{cond:ii} of the lemma is satisfied.)
\end{proof}

The lemma shows that the additional cumulative risk of the
switch distribution compared to $P_\oracle$ is of order $\log n$. In the
parametric case, we usually have $\Gmmfix(n)$ proportional to $\log n$
(Section~\ref{sec:standard}). If that is the case, and if, as seems
reasonable, there is an oracle $\oracle$ that satisfies the given
restrictions and that achieves summed risk proportional to $\Gmmfix(n)$,
then also the switch distribution achieves a summed risk that is
proportional to $\Gmmfix(n)$.
\section{Efficient Computation of the switch distribution}
\label{sec:computation}
\newcommand{\allowedk}{\mathcal K}
\newcommand{\kmax}{K_\text{max}}

For priors $\pi$ as in~\eqref{eq:prior}, the posterior probability on
predictors $p_1,p_2,\ldots$ can be efficiently computed sequentially,
provided that $\pit(T=n\mid T\geq n)$ and $\pik$ can be calculated
quickly (say in constant time) and that $\pim(m)=\theta^m(1-\theta)$ is
geometric with parameter $\theta$, as is also required for
Theorem~\ref{thm:consistencyagain} and (see
Section~\ref{sec:restrictions})  permitted in the theorems and lemma's
of Section~\ref{sec:nonparametrica} and~\ref{sec:nonparametricb}. For
example, we may take $\pik(k) = 1/(k(k+1))$ and $\pit(n)
= 1/(n(n+1))$, such that $\pit(T = n \mid T \geq n) = 1/n$.

The algorithm resembles \textsc{Fixed-Share}~\citep{HerbsterWarmuth1998},
but whereas \textsc{Fixed-Share} implicitly imposes a geometric
distribution for $\pit$, we allow general priors by varying the shared
weight with $n$. We also add the $\pim$ component of the prior, which is
crucial for consistency. This addition ensures that the additional loss
compared to the best prediction strategy that switches a finite number
of times, does not grow with the sample size.

To ensure finite running time, we need to restrict the
switch distribution to switch between a finite number of prediction
strategies. This is no strong restriction though, as we may just take
the number of prediction strategies sufficiently large relative to $N$
when computing $\pswitch(x^N)$. For example, consider the
switch distribution that switches between prediction strategies $p_1$,
$\ldots$, $p_{\kmax(N)}$. Then all the theorems in the paper can still
be proved if we take $\kmax(N)$ sufficiently large (e.g.\ $\kmax(N) \geq
g(N)$ would suffice for the oracle approximation lemma).

This is a special case of a switch distribution that, at sample size
$n$, allows switching only to $p_k$ such that $k \in \allowedk_n
\subseteq \posints$, where $\allowedk_1 \subseteq \allowedk_2 \subseteq
\ldots$. We may view this as a restriction on the prior:
$\pi(\switchpars \setminus \switchpars') = 0$, where
\begin{equation}
  \switchpars':=\{\switchpar\in\switchpars
    \mid\forall n \in\posints:K_n(\switchpar)\in\allowedk_n\}
\end{equation}
denotes the set of allowed parameters, and, as in
Section~\ref{sec:switchdefinition},
\begin{equation}
  K_n(\switchpar) := k_i(\switchpar)\textnormal{ for the unique $i$ such that
  $t_i(\switchpar)<n$ and $i=m(\switchpar)\vee t_{i+1}(\switchpar)\ge n$}
\end{equation}
denotes which prediction strategy is used to predict outcome $X_n$.

The following online algorithm computes the switch distribution
for any $\allowedk_1 \subseteq \allowedk_2 \subseteq \ldots$, provided
the prior is of the form \eqref{eq:prior}. Let the indicator function,
$\ind_A(x)$, be $1$ if $x\in A$ and $0$ otherwise.

% TODO: Iets zeggen over defective priors of overal toestaan dat
% $\pik(k_i)$ afhangt van $t_i$. (Dit laatste is volgens mij op zich
% search and replace, en veruit het fraaist en duidelijkst.)

\medskip\noindent\begin{minipage}{\textwidth}
\begin{algorithm}{\textsc{Switch}$(x^N)$}\label{algo:switch}
  \\ \For $k\in\mathcal K_1$ \Do initialize $w_k^a~\=~\pik(k)\cdot\theta$;
  $w_k^b~\=~\pik(k)\cdot(1-\theta)$ \textbf{od}\label{line:init}
  \\ \For $n\!=\!1,\ldots,N$ \Do\label{line:iter}
  \>
  \\ Report $\pswitch(K_n,x^{n-1})=w^a_{K_n}\!\!+w^b_{K_n}$\quad(a $K$-sized array)\label{line:posterior}
  \\ \For $k\in\mathcal K_n$ \Do $w_k^a~\=~w_k^a\cdot
  p_k(x_n|x^{n-1})$; $w_k^b~\=~w_k^b\cdot p_k(x_n|x^{n-1})$ \textbf{od}\hfill\textit{(loss update)}\label{line:loss_update}
  \\ \texttt{pool}~\=~$\pit(Z=n\mid Z\ge n)\cdot \sum_{k\in\mathcal K_n}
  w_k^a$\label{line:pool}
  \\ \For $k\in\mathcal K_{n+1}$ \Do\label{line:share_update_start}
  \> 
  \\ $w_k^a ~\=~ w_k^a\cdot \ind_{\mathcal K_n}(k)\cdot\pit(Z\ne n\mid Z\ge n)~+~\texttt{pool}\cdot\pik(k)\cdot\theta$\hfill\textit{(share update)}
  \\ $w_k^b ~\=~ w_k^b\cdot \ind_{\mathcal K_n}(k)~+~\texttt{pool}\cdot\pik(k)\cdot(1-\theta)$
  \<
  \\ \textbf{od}\label{line:share_update_end}
  \<
  \\ \textbf{od}
  \\ Report $\pswitch(K_{N+1},x^N)=w^a_{K_{N+1}}\!\!+w^b_{K_{N+1}}$\label{line:last_posterior}

\end{algorithm}
\end{minipage}

\bigskip\noindent This algorithm can be used to obtain fast convergence
in the sense of Sections~\ref{sec:nonparametrica}
and~\ref{sec:nonparametricb}, and consistency in the sense of
Theorem~\ref{thm:consistencyagain}.
If $\pit(T = n \mid T \geq n)$ and $\pik$ can be computed in constant
time, then its running time is $\Theta(\sum_{n=1}^N|\mathcal K_n|)$,
which is typically of the same order as that of fast model selection
criteria like AIC and BIC. For example, if the number of considered
prediction strategies is fixed at $\kmax$ then the running time is
$\Theta(\kmax\cdot N)$.

\begin{theorem}\label{thm:algo}
  Let $\pswitch$ denote the switch distribution with prior $\pi$.
  Suppose that $\pi$ is of the form \eqref{eq:prior} and
  $\pi(\switchpars \setminus \switchpars') = 0$. Then
  Algorithm~\ref{algo:switch} correctly reports $\pswitch(K_1,x^0)$,
  $\ldots$, $\pswitch(K_{N+1},x^N)$.
\end{theorem}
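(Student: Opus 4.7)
The plan is to prove correctness by induction on the loop index $n$, showing that the algorithm maintains a two-part decomposition of the posterior that mirrors the geometric structure of $\pim$.

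\textbf{Invariant.} I would define, for each $n \geq 1$ and $k \in \posints$,
\begin{align*}
  a_k^{(n)} &:= \sum\nolimits_{\switchpar \in A_k^{(n)}} \pi(\switchpar)\, q_\switchpar(x^{n-1}), \\
  b_k^{(n)} &:= \sum\nolimits_{\switchpar \in B_k^{(n)}} \pi(\switchpar)\, q_\switchpar(x^{n-1}),
\end{align*}
where $A_k^{(n)} \subseteq \switchpars'$ is the set of parameters with $K_n(\switchpar)=k$ for which the segment active at time $n$ is not the last one ($i_0<m(\switchpar)$ in the notation of Section~\ref{sec:switchdefinition}), and $B_k^{(n)} \subseteq \switchpars'$ is the analogous set for which the current segment is the last ($i_0 = m(\switchpar)$). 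By construction $A_k^{(n)} \cup B_k^{(n)}$ is exactly the set of $\switchpar \in \switchpars'$ with $K_n(\switchpar)=k$, so $a_k^{(n)}+b_k^{(n)}=\pswitch(K_n=k, x^{n-1})$. The claim I would prove is that just before line~\ref{line:posterior} in iteration $n$, $w_k^a = a_k^{(n)}$ and $w_k^b = b_k^{(n)}$; the theorem then follows immediately from lines~\ref{line:posterior} and~\ref{line:last_posterior}.

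\textbf{Base case.} For $n=1$, $q_\switchpar(x^0)=1$; summing $\pi(\switchpar)=\pim(m)\pik(k_1)\prod_{i\geq 2}\pit(t_i\mid t_i>t_{i-1})\pik(k_i)$ over all extensions with $k_1=k$ collapses the tail using that $\pit$ and $\pik$ are probability distributions, leaving $\pik(k)\sum_{m\geq 2}\theta^{m-1}(1-\theta)=\pik(k)\theta$ for $A_k^{(1)}$ and $\pik(k)(1-\theta)$ for $B_k^{(1)}$, matching the initialization on line~\ref{line:init}.

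\textbf{Inductive step.} Assume the invariant holds at the start of iteration $n$. The loss update on line~\ref{line:loss_update} is justified by the fact that for every $\switchpar$ with $K_n(\switchpar)=k$ the density $q_\switchpar(x_n\mid x^{n-1})$ equals $p_k(x_n\mid x^{n-1})$, so multiplying $w_k^a$ and $w_k^b$ by $p_k(x_n\mid x^{n-1})$ replaces $q_\switchpar(x^{n-1})$ by $q_\switchpar(x^n)$ inside the sums. For the share update on lines~\ref{line:share_update_start}--\ref{line:share_update_end}, I would decompose every $\switchpar$ contributing to $a_k^{(n+1)}$ or $b_k^{(n+1)}$ according to whether its segment at sample size $n$ equals its segment at sample size $n+1$ (no switch occurred at step $n$, i.e.\ $t_{i_0+1}(\switchpar)>n$) or a switch occurred exactly at step $n$ (i.e.\ $t_{i_0+1}(\switchpar)=n$ and $k_{i_0+1}(\switchpar)=k$). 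The ``no switch'' contributions come from $\switchpar$ that were already contributing to $a_k^{(n)}$ (with the next switch still pending) or $b_k^{(n)}$ (no further switches), explaining the $\ind_{\mathcal K_n}(k)$ terms and the survival factor $\pit(Z\neq n\mid Z\geq n)$ in front of $w_k^a$. The ``switch at $n$'' contributions come collectively from $\sum_{k'}a_{k'}^{(n)}$, each weighted by the hazard $\pit(Z=n\mid Z\geq n)$ that gives the pool, then split by the prior $\pik(k)\theta$ (new segment not last) or $\pik(k)(1-\theta)$ (new segment last). The crucial identity behind the hazard factor is $\Pr(t_{i_0+1}=n \mid t_{i_0+1}\geq n, t_{i_0+1}>t_{i_0}) = \pit(n)/\pit(T\geq n) = \pit(Z=n\mid Z\geq n)$ for any $t_{i_0}<n$, which follows because the normalizer $\pit(T>t_{i_0})$ cancels. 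Extensions into further segments beyond the next one telescope to $1$ using normalization of $\pit$ and $\pik$, provided the geometric form of $\pim$ is used to absorb each future ``continue'' event into a factor of $\theta$ at the moment it is created—this is precisely what the $\theta/(1-\theta)$ split in the share update does.

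\textbf{Main obstacle.} The only delicate bookkeeping is the share update: verifying that the hazard-rate rewriting is exact (so that the conditioning $t_{i_0+1}>t_{i_0}$ leaves no residual dependence on $t_{i_0}$) and that the contributions from arbitrarily long tails of future switch points cleanly factor into the $\theta$ and $1-\theta$ coefficients attached to the pool. Once these two reductions are made explicit, the inductive step becomes a straightforward partition of $A_k^{(n+1)}$ and $B_k^{(n+1)}$ into the ``survival'' and ``switch'' pieces described above, and the theorem follows.
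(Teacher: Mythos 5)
Your invariant—splitting the posterior weight on $\{K_n=k\}$ into $A_k^{(n)}$ (current segment not the last) and $B_k^{(n)}$ (current segment last)—coincides exactly with the paper's decomposition into the events $\{M_n=0,K_n=k\}$ and $\{M_n=1,K_n=k\}$, and your base case and inductive step (loss update, hazard-rate survival/switch decomposition, geometric telescoping of the tail) reproduce the paper's argument. The paper simply packages the hazard-rate and telescoping computations you describe as "the only delicate bookkeeping" into a separate lemma (Lemma~\ref{lemma:preqprior}) showing $\pi(\expert_{n+1}\mid\expert^n)=\pi(\expert_{n+1}\mid M_n,K_n)$; otherwise the approach is the same.
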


Note that the posterior $\pi(K_{N+1} \mid x^N)$ and the marginal likelihood
$\pswitch(x^N)$ can both be computed from $\pswitch(K_{N+1},x^N)$ in
$\Theta(|\allowedk_{N+1}|)$ time. The theorem is proved in
Appendix~\ref{sec:proofalgo}.

\section{Relevance and Earlier Work}\label{sec:relevance} 
Over the last 25 years or so, the question whether to base model
selection on AIC or BIC type methods has received a lot of attention
in the theoretical and applied statistics literature, as well as in
fields such as psychology and biology where model selection plays an
important role (googling ``AIC'' \emph{and} ``BIC'' gives 355000 hits)
\citep{SpeedY93,HansenY01,HansenY02,BarronYY94,Forster01,DeLunaS03,Sober04}.
It has even been suggested that, since these two types of methods have
been designed with different goals in mind (optimal prediction vs.
``truth hunting''), it may simply be the case that {\em no\/} procedures exist
that combine the best of both types of approaches \citep{Sober04}. Our
Theorem~\ref{thm:consistencyagain}, Theorem~\ref{thm:switchrsy} and
our results in Section~\ref{sec:nonparametricb} show that, at least in
some cases, one can get the best of both worlds after all, and model
averaging based on $\Pswitch$ achieves the minimax optimal convergence
rate. In typical parametric settings $(P^* \in \model)$, model
selection based on $\Pswitch$ is consistent, and
Lemma~\ref{lem:parametric} suggests that model averaging based on
$\Pswitch$ is within a constant factor of the minimax optimal rate in
parametric settings.
\subsection{A Contradiction with Yang's Result?}
\label{sec:aicbic}
Superficially, our
results may seem to contradict the central conclusion of Yang
\citep{Yang05a}.  Yang shows that there are scenarios in linear
regression where no model selection or model combination criterion can
be both consistent and achieve the minimax rate of convergence.

Yang's result is proved for a variation of linear regression in which
the estimation error is measured on the previously observed design
points. This setup cannot be directly embedded in our framework. Also,
Yang's notion of model combination is somewhat different from the
model averaging that is used to compute $\Pswitch$. Thus, formally,
there is no contradiction between Yang's results and ours. Still, the
setups are so similar that one can easily imagine a variation of
Yang's result to hold in our setting as well. Thus, it is useful to
analyze how these ``almost'' contradictory results may coexist. We
suspect (but have no proof) that the underlying reason is the
definition of our minimax convergence rate in Ces\`aro mean
(\ref{eq:mmfixed}) in which $P^*$ is allowed to depend on $n$, but
then the risk with respect to that same $P^*$ is summed over all
$i=1,\ldots,n$. In contrast, Yang uses the standard definition of
convergence rate, without summation. Yang's result holds in a
parametric scenario, where there are two nested parametric models, and
data are sampled from a distribution in one of them. Then both
$\Gmmfix$ and $\Gmmvar$ are of the same order $\log n$. Even so, it
may be possible that there does exist a minimax optimal procedure that
is also consistent, relative to the $\Gmmfix$-game, in which $P^*$ is
kept fixed once $n$ has been determined, while there does not exist a
minimax optimal procedure that is also consistent, relative to the
$\Gmmvar$-game, in which $P^*$ is allowed to vary. We conjecture that
this explains why Yang's result and ours can coexist: in {\em
  parametric\/} situations, there exist procedures (such as
$\Pswitch$) that are both consistent and achieve $\Gmmfix$, but there
exist no procedures that are both consistent and achieve $\Gmmvar$. We
suspect that the qualification ``parametric'' is essential here:
indeed, we conjecture that in the standard {\em nonparametric\/} case,
whenever $\Pswitch$ achieves the fixed-$P^*$ minimax rate $\Gmmfix$, it
also achieves the varying-$P^*$ minimax rate $\Gmmvar$. The reason for
this conjecture is that, under the standard nonparametric assumption,  whenever $\Pswitch$ achieves
$\Gmmfix$, a small modification of $\Pswitch$ will achieve $\Gmmvar$.
Indeed, define the {\em Ces\`aro-switch distribution\/} as
\begin{equation}
  \Pcesaroswitch(x_{n} \mid x^{n-1})
    := \frac{1}{n} \sum_{i=1}^n \Pswitch(x_n \mid x^{i-1}).
\end{equation}
\begin{proposition}
\label{prop:yang}
$\Pcesaroswitch$ achieves the varying-$P^*$-minimax rate
whenever $\Pswitch$ achieves the fixed-$P^*$-minimax rate.
\end{proposition}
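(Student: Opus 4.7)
The plan is to bound $\risk_n(P^*,\Pcesaroswitch)$ by $\crisk_n(P^*,\Pswitch)/n$ using nothing more than convexity of KL divergence in its second argument and the i.i.d.\ structure that the standard nonparametric assumption imposes on $P^*$. I read $\Pswitch(x_n\mid x^{i-1})$ as the one-step-ahead predictive density of $\Pswitch$ based on $x^{i-1}$, evaluated at the value $x_n$; with this reading, $\Pcesaroswitch(\cdot\mid x^{n-1})$ is genuinely the arithmetic mean of the $n$ densities $\Pswitch(\cdot\mid x^{i-1})$ on $\samplespace$, $i=1,\ldots,n$.

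The main computation has three steps. First, by the log-sum inequality (equivalently, convexity of $D(P\|\cdot)$),
\begin{equation*}
D\bigl(P^*(X_n=\cdot\mid x^{n-1})\,\big\|\,\Pcesaroswitch(\cdot\mid x^{n-1})\bigr)\le\frac{1}{n}\sum_{i=1}^{n}D\bigl(P^*(X_n=\cdot\mid x^{n-1})\,\big\|\,\Pswitch(\cdot\mid x^{i-1})\bigr).
\end{equation*}
Second, the standard nonparametric assumption gives $P^*(X_n=\cdot\mid x^{n-1})=P^*(X_i=\cdot\mid x^{i-1})$ by i.i.d., and the $i$-th summand on the right depends on $x^{n-1}$ only through its prefix $x^{i-1}$, so that taking $\mathbb{E}_{X^{n-1}\sim P^*}$ produces
\begin{equation*}
\risk_n(P^*,\Pcesaroswitch)\le\frac{1}{n}\sum_{i=1}^{n}\risk_i(P^*,\Pswitch)=\frac{\crisk_n(P^*,\Pswitch)}{n}.
\end{equation*}
Third, taking $\sup_{P^*\in\model^*}$ on both sides and applying the hypothesis $\Gsw(n)\preceq\Gmmfix(n)$ yields $\sup_{P^*}\risk_n(P^*,\Pcesaroswitch)\preceq\Gmmfix(n)/n$.

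To convert this into the claim, I would invoke Proposition~\ref{prop:varfix} to obtain $\Gmmfix(n)\asymp\Gmmvar(n)=\sum_{i=1}^{n}\gmm(i)$, and then use the standard nonparametric form $\gmm(n)\asymp n^{-\gamma}h_0(n)$ (with $h_0$ nondecreasing and $\gamma<1$) to conclude $\Gmmvar(n)\asymp n\,\gmm(n)$. Thus $\Gmmfix(n)/n\asymp\gmm(n)$, so $\Pcesaroswitch$ achieves the varying-$P^*$ minimax rate. The main obstacle is conceptual rather than technical: one must commit to the correct reading of the symbol $\Pswitch(x_n\mid x^{i-1})$ for $i<n$ (as a one-step predictive density of $\Pswitch$ evaluated at the value $x_n$, not as the $n$-th coordinate marginal of $\Pswitch(\cdot\mid x^{i-1})$), since under the marginal reading the chain rule for KL picks up an extra factor of $n$ and the resulting estimate is too weak to imply the minimax rate.
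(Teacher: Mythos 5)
Your argument is correct and is essentially the paper's own proof, which establishes the more general Proposition~\ref{prop:cesaro} (and specializes it to $\Pmm = \Pswitch$): the paper likewise uses convexity of $D(P\|\cdot)$ (citing it as ``Jensen's inequality as in Proposition~15.2 of \citep{grunwald2007}'') to obtain $\risk_n(P^*,\Pcesaroswitch) \leq \frac{1}{n}\sum_{i=1}^n \risk_i(P^*,\Pswitch)$, and then uses $\Gmmfix(n) \leq \Gmmvar(n) = \sum_{i=1}^n \gmm(i) \preceq n\,\gmm(n)$ exactly as you do, with the last step following from $\gmm(i) \asymp i^{-\gamma} h_0(i)$ and $h_0$ nondecreasing. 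The only cosmetic differences are that you unfold the Jensen step explicitly via the log-sum inequality together with the i.i.d. structure of $P^*$ (which the paper leaves implicit in its citation), and that you invoke the two-sided $\Gmmfix \asymp \Gmmvar$ from Proposition~\ref{prop:varfix} where the one-sided inequality $\Gmmfix \leq \Gmmvar$ already suffices; neither changes the substance. Your observation about the intended reading of $\Pswitch(x_n \mid x^{i-1})$ as a one-step predictive density is the right one and matches the usage underlying the paper's cited convexity result.
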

The proof of this proposition is similar to the proof of
Proposition~\ref{prop:varfix} and can be found in
Section~\ref{sec:varfix}. 
%TODO LATEN STAAN WANT HET BEWIJS STAAT NIET WAAR LEZER HET ZOU VERWACHTEN

Since, intuitively, $\Pcesaroswitch$ learns ``slower'' than
$\Pswitch$, we suspect that $\Pswitch$ itself achieves the
varying-$P^*$-minimax rate as well in the standard nonparametric case.
However, while in the nonparametric case, $\gmm(n) \asymp
\Gmmfix(n)/n$, in the parametric case, $\gmm(n) \asymp 1/n$ whereas
$\Gmmfix(n)/n \asymp (\log n)/n$. Then the reasoning underlying
Proposition~\ref{prop:yang} does not apply anymore, and
$\Pcesaroswitch$ may not achieve the minimax rate for varying $P^*$.
Then also $\Pswitch$ itself may not achieve this rate.  We suspect
that this is not a coincidence: Yang's result suggests that indeed, in
this parametric setting, $\Pswitch$, because it is consistent, {\em
  cannot\/} achieve this varying $P^*$-minimax optimal rate.
\subsection{Earlier Approaches to the AIC-BIC Dilemma}
Several other authors have provided procedures which have been
designed to behave like AIC whenever AIC is better, and like BIC
whenever BIC is better; and which empirically seem to do so; these
include {\em model meta-selection\/} \citep{DeLunaS03,Clarke97}, and
Hansen and Yu's {\em gMDL\/} version of MDL regression
\citep{HansenY01}; also the ``mongrel'' procedure of \citep{wong2004}
has been designed to improve on Bayesian model averaging for small
samples. Compared to these other methods, ours seems to be the first
that {\em provably\/} is both consistent and minimax optimal in terms
of risk, for some classes $\model^*$. The only other procedure that we know of for which somewhat
related results have been shown, is a version of cross-validation
proposed by \citet{Yang05b} to select between AIC and BIC in
regression problems.  Yang shows that a particular form of
cross-validation will asymptotically select AIC in case the use of AIC leads to
better predictions, and BIC in the case that BIC leads to better
predictions.  In contrast to Yang, we use a single paradigm
rather than a mix of several ones (such as AIC, BIC and
cross-validation) -- essentially our paradigm is just that of
universal individual-sequence prediction, or equivalently, the
individual-sequence version of predictive MDL, or equivalently,
Dawid's prequential analysis applied to the log scoring rule. Indeed,
our work has been heavily inspired by prequential ideas; in 
\citet{Dawid92b} it is already suggested that model selection should be
based on the {\em transient\/} behaviours in terms of sequential
prediction of the estimators within the models: one should select the
model which is optimal at the given sample size, and this will change
over time.  Although Dawid uses standard Bayesian mixtures of
parametric models as his running examples, he implicitly suggests that
other ways (the details of which are left unspecified) of combining
predictive distributions relative to parametric models may be
preferable, especially in the nonparametric case where the true
distribution is outside any of the parametric models under
consideration.
\subsection{Prediction with Expert Advice}
\label{sec:predictionexpert}
Since the switch distribution has been designed to perform well in a
setting where the optimal predictor $\bar{p}_k$ changes over time, our
work is also closely related to the algorithms for {\em tracking the
  best expert\/} in the universal prediction literature
\citep{HerbsterWarmuth1998,Vovk1999,volfwillems1998,MonteleoniJ04}.
However, those algorithms are usually intended for data that are
sequentially generated by a mechanism whose behaviour changes over
time. In sharp contrast, our switch distribution is especially
suitable for situations where data are sampled from a {\em fixed\/}
(though perhaps non-i.i.d.) source after all; the fact that one model
temporarily leads to better predictions than another is caused by the
fact that each ``expert'' $\m{p}_k$ has itself already been designed
as a universal predictor/estimator relative to some large set of
distributions $\model_k$. The elements of $\model_k$ may be viewed as
``base'' predictors/experts, and the $\m{p}_k$ may be thought of as
meta-experts/predictors. Because of this two-stage structure, which
meta-predictor $\m{p}_k$ is best changes over time, even though the
optimal base-predictor $\argmin_{p\in\model}\risk_n(p^*,p)$ does not
change over time.

If one of the considered prediction strategies $\m{p}_k$ makes the
best predictions eventually, our goal is to achieve consistent model
selection: the total number of switches should also remain bounded. To
this end we have defined the switch distribution such that positive
prior probability is associated with switching finitely often and
thereafter using $\m{p}_k$ for all further outcomes. We need this
property to prove that our method is consistent. Other dynamic expert
tracking algorithms, such as the \textsc{Fixed-Share} algorithm
\citep{HerbsterWarmuth1998}, have been designed with different goals in
mind, and as such they do not have this property. Not surprisingly
then, our results do not resemble any of the existing results in the
``tracking''-literature.

\section{The Catch-Up
  Phenomenon, Bayes and Cross-Validation}\label{sec:ketchup}
\subsection{The Catch-Up Phenomenon is Unbelievable! (According to BMA)} 
\label{sec:unbelievabletruth}
On page~\pageref{eq:bayesmargintro}
we introduced the marginal Bayesian 
distribution $\pbma(x^n) := \sum_k w(k) \m{p}_k(x^n)$.  If the
distributions $\m{p}_k$ are themselves Bayesian marginal distributions
as in~\eqref{eq:bayesint}, then $\pbma$ may be interpreted as (the
density corresponding to) a distribution on the data that reflects
some prior beliefs about the domain that is being modelled, as
represented by the priors $w(k)$ and $w_k(\theta)$.  If $w(k)$ and
$w_k(\theta)$ truly reflected some decision-maker's a priori beliefs,
then it is clear that the decision-maker would like to make sequential
predictions of $X_{n+1}$ given $X^n = x^n$ based on $\pbma$ rather
than on $\pswitch$. Indeed, as we now show, the catch-up phenomenon as
depicted in Figure~\ref{fig:markovexample} is exceedingly unlikely to
take place under $\pbma$, and {\em a priori\/} a subjective Bayesian
should be prepared to bet a lot of money that it does not occur. To see
this, consider the {\em no-hypercompression inequality\/}
\citep{grunwald2007}, versions of which are also known as ``Barron's
inequality'' \citep{Barron85} and ``competitive optimality of the
Shannon-Fano code'' \citep{cover1991}. It states that for any two
distributions $P$ and $Q$ for $X^{\infty}$, the $P$-probability that
$Q$ outperforms $P$ by $k$ bits or more when sequentially predicting
$X_1, X_2, \ldots$ is exponentially small in $k$: for each $n$,
$$
P(- \log q(X^n) \leq - \log p(X^n) - k ) \leq 2^{-k}.
$$
Plugging in $\pbma$ for $p$, and $\pswitch$ for $q$, we see that what
happened in Figure~\ref{fig:markovexample} ($\pswitch$ outperforming
$\pbma$ by about 40000 bits) is an event with probability no more than
$2^{-40000}$ according to $\pbma$. Yet, in many practical situations,
the catch-up phenomenon does occur and $\pswitch$ gains significantly
compared to $\pbma$.  This can only be possible because either the
models are wrong (clearly, The Picture of Dorian Gray has not been
drawn randomly from a finite-order Markov chain), or because the
priors are ``wrong'' in the sense that they somehow don't match the
situation one is trying to model. For this reason, some subjective
Bayesians, when we confronted them with the catch-up phenomenon, have
argued that it is just a case of ``garbage in, garbage out'' (GIGO):  when the phenomenon occurs, then, rather than using the
switch distribution, one should reconsider the model(s) and prior(s)
one wants to use, and, once one has found a superior model $\model'$
and prior $w'$, one should use $\pbma$ relative to $\model'$ and
$w'$. Of course we agree that {\em if\/} one can come up with better
models, one should of course use them. Nevertheless, we strongly  
disagree with the GIGO point of view: We are convinced that in
practice, ``correct'' priors may be impossible to obtain; similarly,
people are forced to work with ``wrong'' models all the time. In such
cases, rather than embarking on a potentially never-ending quest for
better models, the hurried practitioner may often prefer to use the
imperfect -- yet still useful -- models that he has available, {\em in
  the best possible manner}. And then it makes sense to use $\pswitch$
rather than the Bayesian $\pbma$: the best
one can hope for in general is to regard the distributions in one's
models as prediction strategies, and try to predict as well as the
best strategy contained in any of the models, and $\pswitch$ is better
at this than $\pbma$. Indeed, the catch-up phenomenon raises
some interesting questions for Bayes factor model selection: no matter
what the prior is, by the no-hypercompression inequality above with $p
= \pbma$ and $q = \pswitch$, when comparing two models $\model_1$ and
$\model_2$, before seeing any data, a Bayesian {\em always\/} believes
that the switch distribution will not substantially outperform
$\pbma$, which implies that a Bayesian {\em cannot\/} believe that,
with non-negligible probability, a complex model $\m{p}_2$ can at
first predict substantially worse than a simple model $\m{p}_1$ and
then, for large samples, can predict substantially better. Yet in
practice, this happens all the time!

\subsection{Nonparametric Bayes}
A more interesting subjective Bayesian argument against the switch
distribution would be that, in the nonparametric setting, the data are
sampled from some $P^* \in \model^*\setminus\model$, and is not
contained in any of the parametric models $\model_1, \model_2, \ldots$
Yet, under the standard hierarchical prior used in $\pbma$ (first a
discrete prior on the model index, then a density on the model
parameters), we have that with prior-probability 1, $P^*$ is
``parametric'', i.e.\ $P^* \in \model_k$ for some $k$. Thus, our prior
distribution is not really suitable for the situation that we are
trying to model in the nonparametric setting, and we should use a
nonparametric prior instead. While we completely agree with this
reasoning, we would immediately like to add that the question then
becomes: what nonparametric prior {\em should\/} one use?
Nonparametric Bayes has become very popular in recent years, and it
often works surprisingly well. Still, its practical and theoretical
performance strongly depends on the type of priors that are used, and
it is often far from clear what prior to use in what situation. In
some situations, some nonparametric priors achieve optimal rates of
convergence, but others can even make Bayes inconsistent
\citep{DiaconisF86,grunwald2007}. The advantage of the
switch distribution is that it does not require any difficult modeling
decisions, but nevertheless under reasonable conditions it achieves
the optimal rate of convergence in nonparametric settings, and, in the
special case where one of the models on the list in fact approximates
the true source extremely well, this model will in fact be identified
(Theorem~\ref{thm:consistencyagain}). In fact, one may think of
$\pswitch$ as specifying a very special kind of nonparametric prior,
and under this interpretation, our results are in complete agreement
with the nonparametric Bayesian view.

\subsection{Leave-One-Out Cross-Validation} From the other side of the
spectrum, it has sometimes been argued that consistency is irrelevant,
since in practical situations, the true distribution is never in any
of the models under consideration. Thus, it is argued, one should use
AIC-type methods such as leave-one-out cross-validation, because of
their predictive optimality. We strongly disagree with this argument,
for several reasons: first, in practical model selection problems, one
is often interested in questions such as ``does $Y$ depend on feature
$X_k$ or not?'' For example, $\model_{k-1}$ is a set of conditional
distributions in which $Y$ is independent of $X_k$, and $\model_{k}$
is a superset thereof in which $Y$ can be dependent on $X_k$.  There
are certainly real-life situations where some variable $X_j$ is truly
completely irrelevant for predicting $Y$, and it may be the primary
goal of the scientist to find out whether or not this is the case. In
such cases, we would hope our model selection criterion to select, for
large $n$, $\model_{k-1}$ rather than $\model_{k}$, and the problem
with the AIC-type methods is that, because of their inconsistency,
they sometimes do not do this. In other words, we think that
consistency does matter, and we regard it as a clear advantage of the
switch distribution that it is consistent.

A second advantage over leave-one-out cross-validation is that the
switch distribution, like Bayesian methods, satisfies Dawid's {\em
  weak prequential principle\/} \citep{Dawid92b,grunwald2007}: the
switch distribution assesses the quality of a predictor $\m{p}_k$ only
in terms of the quality of predictions {\em that were actually made}.
To apply LOO on a sample $x_1, \ldots, x_n$, one needs to know the
prediction for $x_i$ given $x_1, \ldots, x_{i-1}$, but also $x_{i+1},
\ldots, x_n$. In practice, these may be hard to compute, unknown or
even unknowable. An example of the first are non-i.i.d.\ settings such
as time series models. An example of the second is the case where the
$\m{p}_k$ represent, for example, weather forecasters, or other
predictors which have been designed to predict the future given the
past. Actual weather forecasters use computer programs to predict the
probability that it will rain the next day, given a plethora of data
about air pressure, humidity, temperature etc. and the pattern of rain
in the past days. It may simply be impossible to apply those programs
in a way that they predict the probability of rain today, given data
about tomorrow.

\section{Conclusion and Future Work}
We have identified the catch-up phenomenon as the underlying reason
for the slow convergence of Bayesian model selection and averaging.
Based on this, we have defined the switch distribution $\Pswitch$, a
modification of the Bayesian marginal distribution which is
consistent, but also under broad conditions achieves a minimax optimal
convergence rate, thus resolving the AIC-BIC dilemma.
\begin{enumerate}
\item Since $\pswitch$ can be computed in practice, the approach can
  readily be tested with real and simulated data in both density
  estimation and regression problems. Initial results on simulated
  data, on which we will report elsewhere, give empirical evidence
  that $\pswitch$ behaves remarkably well in practice.  Model
  selection based on $\pswitch$, like for $\pbma$, typically
  identifies the true distribution at moderate sample
  sizes. Prediction and estimation based on $\Pswitch$ is of
  comparable quality to leave-one-out cross-validation (LOO) and
  generally, in no experiment did we find that it behaved
  substantially worse than either LOO or AIC.

\item It is an interesting open question whether there is an analogue
  of Lemma~\ref{lem:rateofconvergence} and Theorem~\ref{thm:switchrsy} for
  model {\em selection\/} rather than averaging. In other words, in
  settings such as histogram density estimation where model averaging
  based on the switch distribution achieves the minimax convergence
  rate, does model selection based on the switch distribution achieve
  it as well? For example, in Figure~\ref{fig:markovexample},
  sequentially predicting by the $\m{p}_{K_{n+1}}$ that has maximum a
  posteriori probability (MAP) under the switch distribution given
  data $x^n$, is only a few bits worse than predicting by model
  averaging based on the switch distribution, and still outperforms
  standard Bayesian model averaging by about $40\,000$ bits. In the
  experiments mentioned above, we invariably found that predicting by
  the MAP $\m{p}_{K_{n+1}}$ empirically converges at the same rate as
  using model averaging, i.e.\ predicting by $\Pswitch$.  However, we
  have no proof that this really must always be the case.  Analogous
  results in the MDL literature suggest that a theorem bounding the
  risk of switch-based model selection, if it can be proved at all,
  would bound the squared Hellinger rather than the KL risk
  \citep[Chapter 15]{grunwald2007}.
\item The way we defined $\Pswitch$, it does not seem suitable for
  situations in which the number of considered models or model
  combinations is exponential in the sample size. Because of condition
  (i) in Lemma~\ref{lem:rateofconvergence}, our theoretical results do not
  cover this case either. Yet this case is highly important in
  practice, for example, in the subset selection problem
  \citep{Yang99}. It seems clear that the catch-up phenomenon can and
  will also occur in model selection problems of that type. Can our
  methods be adapted to this situation, while still keeping the
  computational complexity manageable? And what is the relation with the
  popular and computationally efficient $L_1$-approaches to model
  selection \citep{Tibshirani96}?
\end{enumerate}
\section*{Acknowledgements}
We thank Peter Harremo\"es for his crucial help in the proof of
Theorem~\ref{thm:consistencyagain}, and Wouter Koolen for pointing out
a serious error in the proof and interpretation of Theorem 2 of the
preliminary version \citep{ErvenGR07} of (a part of) this paper (This
error had gone unnoticed by the reviewers).  We are very grateful to
Yishay Mansour, who made a single remark over lunch at COLT 2005 that
sparked off all this research, and Andrew Barron for some very helpful
conversations.  This work was supported in part by the IST Programme
of the European Community, under the PASCAL Network of Excellence,
IST-2002-506778. This publication only reflects the authors' views.

\appendix
\section{Proofs}
\label{sec:proofs}
\subsection{Proof  of Theorem~\ref{thm:consistencyagain} }

Let $U_n = \{ \switchpar \in \switchpars \mid K_{n+1}(\switchpar) \neq
k^*\}$ denote the set of ``bad'' parameters $\switchpar$ that
select an incorrect model. It is sufficient to show that
\begin{equation}
  \label{eqn:badcasesdie}
  \lim_{n \to \infty} \frac{\sum_{\switchpar \in U_n}
        \pi\big(\switchpar\big) q_{\switchpar}(X^n)}
      {\sum_{\switchpar \in \switchpars}
        \pi\big(\switchpar\big) q_{\switchpar}(X^n)} 
  = 0
  \quad \quad \text{with $\Pbayes_{k^*}$-probability 1.}
\end{equation}
To see this, first note that \eqref{eqn:badcasesdie} is almost
equivalent to \eqref{eqn:thmconsistency}. The difference is that
$P_{\theta^*}$-probability has been replaced by
$\Pbayes_{k^*}$-probability. Now suppose the theorem is false. Then
there exists a set of parameters $\Phi \subseteq \Theta_{k^*}$ with
$w_{k^*}(\Phi) > 0$ such that \eqref{eqn:thmconsistency} does not hold
for any $\theta^* \in \Phi$. But then by definition of $\Pbayes_{k^*}$
we have a contradiction with \eqref{eqn:badcasesdie}.

To show \eqref{eqn:badcasesdie}, let $A = \{\switchpar\in \switchpars :
k_m(\switchpar) \neq k^*\}$ denote the set of parameters that are bad
for all sufficiently large $n$. We observe that for each $\switchpar'\in
U_n$ there exists at least one element $\switchpar \in A$ that uses the
same sequence of switch-points and predictors on the first $n+1$
outcomes (this implies that $K_i(\switchpar)=K_i(\switchpar')$ for $i =
1, \ldots, n+1$) and has no switch-points beyond $n$ (i.e.\
$t_m(\switchpar) \leq n$). Consequently, either $\switchpar'=\switchpar$
or $\switchpar'\in E_\switchpar$. Therefore
\begin{equation}
  \label{eqn:onlyAmatters}
  \sum_{\switchpar' \in U_n} \pi(\switchpar') q_{\switchpar'}(x^n)
  ~\leq~ \sum_{\switchpar \in A} \left(\pi(\switchpar) +
      \pi(E_{\switchpar})\right) q_{\switchpar}(x^n) 
  ~\leq~ (1+c) \sum_{\switchpar \in A} \pi(\switchpar) q_{\switchpar}(x^n).
\end{equation}
Defining the mixture $r(x^n) = \sum_{\switchpar \in A} \pi(\switchpar)
q_\switchpar(x^n)$, we will show that
\begin{equation}
  \label{eqn:Adies}
  \lim_{n \to \infty} \frac{r(X^n)}
    {\pi(\switchpar=(0,k^*))\cdot \pbayes_{k^*}(X^n)} = 0
  \quad \quad \text{with $\Pbayes_{k^*}$-probability 1.}
\end{equation}
Using \eqref{eqn:onlyAmatters} and the fact that
 $ \sum_{\switchpar \in \switchpars} \pi(\switchpar) q_{\switchpar}(x^n)
    \geq \pi(\switchpar = (0,k^*))\cdot \pbayes_{k^*}(x^n)$,
this implies \eqref{eqn:badcasesdie}. 

For all $\switchpar\in A$ and $x^{t_m(\switchpar)} \in
\samplespace^{t_m(\switchpar)}$, by definition
$Q_{\switchpar}(X_{t_m+1}^\infty|x^{t_m})$ equals
$\Pbayes_{k_m}(X_{t_m+1}^\infty|x^{t_m})$, which is mutually singular
with $\Pbayes_{k^*}(X_{t_m+1}^\infty|x^{t_m})$ by assumption. If
$\samplespace$ is a separable metric space, which holds because
$\samplespace \subseteq \reals^d$ for some $d \in \posints$, it can be
shown that this conditional mutual singularity implies mutual
singularity of $Q_{\switchpar}(X^\infty)$ and
$\Pbayes_{k^*}(X^\infty)$. To see this for countable $\samplespace$,
let $B_{x^{t_m}}$ be any event such that
$Q_{\switchpar}(B_{x^{t_m}}|x^{t_m})=1$ and
$\Pbayes_{k^*}(B_{x^{t_m}}|x^{t_m})=0$. Then, for $B = \{y^\infty \in
\samplespace^\infty \mid
y_{t_m+1}^\infty \in B_{y^{t_m}}\}$, we have that $Q_\switchpar(B) = 1$
and $\Pbayes_{k^*}(B) = 0$. In the uncountable case, however, $B$ may
not be measurable. In that case, the proof follows by Corollary~\ref{cor:harremoes} proved in Section~\ref{sec:harremoes}.
Any countable mixture of distributions that are mutually singular with
$P_{k^*}$, in particular $R$, is mutually singular with $P_{k^*}$. This
implies \eqref{eqn:Adies} by Lemma~3.1 of \citep{Barron85}, which says
that for any two mutually singular distributions $R$ and $P$, the
density ratio $r(X^n)/p(X^n)$ goes to zero as $n \rightarrow \infty$
with $P$-probability $1$.\qed

\subsection{Proof of Theorem~\ref{thm:consistencyb}}

The proof is almost identical to the proof of
Theorem~\ref{thm:consistencyagain}. Let $U_n = \{ \switchpar \in
\switchpars \mid K_{n+1}(\switchpar) \neq k^*\}$ denote the set of
``bad'' parameters $\switchpar$ that select an incorrect model. It is
sufficient to show that
\begin{equation}
  \label{eqn:badcasesdieb}
  \lim_{n \to \infty} \frac{\sum_{\switchpar \in U_n}
        \pi\big(\switchpar\big) q_{\switchpar}(X^n)}
      {\sum_{\switchpar \in \switchpars}
        \pi\big(\switchpar\big) q_{\switchpar}(X^n)} 
  = 0
  \quad \quad \text{with $\mbayes{P}_{k^*}$-probability 1.}
\end{equation}
Note that the $q_{\switchpar}$ in (\ref{eqn:badcasesdieb}) are defined
relative to the non-Bayesian estimators $\m{p}_1, \m{p}_2, \ldots$,
whereas the $\mbayes{P}_{k^*}$ on the right of the equation is the
probability according to a \emph{Bayesian} marginal distribution
$\mbayes{P}_{k^*}$, which has been chosen so that the theorem's
condition holds. To see that (\ref{eqn:badcasesdieb}) is sufficient to
prove the theorem, suppose the theorem is false. Then, because the prior
$w_{k^*}$ is mutually absolutely continuous with Lebesgue measure, there
exists a set of parameters $\Phi \subseteq \Theta_{k^*}$ with nonzero
prior measure under $w_{k^*}$, such that \eqref{eqn:thmconsistencyb}
does not hold for any $\theta^* \in \Phi$. But then by definition of
$\mbayes{P}_{k^*}$ we have a contradiction with
\eqref{eqn:badcasesdieb}.

Using exactly the same reasoning as in the proof of
Theorem~\ref{thm:consistencyagain}, it follows that, analogously to
(\ref{eqn:Adies}), we have
\begin{equation}
  \label{eqn:Adiesb}
  \lim_{n \to \infty} \frac{r(X^n)}
    {\pi(\switchpar=(0,k^*))\cdot \mbayes{p}_{k^*}(X^n)} = 0
  \quad \quad \text{with $\mbayes{P}_{k^*}$-probability 1.}
\end{equation}
This is just (\ref{eqn:Adies}) with $r$ now referring to a mixture of
combinator prediction strategies defined relative to the non-Bayesian
estimators $\m{p}_1, \m{p}_2, \ldots$, and the $\mbayes{p}_{k^*}$ in the
denominator and on the right referring to the Bayesian marginal
distribution $\mbayes{P}_{k^*}$. Using \eqref{eqn:onlyAmatters} and the
fact that
 $ \sum_{\switchpar \in \switchpars} \pi(\switchpar) q_{\switchpar}(x^n)
    \geq \pi(\switchpar = (0,k^*))\cdot \pbayes_{k^*}(x^n)$,
    and the fact that, by assumption, for some $K$, for all large $n$,
    $\pbayes_{k^*}(X^n) \geq
    \mbayes{p}_{k^*}(X^n)2^{-K}$ with $\mbayes{P}_{k^*}$-probability 1,
    (\ref{eqn:Adiesb}) implies (\ref{eqn:badcasesdieb}).  \qed

\subsection{Mutual Singularity as Used in the Proof of Theorem~\ref{thm:consistencyagain}}
\label{sec:harremoes}
\newcommand{\algebra}{\ensuremath{\mathcal{A}}}
\newcommand{\borel}{\ensuremath{\mathcal{B}}}

Let $Y^2 = (Y_1$, $Y_2)$ be random variables that take values in
separable metric spaces $\Omega_1$ and $\Omega_2$, respectively. We
will assume all spaces to be equipped with Borel $\sigma$-algebras
generated by the open sets. Let $p$ be a prediction strategy for $Y^2$
with corresponding distributions $P(Y_1)$ and, for any $y^1 \in
\Omega_1$, $P(Y_2|y^1)$. To ensure that $P(Y^2)$ is well-defined, we
impose the requirement that for any fixed measurable event $A_2
\subseteq \Omega_2$ the probability $P(A_2 | y^1)$ is a measurable
function of $y^1$.

\begin{lemma}
  \label{lem:eventuallysingular}
  Suppose $p$ and $q$ are prediction strategies for $Y^2 = (Y_1, Y_2)$,
  which take values in separable metric spaces $\Omega_1$ and
  $\Omega_2$, respectively. Then if $P(Y_2|y^1)$ and $Q(Y_2|y^1)$ are
  mutually singular for all $y^1 \in \Omega_1$, then $P(Y^2)$ and
  $Q(Y^2)$ are mutually singular.
\end{lemma}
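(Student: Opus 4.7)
The plan is to produce a measurable set $B \subseteq \Omega_1 \times \Omega_2$ on which $P(Y^2)$ concentrates and $Q(Y^2)$ vanishes. The naive construction---taking the union over $y^1$ of the pointwise witnesses $B_{y^1}$ provided by singularity of $P(Y_2|y^1)$ and $Q(Y_2|y^1)$---can fail to be measurable, so I will build $B$ instead from a joint Radon-Nikodym derivative on the product space.

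Form the finite measure $R := P(Y^2) + Q(Y^2)$ on the Borel $\sigma$-algebra of $\Omega_1 \times \Omega_2$, which coincides with the product $\sigma$-algebra since both factors are separable metric. Both $P$ and $Q$ are absolutely continuous with respect to $R$, so Radon-Nikodym supplies a jointly measurable $f:\Omega_1\times\Omega_2\to[0,1]$ with $dP/dR = f$ and $dQ/dR = 1-f$ $R$-a.e. Set $B := \{f = 1\}$. Then $Q(B) = \int_B (1-f)\,dR = 0$, and the whole task reduces to showing $R(\{0<f<1\}) = 0$, since that gives $P(B^c) = \int_{B^c} f\,dR = \int_{\{f=0\}} f\,dR = 0$.

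To control $\{0<f<1\}$, I disintegrate over $\Omega_1$. The measurability requirement on prediction strategies turns $P(Y_2|y^1)$ and $Q(Y_2|y^1)$ into genuine probability kernels, so the sum $R(Y_2|y^1) := P(Y_2|y^1) + Q(Y_2|y^1)$ disintegrates $R$ over its marginal $R_1 := P_1 + Q_1$ on $\Omega_1$. Testing the identity $P = f\cdot R$ against measurable rectangles and invoking Fubini, one obtains, for $R_1$-almost every $y^1$,
\begin{equation*}
  f(y^1,\cdot) \,=\, p_1(y^1)\,\frac{dP(\cdot|y^1)}{dR(\cdot|y^1)} \quad R(\cdot|y^1)\text{-a.e.,}
\end{equation*}
where $p_1 := dP_1/dR_1$. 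Fix such a $y^1$ with $p_1(y^1)>0$. By hypothesis $P(\cdot|y^1) \perp Q(\cdot|y^1)$, so $R(\cdot|y^1) = p_1(y^1) P(\cdot|y^1) + q_1(y^1) Q(\cdot|y^1)$ is the sum of two mutually singular measures, which forces $dP(\cdot|y^1)/dR(\cdot|y^1)$ to take only the values $0$ and $1/p_1(y^1)$ up to $R(\cdot|y^1)$-null sets. Hence $f(y^1,\cdot) \in \{0,1\}$ $R(\cdot|y^1)$-almost surely; when $p_1(y^1)=0$ the density is trivially $0$. Integrating over $y^1$ yields $R(\{0<f<1\}) = 0$, and $B$ is the desired witness.

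The delicate point is the fiberwise identification of $f$ with a conditional density. Since the factors are only assumed separable metric, rather than Polish, one cannot blindly appeal to the most general disintegration theorems. What saves the argument is precisely the measurability condition built into the definition of a prediction strategy: it makes $P$, $Q$, and $R$ admit honest probability-kernel disintegrations over $\Omega_1$, and the identification of $f$ can then be checked on the generating family of measurable rectangles using Fubini and extended to all Borel sets by a routine monotone class argument.
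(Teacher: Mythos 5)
Your proof is correct, but it takes a genuinely different route from the paper's. The paper's argument is topological and constructive: it uses inner/outer regularity of Borel measures on a metric space and a countable base of $\Omega_2$ to build, for each $\epsilon$, a finite-union approximation $V^\epsilon_{\omega_1}$ of the fiberwise singularity witness; it then glues these over a measurable partition of $\Omega_1$ into a product-measurable set $F^\epsilon$ with $P(F^\epsilon)>1-\epsilon$, $Q(F^\epsilon)<\epsilon$, and extracts the final witness via a Borel--Cantelli-style $\liminf$. Your argument instead works at the level of densities: take $f=\mathrm{d}P/\mathrm{d}R$ on the product, reduce the claim to $f\in\{0,1\}$ $R$-a.e., and establish this fiberwise by identifying $f(y^1,\cdot)$ with a conditional density via the disintegration supplied by the prediction-strategy kernels. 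Both arguments use separability of $\Omega_2$, but differently: the paper through the countable topological base and regularity; you through countable generation of $\mathcal{B}(\Omega_2)$ in the monotone-class step that upgrades the fiberwise identity of $f$ from rectangles to all Borel sets. Your version is arguably cleaner and a bit more general (it would apply verbatim whenever $\mathcal B(\Omega_2)$ is countably generated and the kernels are measurable, without appealing to metric regularity), whereas the paper's is more elementary and avoids invoking Radon--Nikodym and disintegration machinery.

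One small slip in the write-up: the kernel you introduce as $R(Y_2\mid y^1):=P(Y_2\mid y^1)+Q(Y_2\mid y^1)$ has total mass $2$ and does \emph{not} disintegrate $R$ over $R_1$. The correct disintegrating kernel is $\nu(\cdot\mid y^1)=p_1(y^1)\,P(\cdot\mid y^1)+q_1(y^1)\,Q(\cdot\mid y^1)$ with $p_1=\mathrm{d}P_1/\mathrm{d}R_1$ and $q_1=1-p_1$, which is exactly what you use two lines later when you decompose $R(\cdot\mid y^1)$ as a singular mixture. With that notational correction the fiberwise identification $f(y^1,\cdot)=p_1(y^1)\,\mathrm{d}P(\cdot\mid y^1)/\mathrm{d}\nu(\cdot\mid y^1)$ and the conclusion $f\in\{0,1\}$ are both sound.
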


The proof, due to Peter Harremo\"es, is given below the following
corollary, which is what we are really interested in. Let $X^\infty =
X_1$, $X_2$, $\ldots$ be random variables that take values in the
separable metric space $\samplespace$. Then what we need in the proof
of Theorem~\ref{thm:consistencyagain} is the following corollary of
Lemma~\ref{lem:eventuallysingular}:
\begin{corollary}
\label{cor:harremoes}
  Suppose $p$ and $q$ are prediction strategies for the sequence of random variables
  $X^\infty = X_1$, $X_2$, $\ldots$ that take values in respective separable
  metric spaces $\samplespace_1$, $\samplespace_2$, $\ldots$ Let
  $m$ be any positive integer. Then if $P(X_{m+1}^\infty|x^m)$ and
  $Q(X_{m+1}^\infty|x^m)$ are mutually singular for all $x^m \in
  \samplespace^m$, then $P(X^\infty)$ and $Q(X^\infty)$ are mutually
  singular.
\end{corollary}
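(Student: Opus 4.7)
The plan is to reduce the corollary to a direct application of Lemma~\ref{lem:eventuallysingular} by grouping the first $m$ coordinates into a single ``pre-observation'' random variable and the remaining tail into a single ``post-observation'' random variable. Concretely, I would set $Y_1 = X^m$ with $\Omega_1 = \samplespace^m$, and $Y_2 = X_{m+1}^\infty$ with $\Omega_2 = \samplespace_{m+1}\times\samplespace_{m+2}\times\cdots$, so that $Y^2=(Y_1,Y_2)$ carries the same information as $X^\infty$.

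The first preliminary step is to check that $\Omega_1$ and $\Omega_2$ are separable metric spaces, as required by Lemma~\ref{lem:eventuallysingular}. For $\Omega_1$ this is immediate, since finite products of separable metric spaces are separable and metrizable. For $\Omega_2$, I would rescale each $d_i$ to be bounded by $1$ and use the standard product metric $d(x,y)=\sum_{i\ge 1} 2^{-i} d_{m+i}(x_i,y_i)$; this metrizes the product topology and is separable because a countable product of separable metric spaces is separable.

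Next, I would verify the hypotheses of Lemma~\ref{lem:eventuallysingular} for $(P,Q)$ viewed as prediction strategies for $(Y_1,Y_2)$. By construction $P(Y_2\mid y_1)=P(X_{m+1}^\infty\mid x^m)$ and similarly for $Q$, so mutual singularity of these conditionals for every $y_1=x^m\in\samplespace^m$ is exactly the hypothesis of the corollary. The only nontrivial point is the measurability requirement that, for every Borel $A_2\subseteq\Omega_2$, the map $x^m\mapsto P(A_2\mid x^m)$ is Borel measurable. I would establish this by a $\pi$-$\lambda$ (monotone class) argument: the paper's standing assumption on prediction strategies gives measurability of $x^m\mapsto P(A\mid x^m)$ whenever $A$ is a finite-dimensional cylinder of the form $\{y\in\Omega_2:y_1\in A_{m+1},\ldots,y_k\in A_{m+k}\}$ (by induction on $k$, using the factorisation $P(X_{m+1}^{m+k}\mid x^m)=\prod_{j=1}^{k}P(X_{m+j}\mid x^{m+j-1})$ and the fact that compositions/integrals of measurable kernels remain measurable). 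These cylinders form a $\pi$-system generating the Borel $\sigma$-algebra of $\Omega_2$, and the collection of events for which measurability holds is a $\lambda$-system (closed under complements and countable disjoint unions), so Dynkin's theorem extends measurability to all Borel events.

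Once these pieces are in place, Lemma~\ref{lem:eventuallysingular} applied to $(P,Q)$ on $\Omega_1\times\Omega_2$ yields mutual singularity of $P(Y^2)$ and $Q(Y^2)$, which is the desired mutual singularity of $P(X^\infty)$ and $Q(X^\infty)$. I expect the main obstacle to be the measurability verification in step three; the separability of $\Omega_2$ and the convenient factorised form of tail conditional probabilities under a prediction strategy make the $\pi$-$\lambda$ argument go through, but some care is needed to phrase it so that no pathologies of uncountable product $\sigma$-algebras intrude.
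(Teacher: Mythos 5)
Your proof takes essentially the same route as the paper: group $X^m$ into $\Omega_1 = \samplespace_1\times\cdots\times\samplespace_m$ and $X_{m+1}^\infty$ into $\Omega_2 = \samplespace_{m+1}\times\samplespace_{m+2}\times\cdots$, invoke separability of finite and countable products of separable metric spaces (the paper cites Parthasarathy for this), and apply Lemma~\ref{lem:eventuallysingular}. The paper's own proof stops there and does not address the measurability of $x^m\mapsto P(A_2\mid x^m)$ for Borel $A_2\subseteq\Omega_2$, so your $\pi$-$\lambda$ argument fills a small but genuine gap: the standing assumption in Section~\ref{sec:preliminaries} only gives measurability for one-step-ahead events, and extending it to the full tail $\sigma$-algebra does require the cylinder-set/Dynkin reasoning you sketch.
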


\begin{proof}
  The product spaces $\samplespace_1 \times \cdots \times
  \samplespace_m$ and $\samplespace_{m+1} \times \samplespace_{m+2}
  \times \cdots$ are separable metric spaces
  \cite[pp.\ 5,6]{parthasarathy1967}. Now apply
  Lemma~\ref{lem:eventuallysingular} with $\Omega_1 = \samplespace_1
  \times \cdots \times \samplespace_m$ and $\Omega_2 =
  \samplespace_{m+1} \times \samplespace_{m+2} \times \cdots$.
\end{proof}

\begin{proof}[Proof of Lemma~\ref{lem:eventuallysingular}]
  For each $\omega_1 \in \Omega_1$, by mutual singularity of
  $P(Y_2|\omega_1)$ and $Q(Y_2|\omega_1)$ there exists a
  measurable set $C_{\omega_1} \subseteq \Omega_2$ such that
  $P(C_{\omega_1}|\omega_1) = 1$ and $Q(C_{\omega_1}|\omega_1) = 0$. As
  $\Omega_2$ is a metric space, it follows from \cite[Theorems~1.1 and
  1.2 in Chapter~II]{parthasarathy1967} that for any $\epsilon > 0$
  there exists an open set $U_{\omega_1}^\epsilon \supseteq
  C_{\omega_1}$ such that
  \begin{equation}
    P(U_{\omega_1}^\epsilon|\omega_1) = 1
    \quad \text{and} \quad
    Q(U_{\omega_1}^\epsilon|\omega_1) < \epsilon.
  \end{equation}

  As $\Omega_2$ is a separable metric space, there also exists a
  countable sequence $\{B_i\}_{i\geq1}$ of open sets such that every
  open subset of $\Omega_2$ ($U_{\omega_1}^\epsilon$ in particular) can
  be expressed as the union of sets from $\{B_i\}$ \cite[Theorem~1.8 in
  Chapter I]{parthasarathy1967}.

  Let $\{B_i'\}_{i\geq1}$ denote a subsequence of $\{B_i\}$ such that
  $U_{\omega_1}^\epsilon = \Union_i B'_i$. Suppose $\{B_i'\}$ is a
  finite sequence. Then let $V_{\omega_1}^\epsilon =
  U_{\omega_1}^\epsilon$. Suppose it is not. Then $1 =
  P(U_{\omega_1}^\epsilon|\omega_1) = P(\Union_{i=1}^\infty
  B'_i|\omega_1) = \lim_{n \rightarrow \infty} P(\Union_{i=1}^n
  B'_i|\omega_1)$, because $\Union_{i=1}^n B'_i$ as a function of $n$ is
  an increasing sequence of sets. Consequently, there exists an $N$ such
  that $P(\Union_{i=1}^N B'_i|\omega_1) > 1 - \epsilon$ and we let
  $V_{\omega_1}^\epsilon = \Union_{i=1}^N B'_i$. Thus in any case there
  exists a set $V_{\omega_1}^\epsilon \subseteq U_{\omega_1}^\epsilon$
  that is a union of a finite number of elements in $\{B_i\}$ such that
  \begin{equation}
    P(V_{\omega_1}^\epsilon|\omega_1) > 1 - \epsilon
    \quad \text{and} \quad
    Q(V_{\omega_1}^\epsilon|\omega_1) < \epsilon.
  \end{equation}

  Let $\{D\}_{i\geq1}$ denote an enumeration of all possible unions of a
  finite number of elements in $\{B_i\}$ and define the disjoint
  sequence of sets $\{A^\epsilon_i\}_{i\geq1}$ by
  \begin{equation}
    A_i^\epsilon = \{\omega_1 \in \Omega_1 :
                      P(D_i|\omega_1) > 1 - \epsilon,
                      Q(D_i|\omega_1) < \epsilon\}
                  \setminus \Union_{j=1}^{i-1} A_j^\epsilon
  \end{equation}
  for $i = 1$, $2$, $\ldots$ Note that, by the reasoning above, for each
  $\omega_1 \in \Omega_1$ there exists an $i$ such that $\omega_1 \in
  A_i^\epsilon$, which implies that $\{A^\epsilon_i\}$ forms a partition
  of $\Omega_1$. Now, as all elements of $\{A^\epsilon_i\}$ and $\{D_i\}$
  are measurable, so is the set $F^\epsilon = \Union_{i=1}^\infty
  A_i^\epsilon \times D_i \subseteq \Omega_1 \times \Omega_2$, for which
  we have that $P(F^\epsilon) = \sum_{i=1}^\infty P(A_i^\epsilon \times
  D_i) > (1-\epsilon) \sum_{i=1}^\infty P(A_i) = 1-\epsilon$ and
  likewise $Q(F^\epsilon) < \epsilon$.

  Finally, let $G = \Intersection_{n=1}^\infty \Union_{k=n}^\infty
  F^{2^{-k}}$. Then $P(G) = \lim_{n \rightarrow \infty}
  P(\Union_{k=n}^\infty F^{2^{-k}}) \geq \lim_{n \rightarrow \infty} 1 -
  2^{-n} = 1$ and $Q(G) = \lim_{n \rightarrow \infty} Q(\Union_{k=n}
  F^{2^{-k}}) \leq \lim_{n \rightarrow \infty} \sum_{k=n}^\infty 2^{-k}
  = \lim_{n \rightarrow \infty} 2^{-n+1} = 0$, which proves the lemma.
\end{proof}
\subsection{Proofs of Section~\ref{sec:nonparametrica}}
\subsubsection{Proof of Lemma~\ref{lem:switchvsbma}}
%\begin{proof}
  For the first part we underestimate sums:
  \begin{align*}
    \pswitch(x^n)
      &=\sum_{m\in\posints}\sum_{\switchpar\in\switchpars:m(\switchpar)=m}
          q_\switchpar(x^n)\switchprior(\switchpar)
      ~\geq~\pim(1)\cdot \sum_{k'\in\posints} \pik(k')\pbayes_{k'}(x^n)
      ~=~\pim(1)\cdot\pbma(x^n),\\
    \pbma(x^n)
      &=\sum_{k'\in\posints}\pbayes_{k'}(x^n)\pik(k')
      ~\geq~\pik(k)\pbayes_k(x^n).
  \end{align*}
  We apply~\eqref{eq:redundancyrisk} to bound the difference in
  cumulative risk from above:
  \begin{alignat*}{4}
    \sum_{i=1}^n \risk_i(P^*,\Pswitch) &=
    E\left[\log\frac{p^*(X^n)}{\pswitch(X^n)}\right]&&\le E\left[\log\frac{p^*(X^n)}{\pim(1)\pbma(X^n)}\right]&&=\sum_{i=1}^n \risk_i(P^*,\Pbma)&&-\log\pim(1),\\
    \sum_{i=1}^n \risk_i(P^*,\Pbma) &=
    E\left[\log\frac{p^*(X^n)}{\pbma(X^n)}\right]&&\le
    E\left[\log\frac{p^*(X^n)}{\pik(k)\pbayes_k(X^n)}\right]&&=\sum_{i=1}^n \risk_i(P^*,\Pbayes_k)&&-\log\pik(k). \ \  \ \mbox{$\Box$}
\qedhere
  \end{alignat*}
%
%\end{proof}
%

\subsubsection{Proof of Theorem~\ref{thm:switchrsy}} 
We will prove a
slightly stronger version of the theorem, which shows that the
switch distribution in fact achieves the same multiplicative constant,
$A$, as is shown in \citep{rissanen1992} for the estimator that selects
$\ceil{n^{1/3}}$ bins:
\begin{equation}
  \sup_{P^* \in \model^*} \sum_{i=1}^n \risk_i(P^*,\Pswitch)
    \preceq_1 A\, n^{1/3}.
\end{equation}

The idea of the proof is to exhibit an oracle that closely approximates
the estimator $\m{P}_{\ceil{n^{1/3}}}$, but only switches a logarithmic
number of times in $n$ on the first $n$ outcomes, and then apply
Lemma~\ref{lem:rateofconvergence} to this oracle.

In \citep{rissanen1992} Equation~\ref{eqn:rsy1} is proved from the
following theorem, which gives an upper bound on the risk of any
prediction strategy that uses a histogram model with approximately
$\ceil{n^{1/3}}$ bins to predict outcome $X_{n+1}$:
\begin{theorem}
  \label{thm:rsy2}
  For any $\alpha \geq 1$
  \begin{equation}
    \max_{\ceil{(n/\alpha)^{1/3}} \leq k \leq \ceil{n^{1/3}}}
      \sup_{P^* \in \model^*} \risk_n(P^*, \m{P}_k)
      \preceq_1 \alpha^{2/3} C n^{-2/3},
    \label{eqn:rsy2}
  \end{equation}
  where $C > 0$ depends only on $c_2$ in \eqref{eqn:rsydensities}.
\end{theorem}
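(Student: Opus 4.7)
The plan is to prove Theorem~\ref{thm:rsy2} via a classical bias–variance decomposition of the instantaneous KL risk of the Laplace-smoothed histogram estimator~\eqref{eq:laplace}. Under $P^*\in\model^*$ the data are i.i.d.\ with density $p^*$, so the risk simplifies to $\risk_n(P^*,\m{P}_k) = E_{X^{n-1}\sim P^*}[D(p^*\|\m{p}_k(\cdot\mid X^{n-1}))]$, and I would work with this conditional KL divergence pointwise in $X^{n-1}$. Writing $B_j := ((j{-}1)/k,\,j/k]$, $\theta^*_j := \int_{B_j}p^*(x)\dif x$, and $p_{\theta^*}(x) := k\theta^*_j$ for $x\in B_j$ (the $L_1$-projection of $p^*$ onto $\model_k$), the pointwise split
\begin{equation*}
  D(p^*\|\m{p}_k(\cdot\mid X^{n-1}))
   \;=\; D(p^*\|p_{\theta^*})\;+\;D(\theta^*\|\hat\theta(X^{n-1}))
\end{equation*}
is exact, because both $p_{\theta^*}$ and $\m{p}_k(\cdot\mid X^{n-1})$ are constant on each $B_j$, so the cross-integral collapses to a discrete KL divergence between the $k$-dimensional vectors $\theta^*$ and $\hat\theta_j = (n_j(X^{n-1})+1)/(n-1+k)$. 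The first term is a deterministic bias; only the second is random.

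For the bias I would invoke the Lipschitz assumption $|dp^*/dx|\le c_2$ from~\eqref{eqn:rsydensities}: on a bin of width $1/k$, $p^*$ deviates from its mean $k\theta^*_j$ by at most $c_2/k$. Combining with the lower bound $k\theta^*_j \ge c_0$ and the standard chi-square upper bound $D(p\|q)\le \int (p-q)^2/q\,\dif x$ yields $D(p^*\|p_{\theta^*}) \le c_2^2/(c_0 k^2)$ uniformly in $P^*\in\model^*$.

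For the expected estimation error $E[D(\theta^*\|\hat\theta)]$, I would expand $-E[\log\hat\theta_j]$ by the Taylor inequality $\log(1+z)\ge z - z^2$ (valid for $z\ge -\tfrac12$) applied to $Z_j := (\hat\theta_j-\mu_j)/\mu_j$, where $\mu_j := E[\hat\theta_j] = ((n{-}1)\theta^*_j+1)/(n{-}1+k)$. Since $E[Z_j]=0$, $\var(\hat\theta_j)\le \theta^*_j/(n{-}1+k)$, and $\mu_j\ge c_0/k$ via $p^*\ge c_0$, the sum $\sum_j \theta^*_j\,\var(\hat\theta_j)/\mu_j^2$ is of order $k/n$, while the residual $D(\theta^*\|\mu)$ is of order $(k/n)^2$. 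Together this gives $E[D(\theta^*\|\hat\theta)] \le Bk/n$ for a constant $B$ that depends only on $c_0$ and $c_1$.

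Combining the two bounds, $\sup_{P^*\in\model^*}\risk_n(P^*,\m{P}_k) \le c_2^2/(c_0 k^2) + Bk/n$. For $k\in[\lceil(n/\alpha)^{1/3}\rceil,\lceil n^{1/3}\rceil]$ and $\alpha\ge 1$, both $1/k^2$ and $k/n$ are bounded by $\alpha^{2/3}n^{-2/3}(1+o(1))$, so the maximum is at most $\alpha^{2/3} C n^{-2/3}(1+o(1))$ with $C = c_2^2/c_0 + B$, which is exactly the $\preceq_1 \alpha^{2/3}Cn^{-2/3}$ conclusion. The main obstacle is controlling the estimation error to the precision demanded by $\preceq_1$: the Taylor argument requires $Z_j\ge -\tfrac12$, which fails on a negligible-probability event that must be handled separately (e.g., via a crude $|\log\hat\theta_j|\le \log(n{-}1+k)$ bound weighted by the tail probability), and the lower-order $D(\theta^*\|\mu)$ and $o(k/n)$ remainders must be tracked carefully to ensure they are absorbed into the $(1+\epsilon)$ slack of $\preceq_1$ rather than inflating the leading constant.
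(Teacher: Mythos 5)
Your argument is essentially correct, but you should know that the paper does not prove Theorem~\ref{thm:rsy2} at all: it is imported from \citet{rissanen1992}, with only the remark that the proof given there for $\alpha=1$ goes through unchanged for arbitrary $\alpha>1$. So yours is a genuinely different, self-contained route. The exact split of the conditional KL divergence into $D(p^*\|p_{\theta^*})+D(\theta^*\|\hat\theta)$ is valid precisely because both $p_{\theta^*}$ and $\m{p}_k(\cdot\mid X^{n-1})$ are piecewise constant on the bins (and $\hat\theta$ does sum to one thanks to the $+1$ smoothing); the Lipschitz/chi-square bound $c_2^2/(c_0k^2)$ for the bias is correct since $p_{\theta^*}\geq c_0$ and the within-bin deviation is at most $c_2/k$; and the expansion around $\mu_j=E[\hat\theta_j]$ does give an estimation term of order $k/n$, uniformly over $\model^*$ and over $k$ in the prescribed range, because $(n-1)\theta^*_j$ is at least of order $c_0 n^{2/3}$ there, so the event $Z_j<-\tfrac12$ has exponentially small probability and on it $\log(\mu_j/\hat\theta_j)\leq\log(n-1+k)$ by the smoothing. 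Plugging $k\in[\ceil{(n/\alpha)^{1/3}},\ceil{n^{1/3}}]$ then yields the $\alpha^{2/3}n^{-2/3}$ form. What the citation buys the paper, and what your route does not quite deliver, is the precise constant: your $C=c_2^2/c_0+B$ depends on $c_0$ (and $c_1$), whereas the theorem as stated asserts dependence on $c_2$ only, and the appendix proof of Theorem~\ref{thm:switchrsy} explicitly claims to recover the exact multiplicative constant $A$ of \citet{rissanen1992}, which is built from that $C$; with your constant the $n^{1/3}$ rate in Theorem~\ref{thm:switchrsy} is unaffected, but the ``same constant $A$'' refinement would be lost. Also note that the tail-event and remainder bookkeeping you flag is genuinely required for the $\preceq_1$ form of the claim (including the fact that the restricted expectation of $Z_j$ on the good event is no longer exactly zero, which is handled by Cauchy--Schwarz against the exponentially small bad-event probability); it is routine, but it is the part of your sketch that still has to be written out.
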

In~\citep{rissanen1992} the theorem is only proved for $\alpha = 1$, but
their proof remains valid for any $\alpha > 1$. From this,
\eqref{eqn:rsy1} follows by summing \eqref{eqn:rsy2} and approximating
$\sum_{i=1}^n i^{-2/3}$ by an integral. Summation is allowed, because
$\risk_i(P^*,\m{P}_k)$ is finite for all $P^* \in \model^*$, $i$ and $k$,
and $\alpha^{2/3} C \sum_{i=1}^n i^{-2/3} \to \infty$ as $n$ goes to
infinity. The constant $A$ in \eqref{eqn:rsy1} is the product of $C$ and
the approximation error of this integral approximation. We will now
apply Theorem~\ref{thm:rsy2} to prove Theorem~\ref{thm:switchrsy} as
well.

Let $\alpha > 1$ be arbitrary and let $t_j = \ceil{\alpha^{j-1}}-1$ for
$j\in\posints$ be a sequence of switch-points. For any $n$, let $j_n$
denote the index of the last preceding switch-point, i.e.\
$n\in[t_j+1,t_{j+1}]$. Now define the oracle $\oracle_{\alpha(P^*,x^{n-1})}
:= \ceil{(t_{j_n}+1)^{1/3}}$ for any $P^* \in \model^*$ and any $x^{n-1}
\in \samplespace^{n-1}$. If we can apply
Lemma~\ref{lem:rateofconvergence} to $\oracle_\alpha$ with
$f(n)=n^{1/3}$, $g(n) = \ceil{n^{1/3}}$, $c_1 = \alpha^{2/3}A$ and $c_2
= 0$, we will obtain
\begin{equation}
  \limsup_{n \to \infty} \frac{\sum_{i=1}^n
    \risk_i(P^*,\Pswitch)}{n^{1/3}} \leq \alpha^{2/3} A
  \label{eqn:withalpha}
\end{equation}
for any $\alpha > 1$. Theorem~\ref{thm:switchrsy} then follows,
because the left-hand side of this expression does not depend on
$\alpha$. It remains to show that conditions (i)--(iii) of
Lemma~\ref{lem:rateofconvergence} are satisfied.

Condition~\ref{cond:canapplypreviouslemma} follows because $t_{j_n} + 1
\leq n$. Condition~\ref{cond:ii} is implied by the fact that
$\oracle_\alpha$ has only a logarithmic number of switch-points: It
satisfies $m_{\oracle_\alpha}(n) \leq \ceil{\log_\alpha n}+2$.
Consequently,
\begin{equation}
  m_{\oracle_\alpha}(n)(\log n + \log g(n))
    \leq (\ceil{\log_\alpha n}+2)(\log n + \ceil{n^{1/3}})
    = o(n^{1/3}).
\end{equation}

To verify Condition~\ref{cond:iii}, note that the selected number of
bins is close to $\ceil{n^{1/3}}$ in the sense of
Theorem~\ref{thm:rsy2}: For $n \in [t_j+1,t_{j+1}]$ it follows from
$(t_{j+1})/(t_j+1) \leq \alpha$ that
\begin{equation}
  \left\lceil(t_j+1)^{1/3} \right\rceil
  = \left\lceil \left(\frac{n}{n/(t_j+1)}\right)^{1/3} \right\rceil
  \in \left[\ceil{(n/\alpha)^{1/3}},\ceil{n^{1/3}}\right].
\end{equation}
We can therefore apply Theorem~\ref{thm:rsy2} to obtain
\begin{align}
  \sup_{P^* \in \model^*} \sum_{i=1}^n \risk_i(P^*,\m{P}_{\oracle_\alpha})
    \leq \sum_{i=1}^n \sup_{P^* \in \model^*} \risk_i(P^*,\m{P}_{\oracle_\alpha})
    \preceq_1 \alpha^{2/3} C \sum_{i=1}^n i^{-2/3}
    \preceq_1 \alpha^{2/3} A\, n^{1/3}.
  \label{eqn:oracleinsum}
\end{align}
This shows that Condition~\ref{cond:iii} is satisfied and
Lemma~\ref{lem:rateofconvergence} can be applied to prove the theorem.
\mbox{$\Box$}
\subsection{Proof of Proposition~\ref{prop:varfix} and Proposition~\ref{prop:yang}}
\label{sec:varfix}
We will actually prove a more general proposition that implies both
Proposition~\ref{prop:varfix} and~\ref{prop:yang}. 
Let $\Pmm$ be any prediction strategy. Now define the
prediction strategy $$ \Pcesaro(x_{n} \mid x^{n-1}) :=
\frac{1}{n} \sum_{i=1}^n \Pmm(x_n \mid x^{i-1}). $$ Thus, $\Pcesaro$
is obtained as a time (``Ces\`aro''-) average of $\Pmm$.
\begin{proposition}
\label{prop:cesaro}
Suppose that $\model^*$ is standard nonparametric, and that $\Pmm$
achieves the minimax rate in Ces\`aro mean, i.e.\  $\sup_{P^* \in
  \model^*} \sum_{i=1}^n \risk_i(P^*, \Pmm) \preceq \Gmmfix(n)$. Then
$$
\gmm(n) \leq \sup_{P^* \in \model^*} \risk_n(P^*,\Pcesaro)
\preceq n^{-1} \Gmmfix(n) \leq n^{-1} \Gmmvar(n) \preceq \gmm(n).
$$
\end{proposition}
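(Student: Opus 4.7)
The plan is to establish the four-fold inequality chain from left to right; three of the four links are essentially bookkeeping and the real work is in the second link.

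The leftmost inequality $\gmm(n) \leq \sup_{P^*} \risk_n(P^*, \Pcesaro)$ is immediate from the definition of $\gmm$ as an infimum over all estimators, since $\Pcesaro$ is a particular estimator. The third inequality $n^{-1}\Gmmfix(n) \leq n^{-1}\Gmmvar(n)$ follows because $\sup_{P^*} \sum_i \risk_i \leq \sum_i \sup_{P^*} \risk_i$ for any fixed $\bar P$, and taking infimum over $\bar P$ preserves this. The rightmost inequality $n^{-1}\Gmmvar(n) \preceq \gmm(n)$ is a direct consequence of the standard nonparametric assumption: writing $\gmm(n) \asymp n^{-\gamma} h_0(n)$ with $h_0$ nondecreasing and $h_0(n) = o(n^\gamma)$, the estimate $\Gmmvar(n) = \sum_{i=1}^n \gmm(i) \preceq h_0(n) \sum_{i=1}^n i^{-\gamma} \asymp h_0(n) n^{1-\gamma}$ follows, and dividing by $n$ gives $n^{-1}\Gmmvar(n) \preceq n^{-\gamma}h_0(n) \asymp \gmm(n)$.

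The main step is the second inequality $\sup_{P^*}\risk_n(P^*,\Pcesaro) \preceq n^{-1}\Gmmfix(n)$. Here I will use two ingredients: (a) under the i.i.d.\ assumption built into ``standard nonparametric'', $P^*(X_n = \cdot \mid x^{n-1})$ equals the marginal $P^*(X_1)$ and coincides with $P^*(X_i = \cdot \mid x^{i-1})$ as densities on $\samplespace$; and (b) KL divergence is convex in its second argument (by Jensen applied to the convex function $-\log$). Since $\Pcesaro(X_n \mid x^{n-1})$ is a uniform average of the densities $\Pmm(\cdot \mid x^{i-1})$ (which by (a) all live on the same single-outcome space), convexity gives
$$D\bigl(P^*(X_n\mid x^{n-1}) \,\big\|\, \Pcesaro(X_n\mid x^{n-1})\bigr) \leq \frac{1}{n}\sum_{i=1}^n D\bigl(P^*(X_i\mid x^{i-1}) \,\big\|\, \Pmm(X_i\mid x^{i-1})\bigr).$$
Taking expectation over $X^{n-1} \sim P^*$ and observing that the $i$-th summand on the right depends only on $X^{i-1}$, this yields $\risk_n(P^*,\Pcesaro) \leq \frac{1}{n}\sum_{i=1}^n \risk_i(P^*,\Pmm)$. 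Taking supremum over $P^* \in \model^*$ and applying the hypothesis that $\Pmm$ achieves the minimax rate in Ces\`aro mean finishes the step.

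The main obstacle I expect is justifying the convexity step cleanly: one must be careful that $\Pmm(\cdot \mid x^{i-1})$ for varying $i$ and $\Pcesaro(\cdot\mid x^{n-1})$ are being viewed as densities on the same space, which is exactly what the i.i.d.\ assumption inside ``standard nonparametric'' buys us. The remaining links in the chain are algebraic manipulations that rely only on the definition of standard nonparametric and the elementary inequality $\sup \sum \leq \sum \sup$.
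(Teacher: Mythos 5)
Your proposal is correct and takes essentially the same route as the paper: the key step $\risk_n(P^*,\Pcesaro) \leq \frac{1}{n}\sum_{i=1}^n \risk_i(P^*,\Pmm)$ is exactly the Jensen/convexity argument the paper invokes (citing Proposition~15.2 of \citep{grunwald2007}), and the final link $\Gmmvar(n)\preceq n\gmm(n)$ uses the same factoring of $h_0$ out of the sum and the integral comparison. Your explicit note about why the i.i.d.\ assumption is needed for the mixture and the target to live on a common single-outcome space is a helpful unpacking of what the paper leaves implicit.
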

\begin{proof}{\bf \ (of Proposition~\ref{prop:cesaro})}
 We show this by extending an argument from \cite[p.~1582]{YangB99}.
 By applying Jensen's inequality as in
 Proposition~15.2 of \citep{grunwald2007} (or the corresponding results
 in \citep{Yang00} or \citep{YangB99}) it now follows that, for all $P^*
 \in \model^*$, $\risk_n(P^*, \Pcesaro) \leq \frac{1}{n} \sum_{i=1}^n
 \risk_i(P^*, \Pmm)$, so that also
\begin{equation}
  \label{eq:cesaro}
  \sup_{P^*} \risk_n(P^*, \Pcesaro)
    \leq \sup_{P^*} \frac{1}{n} \sum_{i=1}^n \risk_i(P^*, \Pmm).
\end{equation}
This implies that
\begin{equation*}
n \gmm(n) \leq n \cdot \sup_{P^*} \risk_n(P^*, \Pcesaro)
    \preceq \Gmmfix(n)
    \leq \Gmmvar(n) = \sum_{i=1}^n \gmm(i).
\end{equation*}
Therefore, it suffices to show that for standard nonparametric models,
$ \sum_{i=1}^n \gmm(i) \preceq n \gmm(n)$. By (\ref{eq:hgrowth}),
$\gmm(i) \asymp i^{-\gamma} h_0(i)$ for some {\em increasing\/}
function $h_0$. Then
\begin{equation}
  \label{eq:sumtrick}
  \sum_{i=1}^n \gmm(i)
    = \sum_{i=1}^n i^{- \gamma} h_0(i) 
    \leq h_0(n) \sum_{i=1}^n i^{- \gamma}
    \overset{(a)}{\preceq} h_0(n) n^{1 - \gamma}
    = n \cdot n^{- \gamma} h_0(n)
    \asymp n \gmm(n).
\end{equation}
where (a) follows by approximating the sum by an integral. The result follows. 
\end{proof}
\subsection{Proof of Lemma~\ref{lem:linregb}}
\begin{proof}
 Let $P^* \in \model^*$ be arbitrary. We may transform $\phi_1$ to
 $\psi_1$, $\phi_2$ to $\psi_2$ and so on, such that for each $k$,
 $(\psi_1, \ldots, \psi_k)$ is an orthonormal basis for $S_k$ with
 respect to $P^*$. For any $k$, each $P \in \model_k$ may now be
 parameterized by $\eta = (\eta_{(1)}, \ldots, \eta_{(k)}) \in \reals^k$,
 which means that $P_\eta \equiv P$ expresses $Y_i = \sum_{j=1}^k \eta_{(j)}
 \psi_j(X_i) + U_i$. Now let $k \in \posints$ be arbitrary and define
 $\tilde{\eta}$ such that $\tilde{P}_k = P_{\tilde{\eta}}$. Let $\psi
 := ( \psi_1, \ldots, \psi_k)^\transpose$.  Using the fact that the
 errors are normally distributed, for any $\eta \in \reals^k$,
 abbreviating $\psi(X)$ to $\psi$, we have
\begin{eqnarray}
\lefteqn{D(P^* \| P_{\eta}) - D(P^* \| P_{\tilde{\eta}}) 
=} & & \nonumber \\
& = & \frac{1}{2 \sigma^2} E E \left[ 
(Y - \eta^\transpose \psi)^2 - (Y - \tilde{\eta}^\transpose \psi)^2
 \mid X\right] \nonumber \\
& = & \frac{1}{2 \sigma^2} E \left[ 
- 2 E[Y|X] (\eta - \tilde{\eta})^\transpose
\psi + (\eta^\transpose \psi)^2 
- (\tilde{\eta}^\transpose \psi)^2 \right] \nonumber \\
& = & \frac{1}{2 \sigma^2} E \left[ 
% Hier beginnen de notatieproblemen vanwege gedoe met subscripts k.
- 2 \left(\sum_{j=1}^k \tilde{\eta}_{(j)} \psi 
+ \sum_{j={k+1}}^\infty \tilde{\eta}_{(j)} \psi
\right) 
\left(\sum_{j=1}^k (\eta_{(j)} - \tilde{\eta}_{(j}) \psi
\right) + (\eta^\transpose \psi)^2 
- (\tilde{\eta}^\transpose \psi)^2 \right] \nonumber \\
& = & \frac{1}{2 \sigma^2} E \left[ 
- 2 \sum_{j=1}^k \tilde{\eta}_{(j)} \psi \left(\sum_{j=1}^k (\eta_j - \tilde{\eta}_{(j)}) \psi
\right)
-2 B + (\eta^\transpose \psi)^2 
- (\tilde{\eta}^\transpose \psi)^2 \right] \nonumber \\
% NB Heb hier een stap tussengezet om je te herinneren aan wat ik niet
% begreep: je gebruikt hier dat $\psi_i$ en $\psi_j$ orthagonaal zijn
% voor $i \neq j$. Zonder deze extra stap had leek het alsof die ene som
% gewoon op zich gelijk aan 0 was.
& = & 
\frac{1}{2 \sigma^2} E \left[ 
- 2 \left(\sum_{j=1}^k \tilde{\eta}_{(j)} \psi \right) 
\left(\sum_{j=1}^k (\eta_j - \tilde{\eta}_{(j)}) \psi 
\right) + (\eta^\transpose \psi)^2 
- (\tilde{\eta}^\transpose \psi)^2 \right] \nonumber \\
& = & \frac{1}{2 \sigma^2} E\left[
(\tilde{\eta}^\transpose \psi)^2 
-2 \left(\tilde{\eta}^\transpose  \psi\right)
\left(\eta^\transpose
\psi \right)
+ (\eta^\transpose \psi)^2 \right] \nonumber \\
& = &
\frac{1}{2 \sigma^2}
E\left[ (\tilde{\eta}^\transpose \Psi - \eta^\transpose \Psi)^2
\right] = 
\frac{1}{2 \sigma^2}
E \left[(\tilde{\eta} - \eta)^\transpose \psi
\psi^\transpose (\tilde{\eta} - \eta) \right] \notag\\
&= &
\frac{1}{2 \sigma^2}
(\tilde{\eta} - \eta)^\transpose (\tilde{\eta} - \eta).\label{eq:lenzen}
\end{eqnarray}
Here the outer expectation on each line is expectation according to $P^*_X$,
the marginal distribution of $X$ under $P^*$. In the fourth equality, 
$B = \left(\sum_{j={k+1}}^\infty \tilde{\eta}_{(j)} \psi\right)
\left(\sum_{j=1}^k (\eta_{(j)} - \tilde{\eta}_{(j)}) \psi
\right)$, which,  by orthogonality of the $\psi_j$, is equal to $0$. The final
equality also follows by orthogonality. 

Now fix $n> k$, and let $\hat{\eta}_{n}$ denote the maximum likelihood
parameter value in the $\eta$-parameterization based on data
$X^{n-1}$, i.e.\ $P_{\hat{\eta}_{n}} := \m{P}_k(Y_n = \cdot \mid X^{n}, Y^{n-1})$
(note that $\m{P}_k(Y_n = \cdot \mid X^{n}, Y^{n-1})$ itself does not depend on
the choice of basis).  Using (\ref{eq:lenzen}), we can rewrite
(\ref{eq:estimerror}) as follows:
\begin{equation}
\label{eq:piep} 
E \left
[(\tilde{\eta}_{n-1} - \hat{\eta}_{n-1})^\transpose (\tilde{\eta}_{n-1} -
\hat{\eta}_{n-1})
\right] 
\geq 
E \left
[(\tilde{\eta}_{n} - \hat{\eta}_{n})^\transpose (\tilde{\eta}_{n} -
\hat{\eta}_{n})
\right], 
\end{equation}
where now the expectation is over $X^{n-1}, Y^{n-1}$, sampled i.i.d.
from $P^*$.  It thus remains to show that (\ref{eq:piep}) holds.

Write $\Psi^{(n)}$ for the $n \times k$ design matrix with
$(j,i)$-th entry given by $\psi_j(x_i)$.  We show
further below that, if $x_1, \ldots, x_{n-1}$ are such that $(\Psi^{(n-1)})^\transpose
\Psi^{(n-1)}$ is nonsingular, then the variance of $\hat{\eta}_{n-1}$ is at least as large as the variance of $\hat{\eta}_{n}$, i.e.:
\begin{equation}
\label{eq:siep}
E   [(\tilde{\eta} - \hat{\eta}_{n-1})^\transpose 
(\tilde{\eta} - \hat{\eta}_{n-1}) \mid X^{n} = x^{n}]
\geq 
E [ (\tilde{\eta} - \hat{\eta}_{n})^\transpose 
(\tilde{\eta} - \hat{\eta}_{n}) \mid X^{n} = x^{n}].
\end{equation}
Since, by our assumptions. for all $k$, all $n$, 
$$E[ (\tilde{\eta} - \hat{\eta}_{n})^\transpose 
(\tilde{\eta} - \hat{\eta}_{n}) \mid
\text{$(\Psi^{(n)})^\transpose \Psi^{(n)}$ is singular}  ] < \infty,
$$
where, also by assumption, the event that $(\Psi^{n)})^\transpose \Psi^{(n)}$ is singular
has $P^*$-measure 0, it follows that (\ref{eq:piep}) is
implied by (\ref{eq:siep}). Thus, it remains to prove (\ref{eq:siep}).  We prove (\ref{eq:siep}) by slightly adjusting an existing geometric 
proof of the related (but non-equivalent) Gauss-Markov theorem \citep{Ruud95}.
Define, for given $x^n$, 
$$
P  =  \Psi^{(n)} \left( \left( \Psi^{(n)}
  \right)^\transpose \Psi^{(n)} \right)^{-1} \left(\Psi^{(n)}
\right)^\transpose \ \ ; \ \ 
Q = \Psi^{(n)} \left( \left( \Psi^{(n-1)}
  \right)^\transpose \Psi^{(n-1)} \right)^{-1} \left(\Psi^{(n-1)} \right)^\transpose J,
$$ 
where $J$ is the $(n-1) \times n$ matrix with $J_{1,1} = \ldots =
  J_{n-1,n-1} = 1$, and all other entries equal to $0$. Letting ${\bf
    y} = (y_1, \ldots, y_n)^\transpose$, we see that $P$ is a
  projection matrix, and
\begin{equation}
\label{eq:bieb}
P {\bf y} = \Psi^{(n)} \hat{\eta}_{n}
\ \ ; \ \ 
Q {\bf y} = 
\Psi^{(n)} \hat{\eta}_{n-1}. 
\end{equation}
Now, for arbitrary $a \in \reals^n$, we have
\begin{equation}
\label{eq:geintje}
\var(a^\transpose Q {\bf y} \mid x^n) =
\var(a^\transpose P {\bf y} \mid x^n) + \var(a^\transpose (Q-P) {\bf y} \mid x^n)
+ 2 \cov (a^\transpose(Q-P) {\bf y}, a^\transpose P {\bf y} \mid x^n).
\end{equation}
A straightforward (but tedious) calculation shows that 
$$
\cov (a^\transpose(Q-P) {\bf y}, a^\transpose P {\bf y} \mid x^n)
= \sigma^2 a^\transpose (QP^\transpose - P P^\transpose) a.
$$ 
As $P$ is symmetric, $P^\transpose = P$, and for all ${\bf y} \in \reals^n$,
$\bar{\bf y} := P {\bf y}$ is in the column space of $\Psi^{(n)}$,
so that $P \bar{\bf y} = \bar{\bf y}$, and $P P^\transpose {\bf y} =
\bar{\bf y}$. But since $Q \Psi^{(n)} = \Psi^{(n)}$, and $\bar{\bf y}$ is in the column space of $\Psi^{(n)}$, we must also have $Q
\bar{\bf y} = \bar{\bf y}$ and $QP^\transpose {\bf y} = \bar{\bf
  y}$. Thus, for arbitrary ${\bf y}$, $QP^\transpose {\bf y} = P
P^\transpose {\bf y}$, and it follows that the $\cov$-term in
(\ref{eq:geintje}) is equal to $0$. Thus, (\ref{eq:geintje}) implies
that
\begin{equation}
\label{eq:heintje}
\var(a^\transpose Q {\bf y} \mid x^n) \geq
\var(a^\transpose P {\bf y} \mid x^n)
\end{equation}
Now apply this with $$a := (1,1, \ldots, 1)^\transpose \cdot 
\left( \left( \Psi^{(n)}
  \right)^\transpose \Psi^{(n)} \right)^{-1} \left(\Psi^{(n)}
\right)^\transpose
,$$ 
where the leftmost vector is a  $k$-dimensional vector of 1s.  
By (\ref{eq:bieb}), (\ref{eq:heintje}) now becomes equivalent to 
$\var \sum_{j=1}^{k}
\hat{\eta}_{n-1,j} \geq \var \sum_{j=1}^{k} \hat{\eta}_{n,j} $, which is just (\ref{eq:siep}).
\end{proof}
\subsection{Proof of Theorem~\ref{thm:algo}}\label{sec:proofalgo}
\newcommand{\expert}{\xi} % (E_n was already in use.)

Before we prove Theorem~\ref{thm:algo}, we need to establish some additional
properties of the prior $\pi$ as defined in~\eqref{eq:prior}. 
To this end, let us define the random variables
\begin{eqnarray}
S_n(\switchpar)& := &\ind_{(n-1) \in \{t_1,\ldots,t_m\}}(\switchpar),\\
M_n(\switchpar)& := &\ind_{n > t_m}(\switchpar)
\end{eqnarray}
for all $n\in\posints$ and $\switchpar=((t_1,k_1)$, $\ldots$,
$(t_m,k_m))\in\switchpars$. These functions denote, respectively,
whether or not a switch occurs between outcome $X_{n-1}$ and outcome
$X_n$, and whether or not the last switch occurs somewhere before
outcome $n$. We also define $\expert_n(\switchpar):=(S_n(\switchpar)$,
$M_n(\switchpar)$, $K_n(\switchpar))$ as a convenient abbreviation.

Every parameter value $\switchpar\in\switchpars$ determines an infinite
sequence of values $\expert_1$, $\expert_2$, $\ldots$, and vice versa.
The advantage of these new variables is that they allow us to interpret
the prior as a sequential strategy for prediction of the value of the
next random variable $\expert_{n+1}$ (which in turn determines the
distribution on $X_{n+1}$ given $x^n$), given all previous random
variables $\expert^n := (\expert_1, \ldots, \expert_n)$. In fact, we
will show that $\pswitch(\expert_{n+1} \mid X^n,\expert^n) =
\pi(\expert_{n+1} \mid \expert^n)$. We therefore first calculate the
conditional probability $\pi(\expert_{n+1}|\expert^n)$ before proceeding
to prove the theorem. As it turns out, our prior has the nice property
that $\pi(\expert_{n+1}\mid \expert^n) = \pi(\expert_{n+1} \mid M_n,
K_n)$, which is the reason for the efficiency of the algorithm.

\begin{lemma}\label{lemma:preqprior}
  Let $\pi(\switchpar)=\theta^{m-1}(1-\theta)\pik(k_1)\prod_{i=2}^m
  \pit(t_i|t_i>t_{i-1})\pik(k)$ as in~\eqref{eq:prior}. Then
  \begin{equation}
    \pi(\expert_1) =
      \begin{cases}
        \pik(K_1)\theta &\textnormal{if $M_1=0$},\\
        \pik(K_1)(1-\theta) &\textnormal{if $M_1=1$}.
      \end{cases}
      \label{eq:priore1}
  \end{equation}
   And for $n \geq 1$

  \begin{equation}
    \pi(\expert_{n+1}\mid\expert^n)
      = \pi(\expert_{n+1}\mid M_n,K_n), \text{and}
  \end{equation}
  \begin{align}
    \pi(\expert_{n+1}&=(s_{n+1},m_{n+1},k_{n+1})\mid M_n=m_n,K_n=k_n)\\
    &=
      \begin{cases}%
        \pit(T>n|T\ge n)&\textnormal{if $s_{n+1}=0$, $m_{n+1}=m_n=0$,
        $k_{n+1} = k_n$,}\\
        1&\textnormal{if $s_{n+1}=0$, $m_{n+1}=m_n=1$, $k_{n+1}=k_n$,}\\
        \pit(T=n|T\ge n)\pik(k_{n+1})\theta&\textnormal{if $s_{n+1}=1$, $m_{n+1}=m_n=0$,}\\
        \pit(T=n|T\ge n)\pik(k_{n+1})(1-\theta)&\textnormal{if $s_{n+1}=1$, $m_{n+1}=1$, $m_n=0$,}\\
        0 &\text{otherwise.}
      \end{cases}
      \label{eq:condprior}
  \end{align}
\end{lemma}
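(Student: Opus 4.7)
My plan is to prove both claims by direct computation from the factored definition of $\pi$. The base case (\ref{eq:priore1}) is immediate: since $t_1 = 0$ always, $S_1 \equiv 1$, and the event $\{\expert_1 = (1,1,k)\}$ coincides exactly with $\{m=1, k_1 = k\}$, which has probability $\pim(1)\pik(k) = (1-\theta)\pik(k)$, while $\{\expert_1 = (1,0,k)\} = \{m \geq 2, k_1 = k\}$ has probability $\pik(k)\sum_{m\geq 2}\pim(m) = \theta\,\pik(k)$ by the geometric identity.

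For the conditional, I will write $\pi(\expert_{n+1}\mid\expert^n) = \pi(\expert^{n+1})/\pi(\expert^n)$ and compute each marginal by summing $\pi(\switchpar)$ over all $\switchpar$ compatible with the given history. The key structural observation is that $\expert^n$ determines exactly the prefix $(t_1,k_1),\dots,(t_j,k_j)$ of switches completed by time $n$ (where $j$ is the number of $i\leq n$ with $S_i = 1$), while $M_n$ records whether $m = j$ or $m > j$. When $M_n = 1$ the parameter $\switchpar$ is pinned down, so $\pi(\expert^n) = \pim(j)\cdot A$ with $A := \pik(k_1)\prod_{i=2}^j \pit(t_i\mid t_i>t_{i-1})\pik(k_i)$, and the only compatible $\expert_{n+1}$ is $(0,1,k_j)$, yielding ratio $1$ and verifying case~2. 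When $M_n = 0$, the marginalization uses three facts: each $\pik$-factor sums to $1$; summing $\pit(t_i\mid t_i > t_{i-1})$ for $i > j+1$ over all remaining switch-times telescopes to $1$ for each fixed $t_{j+1}$; and $\sum_{m' \geq j+1}\pim(m') = \theta^j$ by the geometric form. Together these give
\[
\pi(\expert^n) \;=\; A\cdot\theta^j\cdot\pit(T \geq n\mid T > t_j).
\]

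The same method computes $\pi(\expert^{n+1})$ in each remaining sub-case: for $\expert_{n+1}=(0,0,k_j)$ the only change is $t_{j+1}\geq n+1$, contributing a factor $\pit(T\geq n+1\mid T > t_j)$; for $\expert_{n+1}=(1,m_{n+1},k_{n+1})$ with $t_{j+1}=n$, the relevant factor becomes $\pit(T=n\mid T > t_j)\cdot\pik(k_{n+1})$, multiplied by either $\pim(j+1)$ or $\sum_{m'\geq j+2}\pim(m')=\theta^{j+1}$ according as $M_{n+1}=1$ or $M_{n+1}=0$. Taking ratios, the prefix $A$ cancels, and since $\pit(T\geq n\mid T > t_j) = \pit(T\geq n)/\pit(T > t_j)$ and $\pit(T=n\mid T > t_j) = \pit(T=n)/\pit(T > t_j)$, the $\pit(T > t_j)^{-1}$ factors cancel as well. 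What remains are exactly $\pit(T>n\mid T\geq n)$ in case~1, and $\pit(T=n\mid T\geq n)\,\pik(k_{n+1})\,\theta$ in case~3 and $\pit(T=n\mid T\geq n)\,\pik(k_{n+1})\,(1-\theta)$ in case~4, using the identities $\theta^{j+1}/\theta^j = \theta$ and $\pim(j+1)/\theta^j = 1-\theta$.

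The main obstacle is the bookkeeping in the case analysis, combined with the observation that the apparent dependence on $t_j$ and on $k_1,\dots,k_{j-1}$ disappears: the $k_i$-dependence vanishes because the prefix factor $A$ cancels in the ratio, and the $t_j$-dependence vanishes because $\pit(\cdot\mid T > t_{i-1})$ depends on the past only through $t_{i-1}$, so the same $\pit(T > t_j)^{-1}$ factor appears in both numerator and denominator. It is likewise essential that $\pim$ is geometric, for only then is $\pim(m=j+1\mid m \geq j+1) = 1-\theta$ independent of $j$; without this memorylessness the conditional $\pi(\expert_{n+1}\mid\expert^n)$ would not reduce to a function of $(M_n, K_n)$ alone, and the ``sufficient statistic'' interpretation powering Algorithm~\ref{algo:switch} would fail.
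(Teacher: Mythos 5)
Your proof is correct and takes essentially the same route as the paper's: both write $\pi(\expert_{n+1}\mid\expert^n)$ as a ratio of marginals obtained by summing $\pi(\switchpar)$ over all $\switchpar$ compatible with the observed history, using the facts that the unused $\pik$-factors sum to one, the tail of $\pit$-factors telescopes, and $\sum_{m'>j}\pim(m')=\theta^{j}$, and then observing that the prefix factor and the $\pit(T>t_j)^{-1}$ term cancel in the ratio. Your explicit intermediate identity $\pi(\expert^n)=A\cdot\theta^{j}\cdot\pit(T\ge n\mid T>t_j)$ and the closing remark on why geometricity of $\pim$ is the load-bearing assumption are merely a more spelled-out rendering of the same computation.
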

\begin{proof}
  To check \eqref{eq:priore1}, note that we must have either
  $\expert_1=(1,1,k)$ for some $k \in \posints$, which corresponds to
  $\switchpar=((0,k))$ which has probability $\pik(k)(1-\theta)$ as
  required, or $\expert_1=(1,0,k)$. The latter corresponds to the event
  that $m>1$ and $K_1=k$, which has probability $\pik(k)\theta$.

  We proceed to calculate the conditional probability
  $\pi(\expert_{n+1}\mid \expert^n)$ for $n \geq 1$. First suppose
  $M_n(\switchpar)=0$. Let $A_n(\switchpar):=\max\{i\mid
  t_i<n\}=\sum_{i=1}^n S_i$ count the number of switches before $n$.
  Also note that $\expert^n$ and $M_n=0$ determine $t_1$, $\ldots$,
  $t_{A_n}$, $k_1$, $\ldots$, $k_{A_n}$, that $t_{A_n} \geq n$ and
  $m(\switchpar) > A_n$, and vice versa. Hence for any $n$
  \begin{multline}
    \pi(\expert^n \text{ such that } M_n=0) = \\
%    &\sum_{m=A_n+1}^\infty \pim(m)
%     \sum_{\substack{n \leq t_{A_n+1} < \ldots < t_m\\k_{A_n+1},\ldots,k_m}}
%     \pi(t_1,\ldots,t_m,k_1,\ldots,k_m \mid t_1< \ldots < t_m)\notag\\
%    &=&
%     \pi(t_1,\ldots,t_{A_n},n \leq t_{A_n+1},k_1,\ldots,k_{A_n} \mid t_1< \ldots < t_{A_n + 1})
%     \sum_{m=A_n+1}^\infty \pim(m) \notag\\
%     &&\times \sum_{\substack{n \leq t_{A_n+1} < \ldots < t_m\\k_{A_n+1},\ldots,k_m}}
%     \pi(t_{A_n+1},\ldots,t_m,k_{A_n+1},\ldots,k_m \mid n \leq t_{A_n+1}< \ldots < t_m, m)\notag\\
%    &=&
     \pim(m > A_n)\, \pi(t_1,\ldots,t_{A_n},n \leq t_{A_n+1},k_1,\ldots,k_{A_n} \mid t_1< \ldots < t_{A_n + 1}, m > A_n).
    \label{eq:mneq0}
  \end{multline}
  Likewise, for $M_n = 1$
  \begin{equation}
  \pi(\expert^n \text{ such that } M_n = 1) =
     \pim(m = A_n)\, \pi(t_1,\ldots,t_{A_n},k_1,\ldots,k_{A_n} \mid t_1< \ldots < t_{A_n}, m = A_n).
     \label{eq:mneq1}
  \end{equation}
  From \eqref{eq:mneq0} and~\eqref{eq:mneq1} we can compute the
  conditional probability $\pi(\expert_{n+1}\mid \expert^n)$. We
  distinguish further on the basis of the possible values of $S_{n+1}$
  and $M_{n+1}$. Note that $M_{n+1}=0$ implies $M_n=0$ and $M_{n+1}=1$
  implies $M_n=1-S_{n+1}$. Also note that $S_{n+1}=0$ implies
  $A_{n+1}=A_n$ and $K_{n} = K_{n+1}$, and that $S_{n+1}=1$ implies
  $A_{n+1}=A_n+1$ and $t_{A_n+1}=n$. Conveniently, most factors cancel
  out, and we obtain
  \begin{eqnarray}
    \pi(S_{n+1} = 0, M_{n+1} = 0, K_{n+1} = k 
            \mid \expert^n \text{ s.t. } M_n = 0, K_n = k)
      &=& \pi(t_{A_n + 1} \geq n+1 \mid t_{A_n + 1} \geq n) \notag\\
      &=& \pit(T > n \mid T \geq n),\\
    \pi(S_{n+1} = 0, M_{n+1} = 1, K_{n+1} = k 
            \mid \expert^n \text{ s.t. } M_n = 1, K_n = k)
      &=& 1,
  \end{eqnarray}
  \begin{align}
    \pi(S_{n+1} = 1, M_{n+1} = 0, K_{n+1} = k 
            &\mid \expert^n \text{ s.t. } M_n = 0) \notag\\
      &= \pim(m > A_n + 1 \mid m > A_n)
          \pi(t_{A_n +1} = n \mid t_{A_n +1} \geq n)
          \pik(k) \notag\\
      &= \theta \, \pit(T = n \mid T \geq n) \pik(k),\\
    \pi(S_{n+1} = 1, M_{n+1} = 1, K_{n+1} = k 
            &\mid \expert^n \text{ s.t. } M_n = 0) \notag\\
      &= \pim(m = A_n + 1 \mid m > A_n)
          \pi(t_{A_n +1} = n \mid t_{A_n +1} \geq n)
          \pik(k) \notag\\
      &= (1-\theta) \, \pit(T = n \mid T \geq n) \pik(k).
  \end{align}
  The observation that these conditional probabilities depend only on
  $M_n$ and $K_n$ shows that $\pi(\expert_{n+1} \mid \expert^n) =
  \pi(\expert_{n+1} \mid M_n,K_n)$, which completes the proof of the
  lemma.
\end{proof}
\begin{proof}[Proof of Theorem~\ref{thm:algo}]
  Note that $\expert^n(\switchpar)$ completely determines
  $q_\switchpar(X^n)$. Therefore let $q_{\expert^n}(X^n) \equiv
  q_{\expert^n(\switchpar)}(X^n)$ $\equiv$ $q_{\switchpar}(X^n)$. It follows
  that
  \begin{eqnarray}
    \pswitch(\expert^{n+1}=e^{n+1},X^n)
      &=& \sum_{\switchpar : \expert^{n+1}=e^{n+1}} \pi(\switchpar) q_\switchpar(X^n) \\
      &=& q_{\expert^n}(X^n) \sum_{\switchpar : \expert^{n+1}(\switchpar)=e^{n+1}} \pi(\switchpar) \\
      &=& q_{\expert^n}(X^n) \pi(\expert^n = e^n) \pi(\expert_{n+1} = e_{n+1} \mid \expert^n = e^n) \\
      &=& \pswitch(\expert^n = e^n,X^n) \pi(\expert_{n+1} = e_{n+1} \mid \expert^n = e^n),
  \end{eqnarray}
  which together with Lemma~\ref{lemma:preqprior} implies that
  \begin{equation}
    \pswitch(\expert_{n+1} \mid \expert^n,X^n)
      = \pi(\expert_{n+1} \mid \expert^n)
      = \pi(\expert_{n+1} \mid M_n,K_n).
      \label{eq:indepdata}
  \end{equation}

  We will now go through the algorithm step by step to show that the
  invariants $w^a_k=P(x^{n-1},M_n=0,K_n=k)$ and
  $w^b_k=P(x^{n-1},M_n=1,K_n=k)$ hold for all $k \in \allowedk_n$ at the
  start of each iteration through the loop (before
  line~\ref{line:posterior}). These invariants ensure that
  $w^a_k+w^b_k=P(x^{n-1},K_n=k)$ so that the correct probabilities are
  reported.

  Line~\ref{line:init} initializes $w^a_k$ to $\pik(k)\theta =
  \pi(S_1=1,M_1=0,K_1=k) = \pswitch(x^0,M_1=0,K_1=k)$ for
  $k\in\allowedk_1$. Likewise $w^b_k = \pi(k)(1-\theta) =
  \pi(S_1=1,M_1=1,K_1=k) = \pswitch(x^0,M_1=1,K_1=k)$. Thus the loop
  invariant holds at the start of the first iteration.

  We proceed to show that the invariant holds in subsequent iterations
  as well. In the loss update in line~\ref{line:loss_update} we update
  the weights for $k\in\allowedk_n$ to
  \begin{eqnarray*}
    w^a_k & = &\pswitch(x^{n-1},M_n=0,K_n=k)\cdot p_k(x_n\mid x^{n-1})\\
    & =
    &\kern-1.5em\sum_{\switchpar:M_n=0,K_n=k}\kern-1.5em\pi(\switchpar)\left(\prod_{i=1}^{n-1}p_{K_i}(x_i\mid x^{i-1})\right)p_{K_n}(x_n\mid x^{n-1})\quad=\quad
    \pswitch(x^n,M_n=0,K_n=k).
\end{eqnarray*}
Similarly $w^b_k=\pswitch(x^n,M_n=1,K_n=k)$. Then in line~\ref{line:pool},
we compute $\texttt{pool}=\pit(Z=n\mid Z\ge
n)\sum_{k\in\allowedk_n}\pswitch(x^n,M_n=0,K_n=k)=\pit(Z=n\mid Z\ge n)\pswitch(x^n,M_n=0)$.
Finally, we consider the loop that starts at
line~\ref{line:share_update_start} and ends at
line~\ref{line:share_update_end}. First note that for $k \in
\allowedk_n$ by applying Lemma~\ref{lemma:preqprior} and
\eqref{eq:indepdata} we obtain
\begin{eqnarray}
\lefteqn{w^a_k\pit(Z>n\mid Z\ge n)
    =} & & \nonumber \\
&=& \pswitch(x^n,M_n=0,K_n=k)\pit(Z>n\mid Z\ge n)\nonumber\\
    &=&\pswitch(x^n,M_n=0,K_n=k)\pswitch(S_{n+1}=0,M_{n+1}=0,K_{n+1}=k\mid x^n,M_n=0,K_n=k) \nonumber\\
    &=&\pswitch(x^n,M_n=0,K_{n+1}=K_n=k,S_{n+1}=0,M_{n+1}=0) \nonumber\\
    &=&\pswitch(x^n,S_{n+1}=0,M_{n+1}=0,K_{n+1}=k).
  \label{eq:loop1}
\end{eqnarray}
Similarly we get for $k \in \allowedk_n$ that
\begin{equation}
  \label{eq:loop2}
    w^b_k = \pswitch(x^n,M_n=1,K_n=k)
%    &=\pswitch(x^n,M_n=1,K_n=k)
%      \pswitch(S_{n+1}=0,M_{n+1}=1,K_{n+1}=k \mid x^n,M_n=1,K_n=k)
%      \notag\\
%    &=\pswitch(x^n,S_{n+1}=0,M_{n+1}=M_n=1,K_{n+1}=K_n=k) \notag\\
    = \pswitch(x^n,S_{n+1}=0,M_{n+1}=1,K_{n+1}=k).
\end{equation}
As $S_{n+1} = 0$ implies $K_{n+1} = K_n$, we have for $k \in
\allowedk_{n+1}\setminus \allowedk_n$ that
\begin{eqnarray}
  \label{eq:loop3}
  \pswitch(x^n,S_{n+1}=0,M_{n+1}=0,K_{n+1}=k) &=& 0,\\
  \pswitch(x^n,S_{n+1}=0,M_{n+1}=0,K_{n+1}=k) &=& 0.
\end{eqnarray}
By Lemma~\ref{lemma:preqprior} and \eqref{eq:indepdata} we also get that
\begin{align}
  \texttt{pool}\pik(k)\theta
  &=\pit(Z=n\mid Z\ge n)\pswitch(x^n,M_n=0)\pik(k)\theta \notag\\
  &=\pswitch(x^n,M_n=0)\pswitch(S_{n+1}=1,M_{n+1}=0,K_{n+1}=k\mid x^n,M_n=0) \notag\\
  &=\pswitch(x^n,M_n=0,S_{n+1}=1,M_{n+1}=0,K_{n+1}=k) \notag\\
  &=\pswitch(x^n,S_{n+1}=1,M_{n+1}=0,K_{n+1}=k),
  \label{eq:loop4}
\end{align}
and similarly
\begin{multline}
  \label{eq:loop5}
  \texttt{pool}\pik(k)(1-\theta) =
  \pit(Z=n\mid Z\ge n)\pswitch(x^n,M_n=0)\pik(k)(1-\theta) \\
%  &=&\pswitch(x^n,M_n=0)\pswitch(S_{n+1}=1,M_{n+1}=1,K_{n+1}=k\mid x^n,M_n=0) \notag\\
%  &=&\pswitch(x^n,M_n=0,S_{n+1}=1,M_{n+1}=1,K_{n+1}=k) \notag\\
  =\pswitch(x^n,S_{n+1}=1,M_{n+1}=1,K_{n+1}=k).
\end{multline}
Together,
\eqref{eq:loop1},\eqref{eq:loop2},\eqref{eq:loop3},\eqref{eq:loop4}, and
\eqref{eq:loop5} imply that at the end of the loop
\begin{eqnarray*}
  w^a_k &=& \pswitch(x^n,S_{n+1}=0,M_{n+1}=0,K_{n+1}=k)
              + \pswitch(x^n,S_{n+1}=1,M_{n+1}=0,K_{n+1}=k)\\
        &=& \pswitch(x^n,M_{n+1}=0,K_{n+1}=k),\\
  w^b_k &=& \pswitch(x^n,S_{n+1}=0,M_{n+1}=1,K_{n+1}=k)
              +\pswitch(x^n,S_{n+1}=1,M_{n+1}=1,K_{n+1}=k) \\
        &=& \pswitch(x^n,M_{n+1}=1,K_{n+1}=k),
\end{eqnarray*}
which shows that the loop invariants hold at the start of the next
iteration and that after the last iteration the final posterior is also
correctly reported based on these weights.
\end{proof}
\bibliography{switch} \pagebreak

\begin{thebibliography}{50}
\providecommand{\natexlab}[1]{#1}
\providecommand{\url}[1]{\texttt{#1}}
\expandafter\ifx\csname urlstyle\endcsname\relax
  \providecommand{\doi}[1]{doi: #1}\else
  \providecommand{\doi}{doi: \begingroup \urlstyle{rm}\Url}\fi

\bibitem[Akaike(1974)]{akaike1974}
H.~Akaike.
\newblock A new look at statistical model identification.
\newblock \emph{IEEE Transactions on Automatic Control}, 19\penalty0
  (6):\penalty0 716--723, 1974.

\bibitem[Barron(1985)]{Barron85}
A.~Barron.
\newblock \emph{Logically Smooth Density Estimation}.
\newblock PhD thesis, Department of Electrical Engineering, Stanford
  University, Stanford, CA, 1985.

\bibitem[Barron and Sheu(1991)]{BarronS91}
A.~Barron and C.~Sheu.
\newblock Approximation of density functions by sequences of exponential
  families.
\newblock \emph{The Annals of Statistics}, 19\penalty0 (3):\penalty0
  1347--1369, 1991.

\bibitem[Barron et~al.(1994)Barron, Yang, and Yu]{BarronYY94}
A.~Barron, Y.~Yang, and B.~Yu.
\newblock Asymptotically optimal function estimation by minimum complexity
  criteria.
\newblock In \emph{Proceedings of the 1994 International Symposium on
  Information Theory}, page~38, Trondheim, Norway, 1994.

\bibitem[Barron et~al.(1998)Barron, Rissanen, and Yu]{barron1998b}
A.~Barron, J.~Rissanen, and B.~Yu.
\newblock The minimum description length principle in coding and modeling.
\newblock \emph{IEEE Transactions on Information Theory}, 44\penalty0
  (6):\penalty0 2743--2760, 1998.

\bibitem[Barron(1998)]{barron1998a}
A.~R. Barron.
\newblock Information-theoretic characterization of {B}ayes performance and the
  choice of priors in parametric and nonparametric problems.
\newblock In \emph{Bayesian Statistics 6}, pages 27--52. Oxford University
  Press, 1998.

\bibitem[Clarke(1997)]{Clarke97}
B.~Clarke.
\newblock Online forecasting proposal.
\newblock Technical report, University of Dortmund, 1997.
\newblock Sonderforschungsbereich 475.

\bibitem[Clarke and Barron(1990)]{ClarkeB90}
B.~Clarke and A.~Barron.
\newblock Information-theoretic asymptotics of {B}ayes methods.
\newblock \emph{IEEE Transactions on Information Theory}, IT-36\penalty0
  (3):\penalty0 453--471, 1990.

\bibitem[Cover and Thomas(1991)]{cover1991}
T.~M. Cover and J.~A. Thomas.
\newblock \emph{Elements of Information Theory}.
\newblock John Wiley \& Sons, 1991.

\bibitem[Cox(1988)]{Cox88}
D.~Cox.
\newblock Approximation of least squares regression on nested subspaces.
\newblock \emph{Annals of Statistics}, 16\penalty0 (2):\penalty0 713--732,
  1988.

\bibitem[Dawid(1992)]{Dawid92b}
A.~Dawid.
\newblock Prequential data analysis.
\newblock In M.~Gosh and P.~Pathak, editors, \emph{Current Issues in
  Statistical Inference: Essays in Honor of D. Basu}, volume~17 of \emph{IMS
  Lecture Notes}, pages 113--125, 1992.

\bibitem[Dawid(1984)]{dawid1984}
A.~P. Dawid.
\newblock Statistical theory: The prequential approach.
\newblock \emph{Journal of the Royal Statistical Society A}, 147, Part
  2:\penalty0 278--292, 1984.

\bibitem[{D}e Luna and Skouras(2003)]{DeLunaS03}
X.~{D}e Luna and K.~Skouras.
\newblock Choosing a model selection strategy.
\newblock \emph{Scandinavian Journal of Statistics}, 30:\penalty0 113--128,
  2003.

\bibitem[Diaconis and Freedman(1986)]{DiaconisF86}
P.~Diaconis and D.~Freedman.
\newblock On the consistency of {B}ayes estimates.
\newblock \emph{The Annals of Statistics}, 14\penalty0 (1):\penalty0 1--26,
  1986.

\bibitem[Forster(2001)]{Forster01}
M.~Forster.
\newblock The new science of simplicity.
\newblock In A.~Zellner, H.~Keuzenkamp, and M.~McAleer, editors,
  \emph{Simplicity, Inference and Modelling}, pages 83--117. Cambridge
  University Press, Cambridge, 2001.

\bibitem[Foster and George(1994)]{FosterG94}
D.~Foster and E.~George.
\newblock The risk inflation criterion for multiple regression.
\newblock \emph{Annals of Statistics}, 22:\penalty0 1947--1975, 1994.

\bibitem[Gr{\"u}nwald(2007)]{grunwald2007}
P.~D. Gr{\"u}nwald.
\newblock \emph{The Minimum Description Length Principle}.
\newblock The MIT Press, 2007.

\bibitem[Hansen and Yu(2001)]{HansenY01}
M.~Hansen and B.~Yu.
\newblock Model selection and the principle of minimum description length.
\newblock \emph{Journal of the American Statistical Association}, 96\penalty0
  (454):\penalty0 746--774, 2001.

\bibitem[Hansen and Yu(2002)]{HansenY02}
M.~Hansen and B.~Yu.
\newblock Minimum description length model selection criteria for generalized
  linear models.
\newblock In \emph{Science and Statistics: Festschrift for Terry Speed},
  volume~40 of \emph{IMS Lecture Notes -- Monograph Series}. Institute for
  Mathematical Statistics, Hayward, CA, 2002.

\bibitem[Herbster and Warmuth(1998)]{HerbsterWarmuth1998}
M.~Herbster and M.~K. Warmuth.
\newblock Tracking the best expert.
\newblock \emph{Machine Learning}, 32:\penalty0 151--178, 1998.

\bibitem[Kass and Raftery(1995)]{kass1995}
R.~E. Kass and A.~E. Raftery.
\newblock {B}ayes factors.
\newblock \emph{Journal of the American Statistical Association}, 90\penalty0
  (430):\penalty0 773--795, 1995.

\bibitem[Li(1987)]{Li87}
K.~Li.
\newblock Asymptotic optimality for {$C_p$}, {$C_L$}, cross-validation and
  generalized cross-validation: Discrete index set.
\newblock \emph{The Annals of Statistics}, 15:\penalty0 958--975, 1987.

\bibitem[Li and Yu(2000)]{LiY00}
L.~Li and B.~Yu.
\newblock Iterated logarithmic expansions of the pathwise code lengths for
  exponential families.
\newblock \emph{IEEE Transactions on Information Theory}, 46\penalty0
  (7):\penalty0 2683--2689, 2000.

\bibitem[Monteleoni and Jaakkola(2004)]{MonteleoniJ04}
C.~Monteleoni and T.~Jaakkola.
\newblock Online learning of non-stationary sequences.
\newblock In \emph{Advances in Neural Information Processing Systems},
  volume~16, Cambridge, MA, 2004. MIT Press.

\bibitem[Parthasarathy(1967)]{parthasarathy1967}
K.~R. Parthasarathy.
\newblock \emph{Probability Measures on Metric Spaces}.
\newblock Probability and Mathematical Statistics. Academic Press, 1967.

\bibitem[Poland and Hutter(2005)]{PolandH05}
J.~Poland and M.~Hutter.
\newblock Asymptotics of discrete {MDL} for online prediction.
\newblock \emph{IEEE Transactions on Information Theory}, 51\penalty0
  (11):\penalty0 3780--3795, 2005.

\bibitem[Rissanen(1983)]{Rissanen83}
J.~Rissanen.
\newblock A universal prior for integers and estimation by minimum description
  length.
\newblock \emph{The Annals of Statistics}, 11:\penalty0 416--431, 1983.

\bibitem[Rissanen(1984)]{rissanen1984}
J.~Rissanen.
\newblock Universal coding, information, prediction, and estimation.
\newblock \emph{IEEE Transactions on Information Theory}, IT-30\penalty0
  (4):\penalty0 629--636, 1984.

\bibitem[Rissanen et~al.(1992)Rissanen, Speed, and Yu]{rissanen1992}
J.~Rissanen, T.~P. Speed, and B.~Yu.
\newblock Density estimation by stochastic complexity.
\newblock \emph{IEEE Transactions on Information Theory}, 38\penalty0
  (2):\penalty0 315--323, 1992.

\bibitem[Ruud(1995)]{Ruud95}
P.~Ruud.
\newblock The geometry of the {G}auss-{M}arkov theorem, 1995.
\newblock Available at http://elsa.berkeley.edu/GMTheorem.

\bibitem[Schwarz(1978)]{schwarz1978}
G.~Schwarz.
\newblock Estimating the dimension of a model.
\newblock \emph{The Annals of Statistics}, 6\penalty0 (2):\penalty0 461--464,
  1978.

\bibitem[Sheu(1990)]{Sheu90}
C.~Sheu.
\newblock \emph{Density Estimation with Kullback-Leibler Loss}.
\newblock PhD thesis, University of Illinois at Urbana-Champaign, 1990.

\bibitem[Shibata(1983)]{Shibata83}
R.~Shibata.
\newblock Asymptotic mean efficiency of a selection of regression variables.
\newblock \emph{Annals of the Institute of Statical Mathematics}, 35:\penalty0
  415--423, 1983.

\bibitem[Sober(2004)]{Sober04}
E.~Sober.
\newblock The contest between parsimony and likelihood.
\newblock \emph{Systematic Biology}, 4:\penalty0 644--653, 2004.

\bibitem[Speed and Yu(1993)]{SpeedY93}
T.~Speed and B.~Yu.
\newblock Model selection and prediction: {N}ormal regression.
\newblock \emph{Annals of the Institute of Statical Mathematics}, 45\penalty0
  (1):\penalty0 35--54, 1993.

\bibitem[Stone(1982)]{Stone82}
C.~Stone.
\newblock Optimal global rates of convergence for nonparametric regression.
\newblock \emph{Annals of Statistics}, \penalty0 (10):\penalty0 1040--1053,
  1982.

\bibitem[Stone(1977)]{stone1977}
M.~Stone.
\newblock An asymptotic equivalence of choice of model by cross-validation and
  {A}kaike's criterion.
\newblock \emph{Journal of the Royal Statistical Society B}, 39:\penalty0
  44--47, 1977.

\bibitem[Tibshirani(1996)]{Tibshirani96}
R.~Tibshirani.
\newblock Regression shrinkage and selection via the lasso.
\newblock \emph{Journal of the Royal Statistical Society, Series B},
  58:\penalty0 267--288, 1996.

\bibitem[van Erven(2006)]{vanerven2006}
T.~van Erven.
\newblock The momentum problem in {MDL} and {B}ayesian prediction.
\newblock Master's thesis, University of Amsterdam, Amsterdam, The Netherlands,
  2006.
\newblock Available from http://www.cwi.nl/{$\sim$}erven/publications/.

\bibitem[van Erven et~al.(2007)van Erven, Gr\"unwald, and de~Rooij]{ErvenGR07}
T.~van Erven, P.~Gr\"unwald, and S.~de~Rooij.
\newblock Catching up faster in bayesian model selection and model averaging.
\newblock In \emph{Advances in Neural Information Processing Systems},
  volume~20, 2007.

\bibitem[Volf and Willems(1998)]{volfwillems1998}
P.~Volf and F.~Willems.
\newblock Switching between two universal source coding algorithms.
\newblock In \emph{Proceedings of the Data Compression Conference, Snowbird,
  Utah}, pages 491--500, 1998.

\bibitem[Vovk(1999)]{Vovk1999}
V.~Vovk.
\newblock Derandomizing stochastic prediction strategies.
\newblock \emph{Machine Learning}, 35:\penalty0 247--282, 1999.

\bibitem[Wong and Clarke(2004)]{wong2004}
H.~Wong and B.~Clarke.
\newblock Improvement over {B}ayes prediction in small samples in the presence
  of model uncertainty.
\newblock \emph{The Canadian Journal of Statistics}, 32\penalty0 (3):\penalty0
  269--283, 2004.

\bibitem[Yang(2000)]{Yang00}
Y.~Yang.
\newblock Mixing strategies for density estimation.
\newblock \emph{The Annals of Statistics}, 28\penalty0 (1):\penalty0 75--87,
  2000.

\bibitem[Yang(2005{\natexlab{a}})]{Yang05a}
Y.~Yang.
\newblock Can the strengths of {AIC} and {BIC} be shared?
\newblock \emph{Biometrica}, 92\penalty0 (4):\penalty0 937--950,
  2005{\natexlab{a}}.

\bibitem[Yang(2005{\natexlab{b}})]{Yang05b}
Y.~Yang.
\newblock Consistency of cross-validation for comparing regression procedures.
\newblock Submitted for publication, 2005{\natexlab{b}}.

\bibitem[Yang(1999)]{Yang99}
Y.~Yang.
\newblock Model selection for nonparametric regression.
\newblock \emph{Statistica Sinica}, 9:\penalty0 475--499, 1999.

\bibitem[Yang and Barron(1998)]{YangB98}
Y.~Yang and A.~Barron.
\newblock An asymptotic property of model selection criteria.
\newblock \emph{IEEE Transactions on Information Theory}, 44:\penalty0
  117--133, 1998.

\bibitem[Yang and Barron(1999)]{YangB99}
Y.~Yang and A.~Barron.
\newblock Information-theoretic determination of minimax rates of convergence.
\newblock \emph{The Annals of Statistics}, 27:\penalty0 1564--1599, 1999.

\bibitem[Yu and Speed(1992)]{yu1992}
B.~Yu and T.~P. Speed.
\newblock Data compression and histograms.
\newblock \emph{Probability Theory and Related Fields}, 92:\penalty0 195--229,
  1992.

\end{thebibliography}
\tableofcontents
\end{document}